	%!TEX program = pdflatex
	% Full chain: pdflatex -> biber/bibtex -> pdflatex -> pdflatex
	%\documentclass[lang=cn,a4paper,fontset=mac,citestyle=gb7714-2015, bibstyle=gb7714-2015,bibend=bibtex]{elegantpaper}
	\documentclass[12pt,oneside,reqno]{amsart}
	\usepackage{bbm}

        \usepackage{cite} % 这个包用于处理引用和排序
        \newcommand{\Noopsort}[1]{} %在导言区加入此定义可以使得特殊外文人名按指定的英文排序比如Øksendal实际被理解为Øksendal
 
	\usepackage{amsmath}
	\usepackage{graphicx}
	\usepackage{mathrsfs}
	\usepackage{stmaryrd}
	\usepackage{enumerate,amsmath,amssymb,amsthm}
	%-----------------------------------------------------
	\numberwithin{equation}{section}%公式按章节编号
	\usepackage{amsthm}
	\usepackage{amssymb,amsfonts,mathrsfs}
	\usepackage{enumerate}
	\usepackage{color}
	\usepackage{xcolor}
	\usepackage{cite}
        %背景颜色为护眼绿色
	%\pagecolor[rgb]{0.9, 0.99, 0.9}

	%一级标题居左
%	\usepackage{chngcntr} % 加载chngcntr宏包
	%\usepackage{titlesec}
%%\counterwithin{section}{chapter}
%	\titleformat{\section}
%	{\normalfont\bfseries}
%	{}
%	{0pt}
%	{}
%	\titlespacing{\section}
%	{0pt}
%	{0.5\baselineskip}
%	{0.3\baselineskip}
	
	%---文内跳转超链接
	\usepackage{hyperref}
     \hypersetup{colorlinks=true,
				linkcolor=blue,
				anchorcolor=blue,
				citecolor=blue}
	%\usepackage[colorlinks=true,linkcolor=blue,urlcolor=blue,citecolor=red]{hyperref}
	%-------------------------------------------------------
	\pagestyle{plain}
	\textwidth=160 mm
	\textheight=240 mm
	\oddsidemargin=0mm
	\topmargin=-12mm
	%\numberwithin{equation}{section}
	\setlength{\parskip}{0.1 in} %设置段落间距
	%\linespread{1}
	%--------------------------------------------------------------------
	\theoremstyle{definition}

	\theoremstyle{remark}
	%\newtheorem*{rem}{Remark}
	%\newtheorem*{note}{Note}

	%-----------------------------------------------------------------
	
	\newtheorem{theorem}{Theorem}[section]
	\newtheorem{lemma}[theorem]{Lemma}
	\newtheorem{remark}[theorem]{Remark}
	\newtheorem{definition}[theorem]{Definition}
	\newtheorem{proposition}[theorem]{Proposition}

	%--------------------------------------------------

	%要使生成的二级标题紧接的章节的文字自动换行，可以通过修改\subsection命令的定义来实现。具体方法如下：
      \makeatletter
		\renewcommand{\subsection}{\@startsection
			{subsection} % name
			{2} % level
			{0mm} % indent
			{0.5\baselineskip} % befores
			{0.3\baselineskip} % afterskip
			{\normalfont\normalsize\raggedright}} % style
		\makeatother
		
%		\makeatletter
%		\renewcommand{\section}{\@startsection
%			{section} % name
%			{1} % level
%			{0mm} % indent
%			{0.5\baselineskip} % beforeskip
%			{0.3\baselineskip} % afterskip
%			{\font\normalsize\bfseries\raggedright}} % style
		
%		\renewcommand{\subsection}{\@startsection
%			{subsection} % name
%			{2} % level
%			{0mm} % indent
%			{0.5\baselineskip} % beforeskip
%			{0.3\baselineskip} % afterskip
%			{\font\normalsize\raggedright}} % style
%		\makeatother

	%%定义自己的惯用符号
	\def\E{\mathbb{E}}
	\def\P{\mathbb{P}}
	\def\R{\mathbb{R}}
	\def\B{\mathcal{B}}
	\def\d{\mathrm{d}}
	\def\N{\mathbb{N}}
	\def\C{\mathbb{C}}

	\def\1{\mathbbm{1}}
	\def\e{\mathrm{e}}
	\def\ito{It$\hat{\mathrm{o}}$}
	\def\gron{Gr$\ddot{\mathrm {o}}$nwall}
        
	\def\hold{H$ \ddot{\mathrm{o}} $lder}

        \def\geq{\geqslant}
        \def\leq{\leqslant}
	%-------------------------------------------------
%允许公式跨页,数字可选1,2,3,4,数越大跨页强度越大
%但注意, 需要在begin{align}环境下使用
\allowdisplaybreaks[4]

\begin{document}
%\title{Stochastic Hamiltonian Type Jump Diffusion Systems with Countable Regimes: Strong Feller Property and Exponential Ergodicity$^{\star}$}
\title{The Large Deviation Principle for Stochastic Flow of Stochastic Slow-Fast Motions}
%作者的顺序为字母序，并且用逗号和and连接
%\author{Zuozheng Zhang, Yafei Zhai and Fubao Xi$^{*}$}
\author{Mingkun Ye$^{\text{a},*}$, and Zuozheng Zhang$^{\text{b}}$}
%\author{}
%项目信息，通讯作者以及邮箱使用以下命令
%	\thanks{$^{\star}$  Supported in part by the National Natural Science Foundation of China under Grant No. 12071031.}
\thanks{$^{*}$ Corresponding author.}
\thanks{E-mail addresses: mingkunye@foxmail.com(M. Ye), zuozhengzhang@bit.edu.cn(Z. Zhang)}	
%文章标题下的机构信息使用该命令
	\dedicatory{
        $^{\rm{a}}$School of Mathematics,
		Sun Yat-sen University, \\
		Guangzhou, 510275,  P.R.China\\
		$^{\rm{b}}$School of Mathematics and Statistics,
		Beijing Institute of Technology, \\
		Beijing, 100081,  P.R.China}
	
\begin{abstract}
In this paper, we consider a kind of fully-coupled slow-fast motion, in which the slow variable satisfies the non-Lipschitz condition.
  %of the coefficient components with the slow variable,
  We prove that the stochastic flow of the slow variable 
  %with a fixed fast variable's  initial value 
  exists and moreover, satisfies the large deviation principle. The argument  is mainly based on Khasminskii’s averaging principle\cite{Khasminskii}, the variational representation of the exponential functional
  %of finite dimensional
  of the Brownian motion \cite{DupuisAOP1998}, and the weak convergence framework proposed by Budhiraja and Dupuis \cite{BUDHIRAJA2000}.\\
% 摘要后的 AMS数学分类
{\it AMS Mathematics Subject Classification :} 60F10; 34A12.	
\\
{\it Keywords: }Slow-Fast motion; Stochastic flow; Large deviation; Laplace principle; Variational presentation\\
\end{abstract}
\maketitle
\rm
	
\section{Introduction}
Consider the following stochastic slow-fast motion
	\begin{equation}\label{EQ:SDE:01}
		\left\{\begin{array}{l}
			\d X_{t}^{(\varepsilon, \delta)}=f_{1}(X_{t}^{(\varepsilon, \delta)}, Y_{t}^{(\varepsilon, \delta)}) \d t+\sqrt{\varepsilon} \sigma_{1}(X_{t}^{(\varepsilon, \delta)}) \d W^1_{t},\quad X_0^{(\varepsilon,\delta)}=x,\\
			\delta \d Y_{t}^{(\varepsilon, \delta)}=f_{2}(X_{t}^{(\varepsilon, \delta)}, Y_{t}^{(\varepsilon, \delta)}) \d t+\sqrt{\delta} \sigma_{2}(X_{t}^{(\varepsilon, \delta)}, Y_{t}^{(\varepsilon, \delta)}) \d W^2_{t},\quad Y_0^{(\varepsilon,\delta)}=y,
		\end{array}\right.
	\end{equation}
	where
	\( f_{i}: \mathbb{R}^{{d}} \times \mathbb{R}^{{d}} \rightarrow \mathbb{R}^{{d}},i=1,2\),  \(\sigma_{1}: \mathbb{R}^{{d}} \rightarrow \mathbb{R}^{d}\otimes\mathbb{R}^{d}\) and \(\sigma_{2}: \mathbb{R}^{{d}} \times \mathbb{R}^{{d}} \rightarrow \mathbb{R}^{d}\otimes \mathbb{R}^{d} \),
	both are measurable functions. $W:=(W^1,W^2)$ is
 a $2d$-dimensional standard Brownian motions on a probability  space  $(\Omega,\mathscr{F}, \P)$. %i.e., $\Omega$ is the space of all continuous $\R^{2d}$-valued functions on $\R^+$ with $\omega_0 = 0$,  
 %$\mathscr{F}$ is the $\sigma$-algebra generated by the compact convergence topology on $\Omega$, and $\P$ is the corresponding Wiener measure.
 Let $(\mathscr{F}_t)_{t\geq 0}$ be the corresponding natural filtration with respect to $(W_1,W_2)$. The two small positive parameters  $ \varepsilon  $ and  $ \delta  $ satisfy  $ \delta=o (\varepsilon) $, which are used to describe the time scale separation between the slow variable $ X^ {( \varepsilon,  \delta)}  $ and the fast variable $ Y^ {( \varepsilon,  \delta)} $. 
 %The initial value $(X_ {0} ^ {(\varepsilon,  \delta)}, Y_{0}^{(\varepsilon, \delta)})$  of the system \eqref{EQ:SDE:01} is given by  $ ( x, y)   \in  \mathbb {R} ^ {d}  \times  \mathbb {R}^{d}$.
 
 The large deviation principle (LDP to be short) has been extensively studied in many stochastic dynamical systems.  A powerful approach for studying the LDP is the well-known weak convergence method (see, for example, Dupuis and Ellis \cite{DUPUIS2011}, Veretennikov \cite{VERETENNIKOV2000}, Budhiraja and Dupuis \cite{BUDHIRAJA2000}). 
 %proposed by  Budhiraja and Dupuis in \cite{BUDHIRAJA2000}.
 %(see, for example, Budhiraja and Dupuis [10], Matoussi, Sabbagh and Zhang [33], Ren and Zhang [37]). 
 %Various forms of the LDP for multi-scale processes have been proposed. 
Moreover, this approach has been widely applied in various multi-scale diffusion processes.
 %there are various works which prove the LDP  
 %To be specific, there has been extensively study on large deviations for multi-scale diffusion processes using the weak convergence approach, 
%see, for example,  Freidlin \cite{WENTZELL1998}, 
%Dupuis and Ellis \cite{DUPUIS2011}, 
Dupuis and Spiliopoulos \cite{DUPUIS-SPA-2012} build the LDP for locally periodic SDEs with small noise and fast oscillating coefficients.
%Puhalskii \cite{ANATOLII2016}, 
Sun et al. \cite{Sun2021} establish the LDP for SPDEs with slow and fast time-scales, where the slow component is a one-dimensional stochastic Burgers equation with small noise and the fast component is a stochastic reaction-diffusion equation.
 %As for multi-scale diffusion, Dupuis and Spiliopoulos\cite{DUPUIS2012} proved large deviation of a kind of  multi-scale diffusion by using the weak convergence methods and the notion of variable pair.
 %Then this method have been used frequently in claiming large deviation principle of specific multi-scale process, 
 Hong et al. \cite{HONG2021SPDE,hongLargeDeviationPrinciple2024} treat with  multi-scale stochastic partial differential equations (SPDEs to be short) with locally monotone condition and fully cocal monotone condition respectively. Hong et al. \cite{HONG2021} investigate the LDP of a kind of McKean-Vlasov SPDE slow-fast motion.
Hu et al.  \cite{HU2019} deal with the systems of slow-fast stochastic reaction-diffusion equation. Li and Liu \cite{LIGE2022} establish the LDP for the synchronized system using the similar argument like \cite{DUPUIS-SPA-2012}. 
%As for jump diffusion processes, Budhiraja et al. \cite{BUDHIRAJA2013} developed an approach based on control theory and variational representation, and these results were extended to an infinite-dimensional version (i.e., driven by a Poisson random measure) by Budhiraja et al. \cite{BUDHIRAJA2008}. 
Inahama et al. \cite{Xuyong2023Arxiv} provide the corresponding  LDP for slow-fast motions driven by mixed fractional Brownian motion.
%It is worth mentioning that Kumar and Popovic \cite{KUMAR2017} established a framework for proving large deviations applicable to various multiscale processes based on argument in Feng and Kumar \cite{Feng2012}, using the viscosity solution of the Cauchy problem for integro-differential equations.
As for the stochastic flow of SDEs, Ren and Zhang \cite{REN2003,REN2005,REN2006}, Zhang\cite{ZHANG2005,ZHANG2010}, present rich results about stochastic flows of SDEs under non-Lipshictz coefficients. Moreover, Ren and Zhang \cite{REN2005Freidlin,REN2008Freidlin} prove the LDP of stochastic flow about SDEs and SPDEs respectively.

Based on these extensive research results about the LDP and the stochastic flow, we consider the LDP of the stochastic flow about a kind of stochastic slow-fast motion defined by \eqref{EQ:SDE:01} based on the averaging principle and the weak convergence method. To be concrete,  we treat the slow variable $ X^{( \varepsilon,  \delta)}  $  as random variables taking value in the  specific function space $ C([0,T];C(\R^d;\R^d))  $which is identified as $ C([0,T]\times \R^d;\R^d).$ In other words, the slow variable $X_{\cdot}^{(\varepsilon,\delta)}(\cdot,y_0)$ can be regarded as a stochastic field with index  set $ [0,T]\times \R^d $.  The purpose of this paper is to prove that under some suitable conditions, the stochastic flow of slow variable  in \eqref {EQ:SDE:01} with  fixed fast variable initial value $y_ 0$, 
	\[ 
	[0,T]\times \R^d \ni (t,x) \mapsto X_ {t}^{( \varepsilon,  \delta)} (x, y_0)\in \R^d
	 \]
	 exists, and further satisfies the LDP, i.e., there is  a rate function $ I $ such that for any set $ A\in\mathcal{B}(C([0,T]\times \R^d;\R^d)) ,$
	 \[ 
	\begin{split}
	     -\inf_{\varphi\in A^{\circ}}I(\varphi)& \leq \liminf_{\varepsilon\to 0}\varepsilon \log \P(X_ {t} ^ {( \varepsilon,  \delta)} (x, y_0)\in A) \\
     &  \leq \limsup_{\varepsilon\to 0}\varepsilon \log \P(X_ {t} ^ {( \varepsilon,  \delta)} (x, y_0)\in A) \leq-\inf_{\varphi\in\bar{A}}I(\varphi),
	\end{split}
	 \]
	 where $A^{\circ}$, $\bar{A} $ represent the interior and closure of set $A $ respectively.

     This paper is organized as follows. In Section \ref{SEC02}, we prepare some notions and necessary assumptions, and propose the main result of this article. In Section \ref{SEC03}, we prove some auxiliary lemmas, including detailed moment estimations for the frozen process and the controlled system.  In Section \ref{SEC04}, we give the tightness analysis of slow variables. Based on this result, we further prove the Laplace principle which implies the LDP. 
     
    Throughout the paper,  $C$ with or without indexes will denote different constants (depending on the indexes) whose values are not important.
	\section{Preliminaries and Main Results}\label{SEC02}
 Given $T,N>0,$ we denote by
	\[ 
	\mathbb{C}_T:=C([0,T]\times \R^d;\R^d),~\mathbb{C}_T^N:=C([0,T]\times D_N;\R^d),
	\] 
	where $ D_N=\{x\in \R^d:|x|\leq N\}. $
	One can prove that $ \mathbb{C}_T^N $ is a Banach space, its  uniform norm is
	\[ 
	\|f\|_{\mathbb{C}_T^N}:=\sup _{(t, x) \in[0, T] \times D_N}|f(t, x)|,
	\]
	and $ \mathbb{C}_T $ is a Polish space that equiped with semi-norm
	\[ 
	\|f\|_{\mathbb{C}_T}:=\sum_{N=1}^{\infty} 2^{-N}(\|f\|_{\mathbb{C}_T^N} \wedge 1).
	\]
	Let	$ \mathcal{B}(\mathbb{C}_T) $ denote the $ \sigma $-algebra generated by the topology induced by the semi-norm. Define $ \mathbb{H} $ by
	\[ 
	\mathbb{H}:= \left\{h:[0,T]\to\R^{2d};  \int_{0}^{T}|\dot{h}(s)|^2\d s<\infty\right\},
	\]
 where $\dot{h}(s)$ denote the gradient of $h(s)(x)$ with respect to $x$. It is well known that $ \mathbb{H} $ is a Hilbert space with norm $ \|h\|_{\mathbb{H}}^2 := \int_{0}^{T}|\dot{h}(s)|^2\d s $, which is called Cameron-Martin Space. For any  \( N>0 \), define 
	$
	B_{N}:=\left\{h \in \mathbb{H}:\|h\|_{\mathbb{H}} \leq N\right\},
	$
	which can be  metrizable as a compact Polish space under the weak topology in $ \mathbb{H} $.  Set
	\[
	\mathcal{A}:=\left\{h:[0,T]\times \Omega \rightarrow \mathbb{R}^{2d}; h \text{ is } \mathscr{F}_{t} \text {-predictable process and }\mathbb{E} \int_{0}^{T}|\dot{h}(s)|^{2} \d s<\infty\right\}
	\]
	and 
	\[
	\mathcal{A}_{N}:=\left\{h \in \mathcal{A}: h \in B_{N} \text {, a.s.}\right\} .
	\]
	%Here  the definition of convergence in distribution of  $ B_{N} $-valued random variables, we always consider it under weak topology.
 
	Next, we recall the definition of the LDP. Suppose that \( E \) is a Polish space and \( \mathcal{B}(E) \) is therein a Borel \( \sigma \)-algebra. 
 %Let \( \left\{X^{\varepsilon}, \varepsilon>0\right\} \)  be a family of  \( E \)-valued random variables on  probability space \( (\Omega, \mathscr{F}, \P) \).
	
	\begin{definition}
		A function $ I $ mapping $ E  $ to $ [0,\infty] $ is called a rate function if for each $ a < \infty $, the level set $\{x \in E : I(x)\leq a\} $ is compact set. For any $ A\in \mathcal{B}(E), $ we set that $ I(A):=\inf_{x\in A}I(x). $
	\end{definition}
	
	\begin{definition}[LDP]
		If $ I $ is a rate function on Polish space $ E $, $ \{X^\varepsilon\} $ is a famliy of  $ E $-valued random variables on $ (\Omega,\mathscr{F},\P) $, we call that  $ \{X^\varepsilon\} $ satisfies the LDP on $ E $ with rate function $ I $,  if the following two conditions hold:\\
		(i) Large deviation upper bound. For any closed set $ F\in \B(E), $ 
		\[ 
		\limsup_{\varepsilon \to 0}\varepsilon \log \P(X^\varepsilon\in F) \leq -I(F).
		\]
		(ii) Large deviation lower bound . For any open set $ G\in \B(E), $
		\[ 
		\liminf_{\varepsilon \to 0}\varepsilon \log \P(X^\varepsilon\in G) \geq -I(G).
		\]
	\end{definition}
	Usually, when dealing with concrete problems, we are more concerned with the exponential estimation of functions rather than the indicative functions of open or closed sets, which has led to the study of the Laplace principle. The well-known result is that the LDP is equivalent to the Laplace principle. So we only need to prove that the target solution process satisfies the corresponding Laplace principle. The definition of the Laplace principle is as follows.
	\begin{definition}[Laplace principle]
		Suppose that $ E  $ is a Polish space, the set $ \{X^{\varepsilon}\} $ is a famliy of  $ E $-valued random variable on probability space $ (\Omega,\mathscr{F},\P) $, and $ I $ is a rate function on $ E $. We say that $ \{X^{\varepsilon}\} $ satisfies the Laplace principle on $ E $ with the rate function $ I  $,  if for all real bounded continuous functions $ f:E\rightarrow \R, $ 
		\[ 
		\lim _{\varepsilon \rightarrow 0}-\varepsilon \log \mathbb{E}\left[\exp \left\{-\frac{f\left(X^{\varepsilon}\right)}{\varepsilon}\right\}\right]=\inf _{x \in E}[I(x)+f(x)].
		\]
	\end{definition}

%	The following is the general assumptions for proving the Laplace principle, 
%	
%	\noindent\( (\mathscr{H})\): Suppose that  \( Z^{\varepsilon}: \mathbb{H} \rightarrow E \) is a famliy of measurable mappings. and there exists a 
%	measurable mapping \( Z: \mathbb{H} \rightarrow E \), such that for any  \( N>0 \),  \( \left\{h^{\varepsilon}\right\} \subset \mathcal{A}_{N} \) (as  \( B_{N} \)-valued random varibales)  converge in distribution to \( h \in \mathcal{A}_{N} \), then there is subsequence  \( \varepsilon_{k} \) such that  \( Z^{\varepsilon_{k}}(\cdot+h^{\varepsilon_{k}}(\cdot) / \sqrt{\varepsilon_{k}}) \)  converge in distribution  to \( Z(h) \) in \( E \)  .
%	
%	
%	\begin{theorem}
%		If \( Z^{\varepsilon}: \mathbb{H} \rightarrow E \) satisfies the above assumption \( (\mathscr{H}) \), then  \( \left\{Z^{\varepsilon}, \varepsilon>0\right\} \)  satisfy the   Laplace principle on \( E \).  Moreover ,  rate function therein is defined by
%		\[
%		I(f):=\frac{1}{2} \inf _{\{h \in \mathbb{H}: Z(h)=f\}}\|h\|_{\mathbb{H}}^{2} \cdot
%		\]
%	\end{theorem}

We prepare some lemmas which will be needed in the sequel. The first one is the following generalization of the Gronwall-Bellman type inequality which comes from \cite{bihari1956generalization}. 

\begin{lemma}[Bihari's inequality]\label{LEM:0801:02}
     Let \( \rho: \mathbb{R}_{+} \mapsto \mathbb{R}_{+} \) be a continuous and non-decreasing function. If \( f(t), q(t) \) are two strictly positive functions on \( \mathbb{R}_{+} \) such that
\[
f(t) \leqslant f(0)+\int_{0}^{t} q(s) \rho(f(s)) \d s, \quad t \geqslant 0,
\]
then
\[
f(t) \leqslant g^{-1}\left(g(f(0))+\int_{0}^{t} q(s) \d s\right),
\]
where \( g(x):=\int_{x_{0}}^{x} \frac{1}{\rho(y)} \d y \) is well defined for some \( x_{0}>0 \).
\end{lemma}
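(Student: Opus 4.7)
The plan is to use the classical auxiliary-function trick: introduce the right-hand side of the integral inequality as a new function, show that it majorises $f$ and is differentiable, and then convert the differential inequality it satisfies into an integrated form via the change of variables induced by $g$.

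Concretely, I would first set
\[
F(t) := f(0) + \int_{0}^{t} q(s)\rho(f(s))\,\d s,
\]
and observe that $F$ is absolutely continuous, non-decreasing (since $q,\rho>0$), with $F(0)=f(0)$ and $f(t)\leq F(t)$ by hypothesis. Differentiating and using the fact that $\rho$ is non-decreasing together with $f\leq F$, I would get
\[
F'(t) = q(t)\rho(f(t)) \leq q(t)\rho(F(t)),
\]
so $F$ itself satisfies a differential inequality in which the nonlinearity is evaluated at $F$.

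Next I would divide by $\rho(F(t))>0$ (which is permissible since $F\geq f(0)>0$ and $\rho>0$) to obtain
\[
\frac{F'(t)}{\rho(F(t))} \leq q(t).
\]
Integrating from $0$ to $t$ and performing the change of variables $u=F(s)$ identifies the left-hand side with $g(F(t))-g(F(0))$, where $g(x)=\int_{x_0}^{x}\rho(y)^{-1}\,\d y$ is well-defined and strictly increasing on $(0,\infty)$. This yields
\[
g(F(t)) \leq g(f(0)) + \int_{0}^{t} q(s)\,\d s.
\]
Since $g$ is strictly increasing, $g^{-1}$ exists and is also strictly increasing on its range, so applying $g^{-1}$ and then using $f(t)\leq F(t)$ gives the claimed bound.

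The only mildly delicate point is to make sure the change of variables and the application of $g^{-1}$ are legitimate, i.e.\ that $g(f(0))+\int_{0}^{t}q(s)\,\d s$ lies in the range of $g$. This is handled by restricting to the interval of $t$ on which the right-hand side stays in the domain where $g^{-1}$ is defined (beyond that interval the bound is vacuous), and by noting that $\rho>0$ forces $g$ to be a genuine $C^1$-diffeomorphism onto its image. Everything else is routine manipulation; the key conceptual step is the passage from the integral inequality on $f$ to a differential inequality on the majorant $F$, so that the separable-equation comparison becomes available.
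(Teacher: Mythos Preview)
Your argument is correct and is precisely the classical proof of Bihari's inequality. Note, however, that the paper does not actually prove this lemma: it is stated with a direct citation to Bihari's original 1956 article and no proof is given in the text, so there is nothing further to compare.
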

In what follows, we give the following concave function,
 %to describe the continuity of slow varibale's coefficients, 
	\[
	\rho_{\eta}(x):=\left\{\begin{array}{ll}
		x \log x^{-1}, & x \leq \eta, \\
		\eta \log \eta^{-1}+(\log \eta^{-1}-1)(x-\eta), & x>\eta,
	\end{array}\right.
	\]
	where \( 0<\eta<1 / \e \). The undermentioned Bihari-type inequalities can be found in  Zhang \cite[Lemma 2.1]{ZHANG2010}, Ren and Zhang \cite[Lemma 1.1]{REN2005Freidlin}.
	\begin{lemma}[Bihari-type inequality]\label{THM02}
 (1) Let \( f(t) \) be a strictly positive function on \( \mathbb{R}_{+}\) satisfying for some \( \delta>0, \)
\[
f(t) \leq f(0)+\delta \int_{0}^{t} \rho_{\eta}(f(s)) \mathrm{d} s, \quad t \geq 0,
\]
then for any \( T>0 \), there is a constant \( C=C(T, \delta, \eta) \) such that
\[
f(t) \leq C\left(f(0)+(f(0))^{\exp \{-\delta T\}}\right), \quad t \in[0, T] .
\]
(2) Let \( f(t), q(t) \) be two strictly positive functions on \( \mathbb{R}^{+} \) satisfying \( f(0)<\eta \) and
		\[
		f(t) \leq f(0)+\int_{0}^{t} q(s) \rho_{\eta}(f(s)) \d s, \quad t \geq 0,
		\]
then
	\[
		f(t) \leq(f(0))^{\exp \left\{-\int_{0}^{t} q(s) \d s\right\}}.
		\]

	\end{lemma}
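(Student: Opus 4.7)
My plan is to derive both inequalities from the generalized Bihari inequality (Lemma 2.2 above) applied to the specific concave modulus $\rho=\rho_\eta$, the crucial step being an explicit computation of the auxiliary function $g$ and its inverse on the regime where $\rho_\eta(y)=y\log y^{-1}$.

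First I would compute $g(x)=\int_{x_0}^{x}\rho_\eta(y)^{-1}\,\d y$ on $(0,\eta)$ by the substitution $u=\log y^{-1}$, which converts the integrand into $-\d u/u$. Choosing $x_0=\eta$ gives $g(x)=\log\log\eta^{-1}-\log\log x^{-1}$ and an elementary inversion produces $g^{-1}(z)=\exp\bigl(-e^{-z}\log\eta^{-1}\bigr)$. A short manipulation then shows that for $0<a<\eta$ and $Q\geq 0$,
\[
g^{-1}\bigl(g(a)+Q\bigr)=a^{\exp(-Q)}.
\]
Inserting this identity with $a=f(0)$ and $Q=\int_0^t q(s)\,\d s$ into Bihari's bound $f(t)\leq g^{-1}\bigl(g(f(0))+\int_0^t q(s)\,\d s\bigr)$ yields part (2) directly.

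For part (1), the size of $f(0)$ is unrestricted, so I would split into two cases. In the range $f(0)<\eta$, applying part (2) with $q\equiv\delta$ gives $f(t)\leq f(0)^{\exp(-\delta T)}$, which is trivially dominated by $f(0)+f(0)^{\exp(-\delta T)}$. If instead $f(0)\geq\eta$, I would use that $\rho_\eta$ is affine on $[\eta,\infty)$ and hence globally satisfies $\rho_\eta(x)\leq C_\eta(1+x)$; this reduces the hypothesis to the classical linear inequality $f(t)\leq f(0)+C_\eta\delta\int_0^t(1+f(s))\,\d s$, and the standard Gronwall lemma yields $f(t)\leq e^{C_\eta\delta T}(f(0)+C_\eta\delta T)$. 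Since $f(0)\geq\eta$ forces $f(0)^{\exp(-\delta T)}\geq\eta^{\exp(-\delta T)}>0$, this bound can be absorbed into $C\bigl(f(0)+f(0)^{\exp(-\delta T)}\bigr)$ for some $C=C(T,\delta,\eta)$.

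The main obstacle is bookkeeping rather than conceptual: verifying that the composition $g^{-1}\circ g$ is used on the correct branch so that the closed form $a^{\exp(-Q)}$ applies, and keeping track of signs in the double exponential. The piecewise definition of $\rho_\eta$ also requires attention so that the two cases of part (1) glue together with consistent constants, but none of this is substantive.
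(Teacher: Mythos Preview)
The paper does not prove this lemma; it is quoted from Zhang and Ren--Zhang without argument. Your approach---applying the general Bihari inequality (Lemma~\ref{LEM:0801:02}) after explicitly computing $g$ and $g^{-1}$ on the regime where $\rho_\eta(y)=y\log y^{-1}$---is the standard one and matches those references.

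One caveat worth recording: the identity $g^{-1}\bigl(g(a)+Q\bigr)=a^{\exp(-Q)}$ is literally valid only while $g(a)+Q\leq g(\eta)=0$, equivalently while $a^{\exp(-Q)}\leq\eta$. For larger $Q$ the inverse $g^{-1}$ must be taken on the linear branch of $\rho_\eta$, and since the tangent line dominates the concave function $y\log y^{-1}$, the true Bihari bound there strictly exceeds $a^{\exp(-Q)}$. Thus part~(2) as stated holds only up to the time the right-hand side reaches $\eta$; this imprecision is already present in the cited sources and is harmless in every application the paper makes, since the lemma is always invoked with $f(0)$ small to conclude that $f(t)$ stays small. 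Your case split for part~(1) is fine; the transition at $f(0)=\eta$ could be written more carefully (in the small-$f(0)$ case the solution may still cross $\eta$ before time $T$, which requires the same linear Gronwall argument after the crossing), but the idea is correct and the constants assemble as you say.
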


	Here we list some simple properties of \( \rho_{\eta}(\cdot) \) that will be used in the sequel. Their proofs can be found in Ren and Zhang \cite{REN2005Freidlin}, Cao and He \cite{Cao2003}, respectively.	
	\begin{lemma}\label{THM03}
		(1) \( \rho_{\eta} \) is decreasing in \( \eta \), i.e., \( \rho_{\eta_{1}} \leq \rho_{\eta_{2}} \)  if ~\( 1>\eta_{1}>\eta_{2} \).
		
		(2)  For any \( p>1 \), we have
		\[
		x^{p} \rho_{\eta}(x) \leq \rho_{\eta}(x^{1+p}) /(1+p).
		\]
	\end{lemma}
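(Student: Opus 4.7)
The plan for part (1) is to exploit the concavity of the smooth function $y \mapsto y \log y^{-1}$ on $(0,1)$ (whose second derivative $-1/y$ is strictly negative), together with the fact that its derivative at $\eta$ equals exactly $\log \eta^{-1} - 1$, which is precisely the slope used in the affine extension defining $\rho_{\eta}$. Thus $\rho_{\eta}$ is the $C^{1}$ concave function obtained by continuing $y \log y^{-1}$ past $\eta$ along its own tangent line at $\eta$. Since a concave function lies weakly below each of its tangent lines, the tangent at the smaller threshold $\eta_{2}$ dominates $y \log y^{-1}$ pointwise and, in particular, dominates the tangent at any larger $\eta_{1}$ on $[\eta_{1},\infty)$. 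I would then split into the three ranges $x \leq \eta_{2}$, $\eta_{2} < x \leq \eta_{1}$, and $x > \eta_{1}$; in each one the desired inequality $\rho_{\eta_{1}}(x) \leq \rho_{\eta_{2}}(x)$ is immediate from these observations.

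For part (2), I would first dispose of the regime $x \leq \eta$. Since the standing hypothesis $\eta < 1/\e < 1$ forces $x^{1+p} \leq x \leq \eta$ as well, both sides are governed by the $y \log y^{-1}$ branch, and a direct computation
\[
x^{p}\rho_{\eta}(x) \;=\; x^{1+p}\log x^{-1} \;=\; \frac{1}{1+p}\, x^{1+p}\log x^{-(1+p)} \;=\; \frac{\rho_{\eta}(x^{1+p})}{1+p}
\]
shows equality actually holds there; the factor $1/(1+p)$ in the statement appears naturally via the chain-rule identity $\log x^{-(1+p)} = (1+p)\log x^{-1}$.

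The remaining regime $x > \eta$ is the main obstacle, since there $\rho_{\eta}(x)$ is the affine piece while $x^{1+p}$ may sit on either side of $\eta$, forcing separate treatment of the two sub-regimes $x^{1+p} \leq \eta$ and $x^{1+p} > \eta$. Here I would introduce the auxiliary function
\[
F(x) \;:=\; \frac{\rho_{\eta}(x^{1+p})}{1+p} - x^{p}\rho_{\eta}(x),
\]
note that $F(\eta) = 0$ by the previous case and that $F'(\eta) = 0$ thanks to the $C^{1}$ matching of the two branches of $\rho_{\eta}$ at $\eta$, and then attempt to sign $F$ on $(\eta,\infty)$ by exploiting the concavity of $\rho_{\eta}$ established in part (1) together with the monotonicity of $y \mapsto \rho_{\eta}(y)/y$. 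The delicate point I expect to wrestle with is that the affine growth of $\rho_{\eta}$ on $(\eta,\infty)$ must be controlled by the logarithmic growth of $\rho_{\eta}(x^{1+p})$ after it is amplified by the exponent $1+p$, and it is the tightness of this balance that makes the case analysis the bottleneck of the argument.
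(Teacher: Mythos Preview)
The paper does not give its own proof; it simply cites Ren--Zhang and Cao--He. Your argument for (1) via the tangent-line characterisation of $\rho_\eta$ is correct and is the standard one.

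For (2), your treatment of the range $x\le\eta$ is correct and in fact yields equality, which is exactly what the cited references prove and what the paper actually uses. Your difficulty with the range $x>\eta$ is not a deficiency of your method: the inequality as stated is \emph{false} there. With your auxiliary function $F(x)=\rho_\eta(x^{1+p})/(1+p)-x^{p}\rho_\eta(x)$ you correctly find $F(\eta)=F'(\eta)=0$; a further differentiation (using the affine branch of $\rho_\eta$ for $x>\eta$ and the logarithmic branch for $x^{1+p}<\eta$) gives
\[
F''(\eta^{+})=-\eta^{\,p-1}<0,
\]
so $F$ has a strict local maximum at $\eta$ and is negative immediately to the right. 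Concretely, with $\eta=0.1$, $p=2$, $x=0.2$ one computes $x^{p}\rho_\eta(x)\approx 0.01442$ while $\rho_\eta(x^{1+p})/(1+p)\approx 0.01288$. Thus any attempt to ``sign $F$ on $(\eta,\infty)$'' in the direction claimed by the lemma will fail.

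In short: part (2) should be read as the identity $x^{p}\rho_\eta(x)=\rho_\eta(x^{1+p})/(1+p)$ for $0\le x\le\eta$, which is the only regime relevant to the Bihari-type estimates in the paper; your proof of that identity is complete, and the remaining case you were wrestling with is an artefact of an imprecisely stated lemma rather than a genuine gap in your plan.
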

 \begin{lemma}\label{LEM:CONCAVE:01}
     Let \( \rho \) be a concave non-decreasing continuous function on \( \R_{+} \) such that \( \rho(0)=0 \). Then \( g(x):=\rho^{r}(x^{\frac{1}{s}}) \) is also a concave non-decreasing continuous function on \( \R_{+} \) such that \( g(0)=0 \) for all \( s \geqslant r \geqslant 1 \). Especially, $\rho_{0,\eta}(x):=\rho^2_{\eta}(x^{\frac{1}{2}})$ is a concave non-decreasing continuous function.
 \end{lemma}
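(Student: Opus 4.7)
Continuity and monotonicity of $g$, and the identity $g(0)=\rho(0)^r=0$, are immediate from the corresponding properties of $\rho$, $x\mapsto x^{1/s}$, and $y\mapsto y^r$ under composition. The substantive point is concavity of $g$.

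My plan for concavity is to establish it first under the temporary assumption $\rho\in C^{2}((0,\infty))$ with $\rho'>0$, and then to recover the general case by a standard mollification (which preserves concavity, monotonicity, and $\rho(0)=0$, and under which pointwise concavity of $g$ passes to the uniform limit on compacts). Under the smoothness hypothesis, the substitution $u=x^{1/s}$ and a direct differentiation give
\[
g''(x) \;=\; \frac{r\,u^{1-2s}\,\rho(u)^{r-2}}{s^{2}}\Bigl\{(1-s)\rho\rho'+(r-1)u(\rho')^{2}+u\,\rho\rho''\Bigr\},
\]
whose prefactor is non-negative, so concavity of $g$ reduces to showing that the bracket is non-positive. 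The term $u\,\rho\rho''$ is $\leq 0$ by concavity of $\rho$, while for the remaining two terms I would invoke the supporting-line inequality $u\rho'(u)\leq\rho(u)$, a direct consequence of concavity of $\rho$ combined with $\rho(0)=0$ (evaluating the tangent line to $\rho$ at $u$ at the point $0$ yields $0=\rho(0)\leq\rho(u)-u\rho'(u)$). Multiplying this bound by $(r-1)\rho'\geq 0$ and using $s\geq r$ gives
\[
(r-1)u(\rho')^{2} \;\leq\; (r-1)\rho\rho' \;\leq\; (s-1)\rho\rho',
\]
so $(1-s)\rho\rho'+(r-1)u(\rho')^{2}\leq 0$ and the whole bracket is non-positive. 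Any initial interval on which $\rho\equiv 0$ is handled trivially since $g$ vanishes there, and the pieces patch.

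The main obstacle is conceptual: composing the convex outer power $y\mapsto y^{r}$ ($r\geq 1$) with a concave function is not generically concavity-preserving, so the conclusion survives only because the inner root $x\mapsto x^{1/s}$ with $s\geq r$ provides exactly the slack needed to absorb the outer convexity in the final estimate. The special case $\rho_{0,\eta}(x)=\rho_\eta^{2}(x^{1/2})$ is then the application with $r=s=2$, once one checks by inspection of the piecewise definition that $\rho_\eta$ is itself concave, non-decreasing, continuous, and vanishes at $0$.
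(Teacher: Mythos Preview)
The paper does not give its own proof of this lemma; it simply cites Cao and He \cite{Cao2003}. Your argument is correct. The second-derivative computation under the temporary $C^{2}$ hypothesis is accurate, and the decisive inequality $u\rho'(u)\le\rho(u)$ is exactly the consequence of concavity together with $\rho(0)=0$ that forces the bracket $(1-s)\rho\rho'+(r-1)u(\rho')^{2}+u\rho\rho''$ to be non-positive when $s\ge r\ge 1$.

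The only soft spot is the approximation step: a standard convolution mollifier does not literally preserve $\rho(0)=0$, and $\rho$ may have infinite right derivative at the origin, so a sentence on how you regularize (e.g.\ mollify and then subtract the value at $0$, and add $\epsilon x$ to guarantee $\rho'>0$) would make it airtight. If you wish to bypass approximation entirely, observe that for $u=x^{1/s}$ the one-sided derivative factors as
\[
g'_+(x)=\frac{r}{s}\Bigl(\frac{\rho(u)}{u}\Bigr)^{r-1}\rho'_+(u)\,u^{\,r-s},
\]
a product of three non-negative quantities each non-increasing in $u$: the first because $\rho(u)/u$ is non-increasing for any concave $\rho$ with $\rho(0)=0$, the second because $\rho'_+$ is non-increasing for concave $\rho$, the third because $r\le s$. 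Hence $g'_+$ is non-increasing in $x$, which is precisely concavity of $g$, and no smoothness beyond the stated hypotheses is needed. Your remark on the initial interval where $\rho\equiv 0$ is in fact vacuous: a concave non-decreasing $\rho$ with $\rho(0)=0$ is either identically zero or strictly positive on $(0,\infty)$.
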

	In what follows, we first introduce assumptions about the slow-fast motion  \eqref{EQ:SDE:01} and give the main theorem. To ensure the existence and uniqueness of  strong solutions to the system \eqref{EQ:SDE:01}, we first assume that for any $ z\in \R^{2d} $, $f_{2}(z)$ and $\sigma_{2}(z)$ are measurable and locally Lipschitz throughout this paper. Moreover, we need the following assumptions.
  
		\noindent \hypertarget{(H1)}{(H1)}. For any  $ x_1,x_2\in \R^d,z_1,z_2\in \R^{2d}, $ 
		\begin{equation*}
			\begin{aligned}
				|f_1(z_1)-f_1(z_2)| &\leq |z_1-z_2|g_1(|z_1-z_2|),\\
				\|\sigma_1(x_1)-\sigma_1(x_2)\|^2 &\leq |x_1-x_2|^2g_2(|x_1-x_2|),
			\end{aligned}
		\end{equation*}
		where $\|\cdot\|$ is the Hilbert-Schmidt norm of the matrix in $\R^{n\times m}$, and $ g_i$, $i=1,2$ are positive continuous functions, bounded on $ [1,+\infty)$ and satisfy
		\begin{equation}\label{eq:xiaohe}
\lim _{x \downarrow 0} \frac{g_{1}(x)}{\log x^{-1}}=0,	\quad  \lim _{x \downarrow 0} \frac{g_{2}(x)}{\log x^{-1}}=\lambda\in [0,\infty).
		\end{equation}

	%\noindent \hypertarget{(H2)}{(H2)}. There exists two postive constants $ \alpha_1$ and $\alpha_2 $ such that for any $ z_1,z_2\in \R^{2d}, $
%\begin{equation}\label{EQ:0704:01}
    		%\left|f_{2}\left(z_1\right)-f_{2}\left(z_2\right)\right|^2 
		%+\left\|\sigma_{2}\left(z_1\right)-\sigma_{2}\left(z_2\right)\right\|^2 \leq \alpha_1\left|z_{1}-z_{2}\right|^2,
%\end{equation}
		%and for any $(x,y)\in \R^{2d}$,
%\begin{equation}\label{EQ:0704:02}
    		%	\sup _{y \in \mathbb{R}^{d}}\left\|\sigma_{2}\left(x, y\right)\right\| \leq \alpha_2\left(1+\left|x\right|\right).
%\end{equation}

%	Next, in order to study the LDP for the stochastic flow of the slow variable, we further make the following assumptions.
 
		%\noindent \hypertarget{(H3)}{(H3)}.
		%$\sup_{y\in \R^d} |f_1(x,y)|\leq \alpha_3(1+|x|). $ \textcolor{red}{Can it be deleted?}
  
	\noindent \hypertarget{(H2)}{(H2)}.
     There exist positive constants \( \beta_{1}\)  and $C$ such that
     \[
		\begin{split}
			2\left\langle y_{1}-y_{2}, f_{2}\left(x_1, y_{1}\right)- f_{2}\left(x_2, y_{2}\right)\right\rangle & +\left\|\sigma_{2}\left(x, y_{1}\right)-\sigma_{2}\left(x, y_{2}\right)\right\|^{2} \\ \leq-\beta_{1}\left|y_{1}-y_{2}\right|^{2}+C|x_1-x_2|^2,
		\end{split}
     \]
for any $(x_1,y_1)$, $(x_2,y_2)\in\R^{2d}$.

% 	\noindent \hypertarget{(H3)}{(H3)}.
%There exist two positive constants \( \beta_{2} \) and $\gamma $ such that\[
%		\begin{split}
%			2\left\langle y, f_{2}\left(x, y\right)\right\rangle  +\left\|\sigma_{2}\left(x, y\right)\right\|^{2}  \leq -\beta_{2}\left|y\right|^{2}+\gamma(1+|x|^{2}),
%		\end{split}
%		\]
%for any $x, y\in\R^d$.

    \noindent \hypertarget{(H3)}{(H3)}.
    There exists a positive constant $C$ such that
    \[
    \sup_{y\in \R^d}\|\sigma_2(x,y)\|^2\leq C(1+|x|^2),
    \]
    for any $(x,y)\in\R^{2d}$.
   \begin{remark}\label{RMK:01}
      By simple calculation, Assumption  (\hyperlink{(H1)}{H1}) implies that there is a \( \eta=\eta\left(p, \lambda\right)>0 \) sufficiently small such that
        \begin{equation}\label{eq:daxue}
        x^{p}\left(g_{1}(x)+g_{2}(x)\right) \leq C\left(d, p, \lambda\right)\rho_{\eta}\left(x^{p}\right), \quad x \in \mathbb{R}_{+}.
        \end{equation}  
 Furthermore, \( f_1\) and \(\sigma_1 \) satisfy the linearity growth condition, as shown in \cite[Corollary 3.4]{Cao2003}.
   \end{remark}
   
\begin{remark}
    By Assumption (\hyperlink{(H2)}{H2}) and direct calculations, we can deduce that 
    		\[
		\begin{split}
			2\left\langle y_{1}-y_{2}, f_{2}\left(x, y_{1}\right)- f_{2}\left(x, y_{2}\right)\right\rangle & +\left\|\sigma_{2}\left(x, y_{1}\right)-\sigma_{2}\left(x, y_{2}\right)\right\|^{2}  \leq-\beta_{1}\left|y_{1}-y_{2}\right|^{2},
		\end{split}
		\]
for any $y_1,y_2\in\R^d$.
Moreover, there exist two positive constants \( \beta_{2} \) and $\gamma $ such that\[
		\begin{split}
			2\left\langle y, f_{2}\left(x, y\right)\right\rangle  +\left\|\sigma_{2}\left(x, y\right)\right\|^{2}  \leq -\beta_{2}\left|y\right|^{2}+\gamma(1+|x|^{2}),
		\end{split}
		\]
for any $(x, y)\in\R^{2d}$.

\end{remark}
	%We will prove that under assumptions (H1) and (H2), the stochastic slow-fast motion \eqref{EQ:SDE:01} admits a unique (pathwise) solution $ (X^{(\varepsilon,\delta)},Y^{(\varepsilon,\delta)}).$ For any fixed fast variable initial value $y_0 $, we can show that  \( X_{\cdot}^{(\varepsilon, \delta)}(\cdot,y_0)(\omega) \in \C_T \) $\P\text{-a.s.}.$
	\begin{lemma}
		Under assumptions  (\hyperlink{(H1)}{H1})-(\hyperlink{(H2)}{H2}), the stochastic slow-fast motion \eqref{EQ:SDE:01} has the unique solution $ (X_t^{(\varepsilon,\delta)}(x,y),Y_t^{(\varepsilon,\delta)}(x,y)) $ with initial value $(X_0^{(\varepsilon,\delta)},Y_0^{(\varepsilon,\delta)})= (x,y)$.   Moreover, when the fast variable's initial data is fixed as $ y_0, $ the mapping $ (t,x)\mapsto X_t^{(\varepsilon,\delta)}(x,y_0)$ is in $\C_T$, $\P\text{-a.s.}.$
	\end{lemma}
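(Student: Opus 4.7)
The plan is to prove the lemma in two stages. In the first stage, for each fixed initial datum $(x,y)$, I would establish existence and pathwise uniqueness of a strong solution to the coupled system \eqref{EQ:SDE:01}. In the second stage, with the fast initial value fixed as $y_0$, I would upgrade the solution family $\{X^{(\varepsilon,\delta)}_{\cdot}(x,y_0)\}_{x\in\R^d}$ to a version that is jointly continuous in $(t,x)$, thereby exhibiting an element of $\mathbb{C}_T$, $\P$-almost surely.

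For existence and uniqueness, I would first localize: since $f_2,\sigma_2$ are only locally Lipschitz, I introduce stopping times $\tau_R:=\inf\{t\geq 0:|X_t|+|Y_t|>R\}$, truncate the coefficients outside a large ball, and run the standard Picard iteration to obtain a local solution. The subtle point is that $f_1,\sigma_1$ satisfy only the non-Lipschitz condition (\hyperlink{(H1)}{H1}), so when comparing two candidate solutions $(X^{(i)},Y^{(i)})$ driven by the same $(W^1,W^2)$, I would apply \ito's formula to $|X^{(1)}_t-X^{(2)}_t|^2+\delta^{-1}|Y^{(1)}_t-Y^{(2)}_t|^2$. The dissipation $-\beta_1|y_1-y_2|^2$ in (\hyperlink{(H2)}{H2}) absorbs the cross term $C|x_1-x_2|^2$ and eliminates the $Y$-contribution, while (\hyperlink{(H1)}{H1}) and the concavity estimate \eqref{eq:daxue} produce an upper bound of the form $C\int_0^t\rho_\eta(\E|X^{(1)}_s-X^{(2)}_s|^2)\,\d s$. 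Lemma \ref{THM02}(2) applied with vanishing initial datum then forces $X^{(1)}=X^{(2)}$, and global existence follows by sending $R\to\infty$ after using the coercivity consequence of (\hyperlink{(H2)}{H2}) stated in Remark 2.6 to preclude explosion.

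For the existence of a jointly continuous version, I would apply a Kolmogorov-type continuity theorem on each ball $D_N$ and glue via the seminorm $\|\cdot\|_{\mathbb{C}_T}$. Time regularity is routine: the linear growth of $f_1,\sigma_1$ noted in Remark 2.5, combined with BDG and a standard moment bound on $Y^{(\varepsilon,\delta)}$ from (\hyperlink{(H2)}{H2})--(\hyperlink{(H3)}{H3}), yields $\E|X^{(\varepsilon,\delta)}_t(x,y_0)-X^{(\varepsilon,\delta)}_s(x,y_0)|^p\leq C_{p,N}|t-s|^{p/2}$ uniformly on $D_N$. The decisive step is the spatial estimate. Writing $\Delta X_s:=X^{(\varepsilon,\delta)}_s(x_1,y_0)-X^{(\varepsilon,\delta)}_s(x_2,y_0)$ and $\Delta Y_s$ analogously, applying \ito's formula to $|\Delta X_t|^p$ and invoking (\hyperlink{(H1)}{H1}) together with Lemma \ref{THM03}(2) and Jensen's inequality gives
\begin{equation*}
\E|\Delta X_t|^p \leq |x_1-x_2|^p + C\int_0^t\bigl(\rho_\eta(\E|\Delta X_s|^p)+\E|\Delta Y_s|^p\bigr)\,\d s.
\end{equation*}
Once $\E|\Delta Y_s|^p$ is controlled by $\sup_{u\leq s}\E|\Delta X_u|^p$ (see below), Lemma \ref{THM02}(2) yields the Bihari-type bound $\E|\Delta X_t|^p\leq C_T|x_1-x_2|^{p\exp(-CT)}$; choosing $p$ large makes the exponent exceed $d+1$, and Kolmogorov's criterion produces a continuous version on $[0,T]\times D_N$. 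Gluing over $N\in\N$ via the seminorm $\|\cdot\|_{\mathbb{C}_T}$ yields the claim.

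The principal obstacle is exactly the $\E|\Delta Y|^p$ term. Since both fast processes start from $y_0$ but are driven by coefficients depending on $x$, $\Delta Y_t$ is nontrivial. Assumption (\hyperlink{(H2)}{H2}) is tailored for this: applying \ito's formula to $|\Delta Y_t|^p$ on the accelerated time scale $\delta^{-1}t$, the dissipative coercivity $\beta_1/\delta$ dominates the source $C|\Delta X_t|^2/\delta$ arising from $x$-dependence in $f_2,\sigma_2$. A Gronwall-type argument then produces an estimate of the form $\E|\Delta Y_t|^p\leq C\sup_{s\leq t}\E|\Delta X_s|^p$, with constant independent of $\delta$. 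This closes the integral inequality above and permits Lemma \ref{THM02}(2) to complete the argument, placing $(t,x)\mapsto X^{(\varepsilon,\delta)}_t(x,y_0)$ in $\mathbb{C}_T$ almost surely.
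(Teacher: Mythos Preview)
The paper states this lemma without proof, so there is no ``paper's own proof'' to compare against directly. Your outline for existence and pathwise uniqueness is sound and is essentially what the paper has in mind when it later writes (for the controlled analogue, Lemma~\ref{THM120}) that one can follow Ren--Zhang. The substantive content is the second part, the joint $(t,x)$--continuity, and here your direct approach differs from what the paper actually carries out later for the controlled system (Proposition~\ref{THM18} and Lemma~\ref{THM16}) and contains a gap.

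The gap is the step $\E|\Delta Y_t|^p\le C\sup_{s\le t}\E|\Delta X_s|^p$ for $p>2$. Assumption (\hyperlink{(H2)}{H2}) is a second--moment dissipativity condition: it balances $2\langle\Delta Y,\Delta f_2\rangle$ against $\|\Delta\sigma_2\|^2$ with the specific coefficients $2:1$. When you apply \ito's formula to $|\Delta Y_t|^p$ the corresponding ratio becomes $p:\tfrac{p(p-1)}{2}$, i.e.\ $2:(p-1)$, and for $p>3$ the diffusion contribution $\|\Delta\sigma_2\|^2$ is no longer absorbed by (\hyperlink{(H2)}{H2}). Since $\sigma_2$ is only locally Lipschitz, there is no global control on the surplus term, and the claimed $p$-th moment bound for $\Delta Y$ does not follow. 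But you need large $p$ so that the Bihari exponent $p\,e^{-CT}$ exceeds $d+1$, so this is not a cosmetic issue. (A smaller point: you invoke (\hyperlink{(H3)}{H3}) for time regularity, while the lemma only assumes (\hyperlink{(H1)}{H1})--(\hyperlink{(H2)}{H2}).)

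For contrast, the device the paper uses in Proposition~\ref{THM18} is designed precisely to avoid higher moments of $\Delta Y$. One works with a truncated metric $f(|\cdot|)$ satisfying $f(x)^p\le x^2$, so that all stochastic error terms in the decomposition $\hat X\to\tilde X\to X^{h}$ need only be controlled in $L^2$ (Lemmas~\ref{LEM:0730:01}--\ref{LEM:0730:02}); the high exponent $2(d+1)$ comes entirely from the \emph{deterministic} skeleton $X^{h}$, for which $p$-th moment bounds are trivial. The price is that the $L^2$ error terms are only made small by taking $\varepsilon$ small, so that argument proves the flow property uniformly for small $\varepsilon$ rather than for each fixed $(\varepsilon,\delta)$. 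If you want a direct proof valid for fixed $(\varepsilon,\delta)$ along your lines, one workable route is to apply \ito\ to $|(\Delta X,\Delta Y)|^p$ for the full $2d$--dimensional process after localisation by $\tau_R=\inf\{t:|X_t|+|Y_t|>R\}$, use the local Lipschitz bound for $\sigma_2$ on $D_R$, and then remove the localisation via the uniform moment bounds coming from (\hyperlink{(H2)}{H2}); but this requires more than the single sentence you devote to it.
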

	
	 %We assert that the stochastic flow of  slow variable $ X_{\cdot}^{(\varepsilon, \delta)}(\cdot,y_0)  $ satisfies the LDP. 
  We provide a precise statement of our main theorem and we shall give the proof of this theorem in Section \ref{SEC04}.
	\begin{theorem}\label{THM22}
		Suppose that (\hyperlink{(H1)}{H1})-(\hyperlink{(H3)}{H3}) hold. Then, the stochastic flow of the  slow variable \( X^{(\varepsilon, \delta)} \) in \eqref{EQ:SDE:01}   satisfies the LDP with rate function \( I: \C_T\rightarrow[0, \infty] \), i.e., for any $ A\in \mathcal{B}(\C_T), $
		\[
		-I\left(A^{\circ}\right) \leq \liminf_{\varepsilon \rightarrow 0} \varepsilon \log \nu_{{\varepsilon,\delta}}(A) \leq \limsup_{\varepsilon \rightarrow 0} \varepsilon \log \nu_{{\varepsilon,\delta}}(A) \leq-I(\bar{A}),
		\]
		where  $ \nu_{{\varepsilon,\delta}}$ denote   the distribution of  $ X_{\cdot}^{\varepsilon,\delta}(\cdot,y_0)$ in $ \C_T $,   \( A^{\circ} \) (or \( \bar{A} \))  denote interior (or closure) of \( A \) under the topology of $~\C_T$, and
		\begin{equation}\label{EQ10}
			I(f):=\frac{1}{2} \inf _{\{h \in \mathbb{H}: S(h)=f\}}\|h\|_{\mathbb{H}}^{2}, \quad f \in \mathbb{C}_{T} ,
		\end{equation}
		\( \inf (\varnothing)=+\infty \) by convention. Here, $S(h)$ is defined by \eqref{EQ:0629:01}.
	\end{theorem}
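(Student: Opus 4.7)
The plan is to reduce the LDP to the Laplace principle (using the known equivalence in Polish spaces) and then verify the two sufficient conditions of the Budhiraja–Dupuis weak convergence framework for the controlled system associated to $X^{(\varepsilon,\delta)}$. Concretely, writing $X^{\varepsilon,\delta,h^{\varepsilon}}$ for the solution of the SDE obtained by Girsanov-shifting the Brownian motion $W^1$ by $\varepsilon^{-1/2}\int_0^{\cdot}\dot h^{\varepsilon}_1(s)\,\mathrm{d}s$ (with $h^{\varepsilon}\in\mathcal{A}_N$), I would prove: (i) for every $N<\infty$, the map $h\mapsto S(h)$ has precompact image on $B_N$ as a subset of $\mathbb{C}_T$; and (ii) whenever $h^{\varepsilon}\to h$ in distribution as $B_N$-valued random variables (in the weak topology), one has $X^{\varepsilon,\delta,h^{\varepsilon}}_{\cdot}(\cdot,y_0)\to S(h)$ in distribution in $\mathbb{C}_T$. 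These two conditions, combined with the variational representation of exponential functionals of Brownian motion, yield the Laplace principle with rate function $I$ given by \eqref{EQ10}.

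For the compactness statement (i), I would take a sequence $h_n\in B_N$ with $h_n\rightharpoonup h$ weakly in $\mathbb{H}$, and analyze the associated skeleton solutions. Uniform-in-$n$ moment bounds for $S(h_n)(t,x)$ on every ball $D_N$ follow from the linear-growth consequence of (H1) noted in Remark~2.3 together with Gronwall's inequality. To upgrade to convergence in $\mathbb{C}_T$, I would restrict to each compact block $\mathbb{C}_T^M=C([0,T]\times D_M;\R^d)$ and apply Arzelà–Ascoli; the time-equicontinuity is routine using the linear growth in the drift and the control $\|h_n\|_{\mathbb{H}}\leq N$, while the spatial equicontinuity in $x$ requires applying Bihari's inequality (Lemma 2.2(2)) to $|S(h_n)(t,x_1)-S(h_n)(t,x_2)|^p$ with the concave function $\rho_\eta$, using (H1) and Remark 2.3 in the form \eqref{eq:daxue}. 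A diagonal argument over $M\in\N$ then gives compactness in $\mathbb{C}_T$.

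For the weak convergence statement (ii), I would follow Khasminskii's time-discretization: partition $[0,T]$ into intervals of length $\Delta=\Delta(\varepsilon,\delta)$ with $\Delta/\delta\to\infty$ and $\Delta\to 0$, and introduce an auxiliary process $\tilde Y^{\varepsilon,\delta}$ whose slow component is frozen on each block at its left endpoint. Using the exponential ergodicity of the frozen fast equation (which is furnished by the dissipativity bound in (H2) and the growth bound in (H3)), one shows that $\tilde Y^{\varepsilon,\delta}$ is close in $L^2$ to $Y^{\varepsilon,\delta}$, and that block-averages of $f_1(X^{\varepsilon,\delta,h^\varepsilon}_s,\tilde Y^{\varepsilon,\delta}_s)$ converge to $\bar f_1(X^{\varepsilon,\delta,h^\varepsilon}_s):=\int f_1(x,y)\mu^x(\mathrm{d}y)$, where $\mu^x$ is the invariant measure of the frozen fast equation. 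Combined with tightness of $X^{\varepsilon,\delta,h^{\varepsilon}}$ in $\mathbb{C}_T$ (established via the spatial modulus of continuity from step (i) applied to the controlled system, together with standard BDG-based time estimates for the martingale part, noting that the diffusion $\sqrt{\varepsilon}\,\sigma_1$ vanishes in the limit), one identifies the limit as the unique solution of the skeleton equation, i.e.\ $S(h)$.

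The main obstacle I expect is the spatial tightness of the controlled slow flow $x\mapsto X^{\varepsilon,\delta,h^{\varepsilon}}_t(x,y_0)$ on the whole space $\R^d$ under the non-Lipschitz condition (H1), \emph{in the presence of} both the fast oscillation averaged out by Khasminskii's technique and the Cameron–Martin shift by $h^{\varepsilon}$. Standard Kolmogorov-type criteria are unavailable because the best control on $|X^{\varepsilon,\delta,h^{\varepsilon}}_t(x_1)-X^{\varepsilon,\delta,h^{\varepsilon}}_t(x_2)|$ is of logarithmic (not Hölder) type; one must therefore combine Lemma 2.2(2), Lemma 2.4, and Remark 2.3 to obtain a modulus of the form $|x_1-x_2|^{\exp(-C_N T)}$ uniformly in $\varepsilon$ and in $h^{\varepsilon}\in\mathcal{A}_N$, and verify that the cross term arising from $h^{\varepsilon}$ and the averaging error does not destroy this estimate. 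Once this uniform spatial modulus and a matching temporal modulus are in hand, tightness in each $\mathbb{C}_T^M$ and a diagonal extraction deliver tightness in $\mathbb{C}_T$, and the identification of the limit completes the proof of Theorem~\ref{THM22}.
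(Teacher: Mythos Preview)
Your overall architecture---reduce to the Laplace principle, verify the two Budhiraja--Dupuis conditions via the skeleton map $S$ and Khasminskii averaging for the controlled system---is exactly the paper's. The compactness of $\{S(h):h\in B_N\}$ and the continuity of $h\mapsto S(h)$ are handled in the paper just as you outline (Lemmas~4.1--4.2), and the averaging step with the frozen/auxiliary process is the content of Lemmas~3.11--3.13.

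The real divergence is in the spatial tightness of the controlled flow $x\mapsto \hat X^{(\varepsilon,\delta)}_t(x)$, and here your proposal has a gap. You aim for a uniform-in-$\varepsilon$ modulus of the form $|x_1-x_2|^{\exp(-C_NT)}$ for $\hat X$ itself via a direct Bihari argument. The obstruction is that the drift $f_1(\hat X_t,\hat Y_t)$ carries the fast variable, and $\hat Y_t$ depends on the slow initial point $x$ through the fully coupled system. Closing a Bihari estimate for $|\hat X_t(x_1)-\hat X_t(x_2)|$ therefore requires controlling $|\hat Y_t(x_1)-\hat Y_t(x_2)|$; the dissipativity (H2) gives this only up to error terms (coming from the control shift $\sqrt{\delta/\varepsilon}\,\sigma_2\dot h_2$ and from $|\hat X_s(x_1)-\hat X_s(x_2)|$) that do \emph{not} scale with $|x_1-x_2|$ and hence cannot be absorbed into the claimed modulus uniformly in $\varepsilon$. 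The same issue appears if you try to extract a Kolmogorov exponent from the averaging error: those bounds (Lemmas~3.12--3.13) vanish with $\varepsilon$ but are independent of $|x_1-x_2|$.

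The paper circumvents this with a device you do not mention: in Proposition~3.14 it introduces a bounded truncation $f$ with $f(r)=r$ near $0$ and works in the metric $\mathrm{dist}(x,y)=f(|x-y|)$. One then writes the five-term decomposition
\[
\hat X_t(x_1)-\hat X_t(x_2)=\big[\hat X_t(x_1)-\tilde X_t(x_1)\big]+\big[\tilde X_t(x_1)-X_t^{h^{(\varepsilon,\delta)}}(x_1)\big]+\big[X_t^{h^{(\varepsilon,\delta)}}(x_1)-X_t^{h^{(\varepsilon,\delta)}}(x_2)\big]+\cdots
\]
Because $(f(r))^p\le r^2$ for $p\ge 2$, the four ``averaging error'' pieces are dominated by their $L^2$ norms, which are small in $\varepsilon$ by Lemmas~3.12--3.13. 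Only the middle piece, the \emph{skeleton} difference $Z_t=X_t^{h}(x_1)-X_t^{h}(x_2)$, is analyzed with a Bihari/Ren--Zhang argument, using the time-dependent exponent $p(t)=d+1+t+\int_0^t|\dot h(s)|\,\mathrm d s$ applied to $f(|Z_t|)^{p(t)}$; this is where the non-Lipschitz modulus is converted into a genuine polynomial bound $|x_1-x_2|^{2(d+1)}$ suitable for the Kolmogorov--Totoki criterion. Your proposal correctly diagnoses that ``standard Kolmogorov-type criteria are unavailable'', but the remedy you suggest (direct Bihari on $\hat X$) is not the one that works; the truncation-plus-skeleton route is the missing ingredient.
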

	
	\section{Auxiliary Lemmas}\label{SEC03}
	\subsection{The frozen process}
	For fixed  $x\in\R^d$, we introduce the frozen process associated to the fast variable as follows,
	\begin{equation}\label{EQ05}
		\d {Y}_{t}^x(y)=f_{2}\left({x}, {Y}_{t}^x(y)\right) \d t+\sigma_{2}\left({x}, {Y}_{t}^x(y)\right) \d W^2_{t},\quad {Y}_{0}^x(y) =y\in \R^d.
	\end{equation}
	 Since $f_{2}$ and $\sigma_{2}$ satisfy the  locally Lipschitz  condition,  it is easy to show that  
  \eqref{EQ05} has a unique local maximal solution which is also the unique global solution under the assumption (\hyperlink{(H2)}{H2}). Denote the unique strong solution by \( \left\{{Y}_{t}^{x}(y)\right\}_{t \geq 0} \),  which is a Markov process. Moreover,  (\hyperlink{(H2)}{H2}) ensures that there exists a unique invariant probability measure \( \mu_{{x}}\) for the frozen process, which can be deduced from Prato and Zabczyk \cite[Theorem 6.3.2]{prato1996}.  
	
	Let \( \left\{P_{t}^{x}\right\}_{t \geq 0} \) be the transition semigroup of \( \left\{Y_{t}^{x}(y)\right\}_{t \geq 0} \), that is, for any bounded measurable function \( f: \mathbb{R}^{d} \rightarrow \mathbb{R} \),
	\[
	P_{t}^{x} f(y):=\mathbb{E}\left[f\left(Y_{t}^{x}(y)\right)\right], ~ y \in \mathbb{R}^{d},~ t \geq 0 .
	\]
	Under the assumption (\hyperlink{(H2)}{H2}) , it is easy to prove that
	\begin{equation}\label{EQ24}
		\mathbb{E}[\left|Y_{t}^{x}(y)\right|^{2}]\leq \e^{-\beta_{2} t}|y|^{2}+\frac{\gamma}{\beta_{2}}(1+|x|^{2})
	\end{equation}
	and \( \left\{P_{t}^{x}\right\}_{t \geq 0} \) has a unique invariant measure \( \mu_{x}(\d y) \) satisfying
	\begin{equation}\label{EQ25}
		\int_{\mathbb{R}^{d}}|y|^{p} \mu_{x}(\d y) \leq C(1+|x|^{p})
	\end{equation}
	 for some \( p \geq 1 \) (see, e.g., \cite[Lemma 3.6, Proposition 3.8]{LIU2020}).
	
	In what follows, we give some estimations about the frozen process \eqref{EQ05}.
	\begin{lemma}\label{THM04}
		Suppose that  (\hyperlink{(H2)}{H2}) holds. For fixed \( x\in \R^d \), we have
		\[
		\mathbb{E}[\left|Y_{t}^{x}(y_1)-Y_{t}^{x}(y_2)\right|^{2}] \leq \left|y_{1}-y_{2}\right|^{2}\e^{-\beta_{1} t},  
		\]
  for any $t\geq 0$, $y_{1}, y_{2} \in \mathbb{R}^{d}$.
	\end{lemma}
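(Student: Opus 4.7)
The plan is to apply \ito's formula to the squared difference $|Y_t^x(y_1) - Y_t^x(y_2)|^2$ of two frozen processes started from different initial data but driven by the same Brownian motion $W^2$ and sharing the frozen parameter $x$. Because $x$ is fixed, the relevant dissipativity bound is the one in the remark following (\hyperlink{(H2)}{H2}) with $x_1=x_2=x$, namely
\[
2\langle y_1 - y_2, f_2(x,y_1) - f_2(x,y_2)\rangle + \|\sigma_2(x,y_1) - \sigma_2(x,y_2)\|^2 \leq -\beta_1 |y_1 - y_2|^2,
\]
which is precisely the coefficient in the drift of the \ito\ differential.

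Concretely, I would set $Z_t := Y_t^x(y_1) - Y_t^x(y_2)$ and, after a standard localization by stopping times $\tau_n := \inf\{t\geq 0: |Y_t^x(y_1)|\vee|Y_t^x(y_2)|\geq n\}$ to handle the local martingale arising from the stochastic integral, write
\[
|Z_{t\wedge\tau_n}|^2 = |y_1-y_2|^2 + \int_0^{t\wedge\tau_n}\!\bigl[2\langle Z_s, f_2(x,Y_s^x(y_1))-f_2(x,Y_s^x(y_2))\rangle + \|\sigma_2(x,Y_s^x(y_1))-\sigma_2(x,Y_s^x(y_2))\|^2\bigr]\d s + M_{t\wedge\tau_n},
\]
where $M$ is a local martingale with mean zero up to $\tau_n$. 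Taking expectations, using the dissipativity inequality above on the integrand, yields
\[
\mathbb{E}[|Z_{t\wedge\tau_n}|^2] \leq |y_1-y_2|^2 - \beta_1 \int_0^t \mathbb{E}[|Z_{s\wedge\tau_n}|^2]\,\d s.
\]

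From here, Gronwall's inequality applied to the finite quantity $\mathbb{E}[|Z_{t\wedge\tau_n}|^2]$ gives
\[
\mathbb{E}[|Z_{t\wedge\tau_n}|^2] \leq |y_1-y_2|^2 \e^{-\beta_1 t},
\]
after which I would send $n\to\infty$ and invoke Fatou's lemma, combined with the non-explosion of $Y^x$ guaranteed by the coercivity estimate \eqref{EQ24}, to obtain the desired bound. The only mildly delicate point is justifying the localization and the limit: this requires knowing that $\tau_n\to\infty$ almost surely, which follows from (\hyperlink{(H2)}{H2}) through the $L^2$-bound \eqref{EQ24} on each $Y_t^x(y_i)$; otherwise, the argument is a textbook synchronous coupling combined with Gronwall, and no further obstacle is anticipated.
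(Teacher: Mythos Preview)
Your strategy is the same as the paper's---\ito's formula plus the dissipativity in (\hyperlink{(H2)}{H2})---but there is a small gap in the Gronwall step as you wrote it. After localization, what \ito's formula actually yields (upon taking expectations and applying (\hyperlink{(H2)}{H2})) is
\[
\E[|Z_{t\wedge\tau_n}|^2] \le |y_1-y_2|^2 - \beta_1\,\E\!\int_0^{t\wedge\tau_n}|Z_s|^2\,\d s = |y_1-y_2|^2 - \beta_1\int_0^t \E\bigl[\1_{\{s<\tau_n\}}|Z_s|^2\bigr]\,\d s.
\]
Since $\E[\1_{\{s<\tau_n\}}|Z_s|^2]\le \E[|Z_{s\wedge\tau_n}|^2]$ and the coefficient $-\beta_1$ is negative, replacing the former by the latter moves the right-hand side \emph{down}, not up; so the inequality $\E[|Z_{t\wedge\tau_n}|^2]\le |y_1-y_2|^2 - \beta_1\int_0^t \E[|Z_{s\wedge\tau_n}|^2]\,\d s$ that you feed into Gronwall does not follow.

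The paper sidesteps this by applying \ito's formula to $\e^{\beta_1 t}|Z_t|^2$ rather than to $|Z_t|^2$. The drift of the weighted process is then nonpositive by (\hyperlink{(H2)}{H2}), and one obtains $\E[\e^{\beta_1 (t\wedge\tau_n)}|Z_{t\wedge\tau_n}|^2]\le |y_1-y_2|^2$ directly, with no Gronwall step needed; your Fatou argument then finishes the job. Your localization and limit passage are fine once this adjustment is made.
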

	\begin{proof}
		By \ito's formula and  (\hyperlink{(H2)}{H2}), we obtain
		\begin{equation*}
			\begin{split}
				 \E & [\e^{\beta_{1}t}|Y_t^x(y_1)-Y_t^x(y_2)|^2] \\
				& = |y_1-y_2|^2+\E\int_{0}^{t}2\langle Y_t^x(y_1)-Y_t^x(y_2), f_2(x,Y_t^x(y_1))-f_2(x,Y_t^x(y_2))	\rangle\d s\\
				& \quad+ \E\int_{0}^{t} \|\sigma_{2}(x,Y_t^x(y_1)-\sigma_{2}(x,Y_t^x(y_2)))\|^2 \d s
				+\beta_{1}\E\int_{0}^{t}\e^{\beta_{1}s}|Y_t^{x}(y_1) -Y_t^{x}(y_2) |^2\d s\\
				&\leq   |y_1-y_2|^2.
			\end{split}
		\end{equation*}
	Hence,
	\[ 
	\E[|Y_t^{x}(y_1) -Y_t^x(y_2)|^2] \leq  |y_1-y_2|^2\e^{-\beta_{1}t}.
	 \]
	 The proof is complete.
	\end{proof}
	\begin{lemma}\label{THM05}
		Suppose that (\hyperlink{(H1)}{H1}) and  (\hyperlink{(H2)}{H2}) hold. For fixed \( x \in \mathbb{R}^{d} \) , there exist two positive constants  \( C\) and $ \eta_1 $ small enough, such that 
	%\begin{equation}\label{EQ08}
	%	\left|\mathbb{E}\left[f_{1}\left( x, Y_{s}^{x}(y)\right)\right]-\int_{\mathbb{R}^{d}} f_{1}( x, z) \mu_{x}(\d z)\right|^{2} \leq C e^{-\beta_{1} s}\left(1+|x|^{2}+|y|^{2}\right) .
	%\end{equation}
	\begin{equation}\label{EQ08}
		\bigg|\mathbb{E}\left[f_{1}\left( x, Y_{t}^{x}(y)\right)\right]-\int_{\mathbb{R}^{d}} f_{1}( x, u) \mu_{x}(\d u)\bigg| \leq C\rho_{\eta_1}\left( \e^{-\beta_{1}t/2}(1+|x|+|y|)\right),
	\end{equation}
  for any $t\geq 0$, $y\in \mathbb{R}^{d}$.
	\end{lemma}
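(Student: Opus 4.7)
The plan is to exploit the invariance of $\mu_x$ under the frozen semigroup $\{P_t^x\}_{t\geq 0}$: writing $\int_{\R^d} f_1(x,u)\,\mu_x(\d u) = \int_{\R^d} \E[f_1(x, Y_t^x(u))]\,\mu_x(\d u)$, the quantity to bound becomes
$$\bigg|\int_{\R^d}\E\bigl[f_1(x, Y_t^x(y)) - f_1(x, Y_t^x(u))\bigr]\,\mu_x(\d u)\bigg|,$$
which is amenable to control via the contractive coupling estimate of Lemma \ref{THM04} combined with the non-Lipschitz regularity of $f_1$ from (\hyperlink{(H1)}{H1}).

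The key steps are as follows. First, (\hyperlink{(H1)}{H1}) applied to the pairs $z_1 = (x, Y_t^x(y))$ and $z_2 = (x, Y_t^x(u))$ gives the pointwise bound $|f_1(x, Y_t^x(y)) - f_1(x, Y_t^x(u))| \leq |Y_t^x(y) - Y_t^x(u)|\,g_1(|Y_t^x(y) - Y_t^x(u)|)$, and Remark \ref{RMK:01} with $p=1$ upgrades the right-hand side to $C\rho_\eta(|Y_t^x(y) - Y_t^x(u)|)$ for a sufficiently small $\eta \in (0, 1/\e)$. Second, taking expectations and applying Jensen's inequality to the concave non-decreasing function $\rho_\eta$ yields
$$\E\bigl[\rho_\eta(|Y_t^x(y) - Y_t^x(u)|)\bigr] \leq \rho_\eta\bigl(\E[|Y_t^x(y) - Y_t^x(u)|]\bigr) \leq \rho_\eta\bigl(|y-u|\,\e^{-\beta_1 t/2}\bigr),$$
where the final inequality combines Cauchy-Schwarz with Lemma \ref{THM04}. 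Third, integrating over $u$ against $\mu_x$ and applying Jensen once more with respect to the probability measure $\mu_x$, together with the triangle inequality and the first-moment bound \eqref{EQ25}, produces a bound of the form $C\rho_\eta\bigl(C'\e^{-\beta_1 t/2}(1+|x|+|y|)\bigr)$.

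I would then absorb the internal constant $C'$ using the subadditivity of $\rho_\eta$ (which follows from concavity and $\rho_\eta(0)=0$, giving $\rho_\eta(C'z) \leq C''\rho_\eta(z)$ for any $z \geq 0$), arriving at the stated inequality with $\eta_1 = \eta$. The main obstacle is the careful bookkeeping of constants through the two applications of Jensen's inequality together with the final rescaling: $\eta$ must be chosen small enough—depending only on the constants in (\hyperlink{(H1)}{H1})—so that Remark \ref{RMK:01} applies uniformly on $\R_+$, and one must ensure that the scaling absorbed inside $\rho_\eta$ does not inflate the right-hand side in a way that would spoil the target form of the bound.
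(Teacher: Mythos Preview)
Your proposal is correct and follows essentially the same route as the paper's proof: both use the invariance of $\mu_x$ to write the difference as $\int_{\R^d}\E[f_1(x,Y_t^x(y))-f_1(x,Y_t^x(u))]\,\mu_x(\d u)$, then apply (\hyperlink{(H1)}{H1}), Remark~\ref{RMK:01}, Jensen's inequality for the concave $\rho_{\eta_1}$, Lemma~\ref{THM04}, and the moment bound \eqref{EQ25}. The only cosmetic difference is that the paper applies Jensen once with respect to the joint measure $\P\otimes\mu_x$, whereas you split it into two successive applications; both are valid and lead to the same estimate.
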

	\begin{proof}
  By Remark \ref{RMK:01}, there exists two positive constants $C$ and  $\eta_{1}$ such that for any $x \in \mathbb{R}_{+}$,  $xg_1(x)\leq C\rho_{\eta_{1}}(x)$. Notice that $ \rho_{\eta_1} $ is a concave function. By  (\hyperlink{(H1)}{H1}),  Jensen's inequality and Lemma \ref{THM04}, we have that 
	\begin{align*}
			\bigg|\E&[f_1(x,Y_t^x(y))]- \int_{\R^d}f_1(x,u)\mu_{{x}}(\d u)\bigg| \\
   & \leq \int_{\R^d} \E [\left|f_1(x,Y_t^{x}(y)) - f_1(x,Y_t^x(u))\right|] \mu_x(\d u)\\
			&\leq  \int_{\R^d} \E \left[|Y_t^{x}(y)-Y_t^x(u)|g_1(|Y_t^{x}(y)-Y_t^x(u)|)\right]\mu_x(\d u)\\
			& \leq C\int_{\R^d} \E [\rho_{\eta_1}(|Y_t^x(y)-Y_t^x(u)|)]\mu_x(\d u)\\
			& \leq C\rho_{\eta_1}\left(\int_{\R^d}  \E [|Y_t^x(y)-Y_t^x(u)|]\mu_x(\d u)\right)\\
			& \leq  C\rho_{\eta_1}\left( \int_{\R^d}\e^{-\beta_1{t}/2}|y-u|\mu_{x}(\d u)  \right)\\
			& \leq  C\rho_{\eta_1}\left( \e^{-\beta_{1}t/2}(1+|x|+|y|)\right),
	\end{align*}
 where  \eqref{EQ25} has been used in the last inequality.  The proof is complete.
	\end{proof}

	\begin{lemma}\label{THM06}
		Suppose that  (\hyperlink{(H2)}{H2}) hold. For fixed \( y \in  \mathbb{R}^{d} \), we have
		\[
		\mathbb{E}[|Y_{t}^{x_{1}}(y)-Y_{t}^{x_{2}}(y)|^{2}] \leq C|x_{1}-x_{2}|^{2},
		\]
  for any $t\geq 0$ and $x_{1}, x_{2}\in \R^d$.
	\end{lemma}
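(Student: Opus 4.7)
The plan is to mirror the strategy used for Lemma \ref{THM04}, applying \ito's formula to the difference process $Z_t := Y_t^{x_1}(y) - Y_t^{x_2}(y)$, but this time allowing the dissipativity in $y$ to absorb the decaying term $|Z_t|^2$ while the residual $|x_1-x_2|^2$ term (which did not appear in Lemma \ref{THM04}) is picked up as a forcing term.

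First, I would write the SDE for $Z_t$, starting from $Z_0 = 0$,
\begin{equation*}
\d Z_t = \bigl(f_2(x_1, Y_t^{x_1}(y)) - f_2(x_2, Y_t^{x_2}(y))\bigr)\d t + \bigl(\sigma_2(x_1, Y_t^{x_1}(y)) - \sigma_2(x_2, Y_t^{x_2}(y))\bigr)\d W_t^2,
\end{equation*}
and apply \ito's formula to $\e^{\beta_1 t}|Z_t|^2$. Taking expectation kills the martingale part, and I would then invoke Assumption (\hyperlink{(H2)}{H2}) directly on the integrand $2\langle Z_s, f_2(x_1, Y_s^{x_1}(y)) - f_2(x_2, Y_s^{x_2}(y))\rangle + \|\sigma_2(x_1, Y_s^{x_1}(y)) - \sigma_2(x_2, Y_s^{x_2}(y))\|^2$ to bound it by $-\beta_1|Z_s|^2 + C|x_1 - x_2|^2$.

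After this step the $-\beta_1 \E|Z_s|^2$ contribution cancels exactly with the derivative of the exponential prefactor, leaving
\begin{equation*}
\E[\e^{\beta_1 t}|Z_t|^2] \leq C|x_1-x_2|^2 \int_0^t \e^{\beta_1 s}\d s \leq \frac{C}{\beta_1}\e^{\beta_1 t}|x_1-x_2|^2.
\end{equation*}
Dividing through by $\e^{\beta_1 t}$ yields the desired uniform-in-$t$ estimate.

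I do not expect any serious obstacle: the identical initial condition $Z_0=0$ and the shape of (\hyperlink{(H2)}{H2}) are tailored precisely for this sort of bound. The only mild subtlety is ensuring that the cross coefficient structure $\sigma_2(x_i, Y_t^{x_i})$ on the diffusion side fits inside the formulation of (\hyperlink{(H2)}{H2}); this is handled by reading the assumption with the natural pairings $(x_1, y_1) = (x_1, Y_s^{x_1}(y))$ and $(x_2, y_2) = (x_2, Y_s^{x_2}(y))$, after which the argument reduces to routine Gr\"onwall-type manipulation.
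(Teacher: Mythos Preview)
Your proposal has a genuine gap hiding in the sentence you flagged as a ``mild subtlety.'' Read the statement of (\hyperlink{(H2)}{H2}) again: the diffusion term on its left-hand side is $\|\sigma_2(x,y_1)-\sigma_2(x,y_2)\|^2$ with the \emph{same} $x$ in both arguments, not $\|\sigma_2(x_1,y_1)-\sigma_2(x_2,y_2)\|^2$. So plugging in the ``natural pairings'' $(x_1,Y_s^{x_1}(y))$ and $(x_2,Y_s^{x_2}(y))$ does \emph{not} place the quadratic-variation term $\|\sigma_2(x_1,Y_s^{x_1})-\sigma_2(x_2,Y_s^{x_2})\|^2$ inside the scope of (\hyperlink{(H2)}{H2}). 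The dissipativity bound you invoke is therefore unjustified, and the remainder of the argument collapses.

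The paper's proof handles exactly this point by inserting and subtracting the intermediate value $\sigma_2(x_1,Y_s^{x_2})$: one writes
\[
\sigma_2(x_1,Y_s^{x_1})-\sigma_2(x_2,Y_s^{x_2})
=\bigl[\sigma_2(x_1,Y_s^{x_1})-\sigma_2(x_1,Y_s^{x_2})\bigr]+\bigl[\sigma_2(x_1,Y_s^{x_2})-\sigma_2(x_2,Y_s^{x_2})\bigr],
\]
and similarly splits the drift via $f_2(x_1,Y_s^{x_2})$. The first bracket (same $x$) combines with the drift to fit (\hyperlink{(H2)}{H2}); the cross terms and the second bracket are controlled by Young's inequality together with a Lipschitz-in-$x$ bound on $f_2,\sigma_2$. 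To leave room to absorb the $\epsilon|Z_s|^2$ terms coming from Young's inequality, the paper uses the weight $\e^{\beta_1 t/2}$ rather than $\e^{\beta_1 t}$. Your scheme can be salvaged along these lines, but as written it skips the one non-trivial step.
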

	\begin{proof} 
	Recall the fundamental inequality
	\begin{equation}\label{eq:yao}
		2ab\leq \epsilon a^2+\frac{1}{\epsilon}b^2
	\end{equation}
	for all $a$, $b\in\R$ and $\epsilon>0$. By \ito's  formula,  (\hyperlink{(H2)}{H2}) and  (\hyperlink{(H4)}{H4}),  we obtain
	\begin{align*}
		\begin{split}
			 \E &\left[\e^{\beta_{1}t/2}|Y_t^{x_1}(y)-Y_t^{x_2}(y)|^2\right] \\
			& = \frac{\beta_{1}}{2}\E\int_{0}^{t}\e^{\beta_{1}s/2}|Y_s^{x_1}(y)-Y_{s}^{x_2}(y)|^2\d s+ \E\int_{0}^{t} \e^{\beta_{1}s/2}\left( 2\langle Y_s^{x_1}(y)-Y_{s}^{x_2}(y),f_2(x_1,Y_s^{x_1}(y))\right.
   \\ 
   &\quad -f_2(x_2,Y_s^{x_2}(y))\rangle +\|\sigma_{2}(x_1,Y_s^{x_1}(y))-\sigma_{2}(x_2,Y_s^{x_2}(y))\|^2 	)\d s\\
			&  \leq  
			-\frac{\beta_{1}}{2}\E\int_{0}^{t}\e^{\beta_{1}s/2}|Y_s^{x_1}(y)-Y_{s}^{x_2}(y)|^2\d s+\frac{2\alpha_1}{\beta_{1}}|x_1-x_2|^2\e^{\beta_{1}t/2}\\
			&\quad+2\E\int_{0}^{t} \e^{\beta_{1}s/2}\langle Y_s^{x_1}(y)-Y_{s}^{x_2}(y),f_2(x_1,Y_s^{x_2}(y))-f_2(x_2,Y_s^{x_2}(y))\rangle\d s\\
			& \quad +2\E\int_{0}^{t} \e^{\beta_{1}s/2}\langle \sigma_{2}(x_1,Y_s^{x_1}(y))-\sigma_{2}(x_1,Y_s^{x_2}(y)),\sigma_{2}(x_1,Y_s^{x_2}(y)) -\sigma_{2}(x_2,Y_s^{x_2}(y))\rangle_{HS} \d s\\
			& \leq\frac{2}{\beta_{1}}\left(\alpha_1+\frac{\alpha_1}{\epsilon_1}+\frac{1}{\epsilon_2}\right)|x_1-x_2|^2\e^{\beta_{1}t/2}\\
               & \quad + \left(\epsilon_1+\alpha_1\epsilon_2-\frac{\beta_{1}}{2}\right)\E\int_{0}^{t}\e^{\beta_{1}s/2}|Y_s^{x_1}(y)-Y_{s}^{x_2}(y)|^2\d s.
		\end{split}
	\end{align*}
	Choosing $\epsilon_1=\beta_{1}/4$ and $\epsilon_2=\beta_{1}/4\alpha_1$ yields that $ \epsilon_1+\alpha_1\epsilon_2-\frac{\beta_{1}}{2} =0, $ so 
	\begin{equation*}\label{eq:shuijiao}
		\mathbb{E}[|Y_{t}^{x_{1}}(y)-Y_{t}^{x_{2}}(y)|^{2}] \leq C\left|x_{1}-x_{2}\right|^{2},
	\end{equation*}
	 where $C$ is a positive constant depending on $\alpha_1$ and $\beta_1$. The proof is complete.
	\end{proof}
	\subsection{The skeleton process}
	Let $P_1,P_2:\R^{2d}\to \R^d$ be projection operators on $\R^{2d}$. For any $h\in \mathbb{H},$ set $h_1:=P_1h,~h_2:=P_2h,$ then $h=(h_1,h_2)$.
 Now we introduce the skeleton process of the slow variable in  \eqref{EQ:SDE:01}, which  is defined by the following ODE,
	\begin{equation}\label{EQ06}
		\d {X}_{t}^{h}(x)=\bar{f}_{1}({X}_{t}^h(x)) \d t+\sigma_{1}({X}_{t}^h(x)) \dot{h}_1(t)\d t,\quad X_0^h(x)=x\in \R^d.
	\end{equation}
	Here, \( h\in \mathbb{H} \) is called the corresponding control function, and $ \bar{f}_{1} $ is the averaging coefficient of $ f_1 $, i.e.,
	\begin{equation}\label{EQ07}
		\bar{f}_{1}(a)=\int_{\mathbb{R}^{{d}}} f_{1}(a, {y})~\mu_{{a}}(\d {y}).
	\end{equation}
	%where \( \mu_{{x}}(\cdot) \) is the unique invariant probability measure of the frozen process  \( ({Y}_t^{x}(y)) \). 
 According to the above Lemmas \ref{THM04}-\ref{THM06}, we can deduce the following result.
	\begin{theorem}\label{THM07}
		Suppose that (\hyperlink{(H1)}{H1})-(\hyperlink{(H3)}{H3}) hold, then ODE \eqref{EQ06} has a unique solution,
		%\begin{equation}\label{EQ09}
		%	{X}_{t}^h(x) =x +\int_{0}^{t}\bar{f}_{1}({X}_{s}^h(x)) \d s+\int_{0}^{t}\sigma_{1}({X}_{s}^h(x)) \dot{h}(s) \d s.
	%	\end{equation}
	\end{theorem}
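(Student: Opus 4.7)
The plan is to verify that the averaged drift $\bar f_1$ inherits from $f_1$ both linear growth and a $\rho_{\eta_1}$-type modulus of continuity, and then to combine this with the non-Lipschitz bound on $\sigma_1$ in (\hyperlink{(H1)}{H1}) to invoke a Carath\'{e}odory existence result and a Bihari-type uniqueness argument.

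\textbf{Step 1 (Regularity of $\bar f_1$).} Using (\hyperlink{(H1)}{H1}), Remark \ref{RMK:01}, and the moment estimate \eqref{EQ25}, I would first check that $|\bar f_1(x)|\leq C(1+|x|)$. To obtain the modulus of continuity, I would exploit Lemma \ref{THM05} to write
\[
\bar f_1(x_i)=\lim_{t\to\infty}\E[f_1(x_i,Y_t^{x_i}(y))],\quad i=1,2,
\]
and then estimate, using (\hyperlink{(H1)}{H1}), the bound $xg_1(x)\leq C\rho_{\eta_1}(x)$ from Remark \ref{RMK:01}, the concavity of $\rho_{\eta_1}$, Jensen's inequality, and Lemma \ref{THM06},
\[
\E|f_1(x_1,Y_t^{x_1}(y))-f_1(x_2,Y_t^{x_2}(y))|\leq C\,\rho_{\eta_1}\!\left(|x_1-x_2|+\E|Y_t^{x_1}(y)-Y_t^{x_2}(y)|\right)\leq C\,\rho_{\eta_1}(|x_1-x_2|),
\]
where in the last step a multiplicative constant is absorbed via the sub-homogeneity $\rho_{\eta_1}(cx)\leq c\,\rho_{\eta_1}(x)$ for $c\geq 1$ (a consequence of concavity and $\rho_{\eta_1}(0)=0$). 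Letting $t\to\infty$ yields
\[
|\bar f_1(x_1)-\bar f_1(x_2)|\leq C\,\rho_{\eta_1}(|x_1-x_2|),
\]
so in particular $\bar f_1$ is continuous on $\R^d$.

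\textbf{Step 2 (Existence).} Given $h\in\mathbb{H}$, the time-dependent field $F(s,x):=\bar f_1(x)+\sigma_1(x)\dot h_1(s)$ is measurable in $s$, continuous in $x$, and satisfies $|F(s,x)|\leq C(1+|x|)(1+|\dot h_1(s)|)$, where $1+|\dot h_1|\in L^2([0,T])\subset L^1([0,T])$. A Carath\'{e}odory-type existence theorem (or equivalently an Euler polygonal approximation combined with Arzel\`a--Ascoli and an a priori bound from Gronwall's inequality) then produces an absolutely continuous solution $t\mapsto X_t^h(x)$ on the whole interval $[0,T]$.

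\textbf{Step 3 (Uniqueness).} Let $X^1,X^2$ be two solutions starting from $x$, and set $f(t):=|X_t^1-X_t^2|$. Using Step 1 and the bound $\|\sigma_1(x_1)-\sigma_1(x_2)\|\leq |x_1-x_2|\sqrt{g_2(|x_1-x_2|)}$ from (\hyperlink{(H1)}{H1}), I would obtain
\[
f(t)\leq C\int_0^t\rho_{\eta_1}(f(s))\,\d s+\int_0^t f(s)\sqrt{g_2(f(s))}\,|\dot h_1(s)|\,\d s.
\]
Since $g_2(x)\leq C\log x^{-1}$ near $0$ by \eqref{eq:xiaohe} and $g_2$ is bounded on $[1,\infty)$, a direct check gives $x\sqrt{g_2(x)}\leq C\rho_{\eta_1}(x)$ for all $x\geq 0$ (possibly after shrinking $\eta_1$), whence
\[
f(t)\leq \int_0^t q(s)\rho_{\eta_1}(f(s))\,\d s,\quad q(s):=C(1+|\dot h_1(s)|),
\]
with $\int_0^T q(s)\,\d s\leq CT+C\sqrt{T}\,\|h\|_{\mathbb{H}}<\infty$ by Cauchy--Schwarz. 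Lemma \ref{THM02}(2), applied with $f(0)=0$, then forces $f\equiv0$ on $[0,T]$.

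The main obstacle is Step 1: the $\rho_{\eta_1}$-modulus of continuity for $\bar f_1$ does not follow from any standard Lipschitz-type argument, and extracting it from the ergodic limit requires carefully interleaving the non-Lipschitz modulus of $f_1$, the quadratic Lipschitz-in-$x$ estimate for the frozen process in Lemma \ref{THM06}, Jensen's inequality, and the concavity/sub-homogeneity of $\rho_{\eta_1}$.
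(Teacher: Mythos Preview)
Your proposal is correct for Theorem~\ref{THM07} itself, but it differs from the paper's proof in two notable ways.

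First, in Step~1 you obtain $|\bar f_1(x_1)-\bar f_1(x_2)|\leq C\rho_{\eta_1}(|x_1-x_2|)$ directly via Jensen's inequality, Lemma~\ref{THM06}, and sub-homogeneity of $\rho_{\eta_1}$. The paper instead splits over the event $D=\{\sup_t|Y_t^{x_1}(y)-Y_t^{x_2}(y)|\leq\theta\}$ and tracks an arbitrary small $\epsilon$ coming from $g_1(x)=o(\log x^{-1})$; this yields the sharper conclusion that $\bar f_1$ has a modulus $\gamma(\cdot)$ satisfying the \emph{same} condition as $g_1$ in (\hyperlink{(H1)}{H1}), namely $\gamma(x)/\log x^{-1}\to 0$. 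Your weaker $O(\rho_{\eta_1})$ bound suffices for existence and uniqueness here, but the paper's $o(\rho_{\eta_1})$ bound is genuinely needed later in Proposition~\ref{THM18} (see \eqref{EQ28}), so the extra work in the paper's Step~1 is not wasted.

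Second, for existence the paper runs an explicit Euler scheme $X^n$ and proves $\{X^n\}$ is Cauchy in $C([0,T];\R^d)$ by applying the Bihari-type Lemma~\ref{THM02}(2) to $\varphi_n(t)=|X^{n+1}_t-X^n_t|$, obtaining a geometric rate. You instead appeal to Carath\'eodory (continuity in $x$, $L^1$ in $t$, linear growth) or to Euler plus Arzel\`a--Ascoli. Both routes are valid; yours is shorter and more standard, while the paper's is self-contained within the Bihari framework already set up and avoids invoking an external existence theorem. Your uniqueness argument (Step~3) is essentially the same as the paper's, which deduces uniqueness ``from a similar estimate''.
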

	\begin{proof}
%		First we give the estimation of averaging coefficient $ \bar{f}_1(x).$ In fact, we can prove that $ \bar{f}_1(x) $ is indeed a non-Lipschitz function. Recall that there exists positive constants $\eta_{1}$ and $C_2$ such that $xg_1(x)\leq C_2\rho_{\eta_{1}}(x)$ for any $x \in \mathbb{R}_{+}$. For any $ x_1,x_2\in \R^d, $ and initial value $ y\in \R^d, $  {\red profit from the above lemmas, }
%		\begin{equation*}
%			\begin{split}
%				|\bar{f}_1(x_1) -\bar{f}_1(x_2)| 
%				&  \leq \left| \int_{\R^d} {f}_1(x_1,u)\mu_{x_1}(\d u) -\E f_1(x_1,Y_t^{x_1}(y))\right| +\left| \int_{\R^d} {f}_1(x_2,u)\mu_{x_2}(\d u) -\E f_1(x_2,Y_t^{x_2}(y))\right|\\
%				&  \quad+|\E\left[ f_1(x_1,Y_t^{x_1}(y))-f_1(x_2,Y_t^{x_2}(y))\right]|\\
%				&  \leq C_2\rho_{\eta_1}( C_3\e^{-\beta_{1}t/2}(1+|x_2|+|y|)) +C_2\rho_{\eta_1}( C_3\e^{-\beta_{1}t/2}(1+|x_1|+|y|)) \\
%				&  \quad +C_2\E \rho_{\eta_1} (|x_1-x_2|+|Y_t^{x_1}(y)-Y_t^{x_2}(y)|)\\
%				& \leq C_2\rho_{\eta_1}( C_3\e^{-\beta_{1}t/2}(1+|x_2|+|y|)) +C_2\rho_{\eta_1}( C_3\e^{-\beta_{1}t/2}(1+|x_1|+|y|)) \\
%				&   \quad+C_2\rho_{\eta_1} ( |x_1-x_2|+\E |Y_t^{x_1}(y)-Y_t^{x_2}(y)|)\\
%				& \leq C_2\rho_{\eta_1}( C_3\e^{-\beta_{1}t/2}(1+|x_2|+|y|)) +C_2\rho_{\eta_1}( C_3\e^{-\beta_{1}t/2}(1+|x_1|+|y|)) \\
%				&\quad+C_2\rho_{\eta_1} ( C_4|x_1-x_2|).
%			\end{split}
%		\end{equation*}
%	Notice that $ \lim_{x\downarrow 0} \rho_{\eta}(x) =0$. Then letting $ t\to\infty$,  we obtain that
%	\begin{equation}\label{EQ27}
%		|\bar{f}_1(x_1) -\bar{f}_1(x_2)| \leq C_2\rho_{\eta_1} ( C_4|x_1-x_2|).
%	\end{equation}
First, we give the estimation of the averaging coefficient $ \bar{f}_1.$	
%In specific, we claim that $ \bar{f}_1 $ is indeed a non-Lipschitz function.  
Set 
\begin{equation}\label{EQ53}
	\gamma(a) := \sup_{0<|x_1-x_2|\leq a} \frac{|\bar{f}_1(x_1)-\bar{f}_1(x_2)|}{|x_1-x_2|}
\end{equation}
for any $a>0$ and $\gamma(0)=0$. By \eqref{eq:xiaohe}, for any $ \epsilon>0, $ there exists a positive constant $ \theta $ such that $ g_1(x)\leq \epsilon\log x^{-1}$ for any $ 0< x\leq2\theta$. 
For any $ x_1,x_2\in \R^d$ and initial value $ y\in \R^d, $  set $ D:= \{\sup_{t\geq 0}|Y_t^{x_1}(y)-Y_t^{x_2}(y)|\leq \theta\}.$ Profit from Lemmas \ref{THM05}-\ref{THM06},  we have that
%\begin{align*}
%				|\bar{f}_1(x_1) -\bar{f}_1(x_2)| & \leq \left| \int_{\R^d} {f}_1(x_1,z)\mu_{x_1}(\d z) -\E f_1(x_1,Y_t^{x_1}(y))\right| +\left| \int_{\R^d} {f}_1(x_2,z)\mu_{x_2}(\d z) -\E f_1(x_2,Y_t^{x_2}(y))\right|\\
%	& \qquad +|\E\left[ f_1(x_1,Y_t^{x_1}(y))-f_1(x_2,Y_t^{x_2}(y))\right]|\\
%	&  \leq \rho_{\eta}( Ce^{-\beta_{1}s/2}(1+|x_2|+|y|)) +\rho_{\eta}( Ce^{-\beta_{1}s/2}(1+|x_1|+|y|)) \\
%	& \qquad +  \E \sqrt{|x_1-x_2|^2+|Y_t^{x_1}(y)-Y_t^{x_2}(y)|^2} \cdot g_1(\sqrt{|x_1-x_2|^2+|Y_t^{x_1}(y)-Y_t^{x_2}(y)|^2})\\
%	& \leq \rho_{\eta}( Ce^{-\beta_{1}s/2}(1+|x_2|+|y|)) +\rho_{\eta}( Ce^{-\beta_{1}s/2}(1+|x_1|+|y|)) \\
%	&  \qquad +\E\rho_{\eta_1} ( \sqrt{|x_1-x_2|^2+|Y_t^{x_1}(y)-Y_t^{x_2}(y)|^2})\\
%	& \leq \rho_{\eta}( Ce^{-\beta_{1}s/2}(1+|x_2|+|y|)) +\rho_{\eta}( Ce^{-\beta_{1}s/2}(1+|x_1|+|y|)) \\
%	&  \qquad +\rho_{\eta_1} ( \E \sqrt{|x_1-x_2|^2+|Y_t^{x_1}(y)-Y_t^{x_2}(y)|^2})\\
%	& \leq \rho_{\eta}( Ce^{-\beta_{1}s/2}(1+|x_2|+|y|)) +\rho_{\eta}( Ce^{-\beta_{1}s/2}(1+|x_1|+|y|)) \\
%	&  \qquad +\rho_{\eta_1} ( C_2|x_1-x_2|),
%\end{align*}
\begin{align*}
	 |\bar{f}_1&(x_1) -\bar{f}_1(x_2)| \\
 &\leq \left| \int_{\R^d} {f}_1(x_1,u)\mu_{x_1}(\d u) -\E [f_1(x_1,Y_t^{x_1}(y))]\right| \\
 &\quad+\left| \int_{\R^d} {f}_1(x_2,u)\mu_{x_2}(\d u) -\E [f_1(x_2,Y_t^{x_2}(y))]\right|\\
	& \quad +|\E\left[ f_1(x_1,Y_t^{x_1}(y))-f_1(x_2,Y_t^{x_2}(y))\right]\cdot (\1_D+\1_{D^c})|\\
	&  \leq C \rho_{\eta_1}\left( \e^{-\beta_{1}t/2}(1+|x_1|+|y|)\right) +C\rho_{\eta_1}\left( \e^{-\beta_{1}t/2}(1+|x_2|+|y|)\right) \\
	& \quad +  \E [\sqrt{|x_1-x_2|^2+|Y_t^{x_1}(y)-Y_t^{x_2}(y)|^2} \cdot g_1(\sqrt{|x_1-x_2|^2+|Y_t^{x_1}(y)-Y_t^{x_2}(y)|^2}) \\
        &\qquad\quad  \cdot (\1_{D}+\1_{D^c})]\\
	& \leq C \rho_{\eta_1}\left( \e^{-\beta_{1}t/2}(1+|x_1|+|y|)\right) +C\rho_{\eta_1}\left( \e^{-\beta_{1}t/2}(1+|x_2|+|y|)\right) \\
	&  \quad +\epsilon\E\left[\rho_{\eta_1} \left( \sqrt{|x_1-x_2|^2+|Y_t^{x_1}(y)-Y_t^{x_2}(y)|^2}\right)\right]\\
        &\quad+C\E [\sqrt{|x_1-x_2|^2+|Y_t^{x_1}(y)-Y_t^{x_2}(y)|^2}] \\
	& \leq C \rho_{\eta_1}\left( \e^{-\beta_{1}t/2}(1+|x_1|+|y|)\right) +\rho_{\eta_1}\left(\e^{-\beta_{1}t/2}(1+|x_2|+|y|)\right) \\
	&  \quad +\epsilon\rho_{\eta_1} \left( \E \sqrt{|x_1-x_2|^2+|Y_t^{x_1}(y)-Y_t^{x_2}(y)|^2}\right)+C(|x_1-x_2|+\E|Y_t^{x_1}(y)-Y_t^{x_2}(y)|)\\
	& \leq C\rho_{\eta_1}\left( \e^{-\beta_{1}t/2}(1+|x_1|+|y|)\right) +C\rho_{\eta_1}\left( \e^{-\beta_{1}t/2}(1+|x_2|+|y|)\right)  +C\epsilon\rho_{\eta_1} ( |x_1-x_2|).
\end{align*}
Notice that $ \lim_{x\downarrow 0} \rho_{\eta_1}(x) =0$, let $ t\to\infty$, 
%it follows from \eqref{EQ53} that for any small $ \epsilon>0 $,  when $ |x_1-x_2|\leq \theta, $ 
\begin{equation}\label{EQ67}
	|\bar{f}_1(x_1) -\bar{f}_1(x_2)| \leq 2\epsilon\rho_{\eta_1} ( |x_1-x_2|).
\end{equation}
Since $ \gamma(\cdot) $ is defined by \eqref{EQ53}, so indeed we have that
\begin{equation}\label{EQ27}
	|\bar{f}_1(x_1) -\bar{f}_1(x_2)| \leq |x_1-x_2|\gamma(|x_1-x_2|)\leq 2\epsilon\rho_{\eta_1} ( |x_1-x_2|).
\end{equation}
Due to the linear growth of  $\rho_{\eta_1}(\cdot)$ on $ [1,+\infty),$ we can see that  $ \gamma(\cdot)$ is also a  positive 
%\textcolor{red}{continuous} 
function, {bounded on $ [1,+\infty)$}   and satisfing
\[ 
\lim _{x \downarrow 0} \frac{\gamma(x)}{\log x^{-1}}= 0.
\]
	 
	 Next, for any given $T>0$, we use the Euler scheme to prove that the ODE \eqref{EQ06} has a unique solution denoted by $\{X_t^h(x)\}_{t\in [0,T]}$. Let \( X_{\cdot}^{n} \) be the solution of the following recursive system:
	 \[
	 X_{t}^{n}=x+\int_{0}^{t} \bar{f}_1(X_{s_{n}}^{n}) \d s+\int_{0}^{t}\sigma_1(X_{s_{n}}^{n})\dot{h}_1(s) \d s,
	 \]
	 where \( s_{n}=\left[s 2^{n}\right] / 2^{n} \).
	 It is not hard to find that there is a constant \( C \) such that for any \( t \in[0, T], \)
	\begin{equation}\label{EQ26}
		 |X_{t}^{n}-X_{t_{n}}^{n}| \leq C 2^{-n}.
	\end{equation}
	 Set
	 \[
	 \varphi_{n}(t):=|X_{t}^{n+1}-X_{t}^{n}|.
	 \]
	 It follows from \eqref{eq:daxue} that there exist two positive constants $C$ and  $\eta_{2}$ such that $xg_2(x)\leq C\rho_{\eta_{2}}(x)$ for any $x \in \mathbb{R}_{+}$. Now by \eqref{EQ27} and \eqref{EQ26} we have
	 \begin{align*}
	 	\varphi_{n}(t) & \leq\left|\int_{0}^{t}(\sigma_1(X_{s_{n+1}}^{n+1})-\sigma_1(X_{s_{n}}^{n}))\dot{h}_1(s) \d s\right|+\left|\int_{0}^{t} \bar{f}_1(X_{s_{n+1}}^{n+1})-\bar{f}_1(X_{s_{n}}^{n}) \d s\right| \\
	 	& \leq 2\epsilon \rho_{\eta_1}(C2^{-n})+\left|\int_{0}^{t}(\sigma_1(X_{s}^{n+1})-\sigma_1(X_{s}^{n})) \dot{h}_1(s)\d s\right|+\left|\int_{0}^{t} \bar{f}_1(X_{s}^{n+1})-\bar{f}_1(X_{s}^{n}) \d s\right| \\
	 	& \leq 2\epsilon \rho_{\eta_1}(C2^{-n})+\int_{0}^{t} \sqrt{C \varphi_{n}(s) \rho_{\eta_2}(\varphi_{n}(s))} \cdot|\dot{h}_1(s)| \d s+\int_{0}^{t} 2\epsilon\rho_{\eta_1}( \varphi_{n}(s)) \d s \\
	 	& \leq 2\epsilon \rho_{\eta_1}(2^{-n})+C\int_{0}^{t} \rho_{\eta^{\prime}}(\varphi_{n}(s)) \cdot (1+|\dot{h}_1(s)|) \d s
	 \end{align*}
	 where \( 0<\eta^{\prime}<\eta_1\wedge\eta_2 \) is small enough. By Lemma \ref{THM02}-(2), we obtain that 
	 \[
	 \begin{aligned}
	 	C\varphi_{n}(t) &\leq[2C\epsilon \rho_{\eta_1}(C2^{-n})]^{\exp \left\{-C \int_{0}^{t}(1+|\dot{h}_1(s)|) \d s\right\}} \\
	 	             & \leq[2C\epsilon \rho_{\eta_1}(C2^{-n})]^{\exp \left\{-C T\left(1+\|h\|_{\mathbb{H}}\right)\right\}} \\
	 	             & \leq C 2^{-\alpha n},
	 \end{aligned}
	 \]
	hold for \( n \) sufficiently large and some \( \alpha>0 \).
	 Hence \( X_{n}(t) \) converges uniformly on \( [0, T] \) and taking the limit we obtain the existence. The uniqueness is deduced from a similar estimate. The proof is complete.
	\end{proof}

	\subsection{The controlled system}
	For any $ h^{(\varepsilon,\delta)}=(P_1h^{(\varepsilon,\delta)},P_2h^{(\varepsilon,\delta)})=(h_1^{(\varepsilon,\delta)},h_2^{(\varepsilon,\delta)}) \in \mathcal{A},$ the following is called the controlled system corresponding  \eqref{EQ:SDE:01},
	\begin{equation}\label{EQ12}
		\left\{\begin{aligned}
			\d \hat{X}_{t}^{(\varepsilon, \delta)}&=  f_{1}(\hat{X}_{t}^{(\varepsilon, \delta)}, \hat{Y}_{t}^{(\varepsilon, \delta)}) \d t+\sigma_{1}(\hat{X}_{t}^{(\varepsilon, \delta)}) \dot{h}_1^{(\varepsilon, \delta)}(t) \d t+\sqrt{\varepsilon} \sigma_{1}(\hat{X}_{t}^{(\varepsilon, \delta)}) \d W_{t}^1, \\
			\delta \d \hat{Y}_{t}^{(\varepsilon, \delta)}& =  f_{2}(\hat{X}_{t}^{(\varepsilon, \delta)}, \hat{Y}_{t}^{(\varepsilon, \delta)}) \d t+\sqrt{\frac{\delta}{\varepsilon}} \sigma_{2}(\hat{X}_{t}^{(\varepsilon, \delta)}, \hat{Y}_{t}^{(\varepsilon, \delta)}) \dot{h}_2^{(\varepsilon, \delta)}(t) \d t+\sqrt{\delta} \sigma_{2}(\hat{X}_{t}^{(\varepsilon, \delta)}, \hat{Y}_{t}^{(\varepsilon, \delta)}) \d W_{t}^2.
		\end{aligned}\right.
	\end{equation}	
	 %Generalizing the equation \eqref{EQ12} slightly, we have that for any $ h\in \mathcal{A}, $ 
	%\begin{equation}\label{EQ23}
	%	\left\{\begin{aligned}
	%		\hat{X}_{t}^h& = x+\int_{0}^{t} f_{1}(\hat{X}_{s}^h, \hat{Y}_{s}^h) \d s+\int_{0}^{t}\sigma_{1}(\hat{X}_{s}^h) \dot{h}(s)\d s+\int_{0}^{t}\sigma_{1}(\hat{X}_{s}^h) \d W_{s} \\
	%		\hat{Y}_{t}^h& =  y+ \int_{0}^{t}f_{2}(\hat{X}_{s}^h, \hat{Y}_{s}^h) \d s+\int_{0}^{t}\sigma_{2}(\hat{X}_{s}^h, \hat{Y}_{s}^h) \dot{h}(s) \d s+\int_{0}^{t}\sigma_{2}(\hat{X}_{s}^h, \hat{Y}_{s}^h) \d W_{s},
	%	\end{aligned}\right.
	%\end{equation}
	%Obviously, for the same $ f_i,\sigma_i,i=1,2, $ system \eqref{EQ12} and \eqref{EQ23}  have same well-posedness.
 Following the same argument with Ren and Zhang \cite[Theorem 3.6]{REN2005}, one can deduce the next result.
	\begin{lemma}\label{THM120}
		Suppose that   (\hyperlink{(H1)}{H1})-(\hyperlink{(H3)}{H3}) hold, then the system \eqref{EQ12}~ has a unique strong solution $ \{(\hat{X}_t^{(\varepsilon,\delta)},\hat{Y}_t^{(\varepsilon,\delta)})\}_{t\geq 0}$ with the initial value $(\hat{X}_0^{(\varepsilon,\delta)},\hat{Y}_0^{(\varepsilon,\delta)})=(x,y)$.
	\end{lemma}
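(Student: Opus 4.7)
The plan is to adapt the non-Lipschitz strong well-posedness argument of Ren and Zhang \cite{REN2005} (their Theorem 3.6) to the present two-time-scale, fully coupled, controlled setting. The genuinely new ingredients are (i) the control drifts $\sigma_1(\hat X)\dot h_1^{(\varepsilon,\delta)}$ and $\sqrt{\delta/\varepsilon}\sigma_2(\hat X,\hat Y)\dot h_2^{(\varepsilon,\delta)}$, (ii) the coupling through $f_1(\hat X,\hat Y)$ and $f_2(\hat X,\hat Y)$, and (iii) the fact that $f_2,\sigma_2$ are only locally Lipschitz.

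First I would construct a local solution by introducing stopping times $\tau_R:=\inf\{t\geq 0: |\hat X_t|\vee |\hat Y_t|\geq R\}$ together with globally Lipschitz truncations of $f_2,\sigma_2$. Given any slow path, the truncated fast equation is a classical Lipschitz SDE whose extra drift from $\dot h_2$ is integrable since $\dot h_2\in L^2([0,T])$ a.s.; the slow equation retains its non-Lipschitz structure and is solved by running the same Euler scheme as in the proof of Theorem \ref{THM07}, using $\int_0^T|\dot h_1^{(\varepsilon,\delta)}(s)|\d s\leq \sqrt T\,\|h^{(\varepsilon,\delta)}\|_{\mathbb H}<\infty$ a.s.\ to keep the control term harmless and invoking Lemma \ref{THM02}(2) to obtain uniform convergence of the iterates on $[0,T\wedge \tau_R]$. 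To remove the truncation I would derive uniform second moment bounds via It\^o's formula: the dissipativity bound $2\langle y,f_2(x,y)\rangle+\|\sigma_2(x,y)\|^2\leq -\beta_2|y|^2+\gamma(1+|x|^2)$, together with Assumption (\hyperlink{(H3)}{H3}) and Young's inequality to absorb the $\dot h_2$ term, gives $\E|\hat Y_t^{(\varepsilon,\delta)}|^2\leq C(1+|y|^2+\int_0^t \E|\hat X_s^{(\varepsilon,\delta)}|^2\d s)$, and the linear growth from Remark \ref{RMK:01} closes a Gronwall estimate on $\E\sup_{s\leq t}|\hat X_s^{(\varepsilon,\delta)}|^2$; hence $\tau_R\uparrow\infty$ almost surely.

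For uniqueness, I would pair two solutions $(\hat X^i,\hat Y^i)$ with the same initial data and same control. Applying It\^o's formula to $e^{\beta_1 t/\delta}|\hat Y^1_t-\hat Y^2_t|^2$ and using (\hyperlink{(H2)}{H2}) with (\hyperlink{(H3)}{H3}) to absorb the $\dot h_2$ cross-term, I would obtain $\E|\hat Y^1_t-\hat Y^2_t|^2 \leq (C/\delta)\int_0^t \E|\hat X^1_s-\hat X^2_s|^2\d s$. For the slow difference, Assumption (\hyperlink{(H1)}{H1}) combined with Lemmas \ref{THM03}(2) and \ref{LEM:CONCAVE:01} turns the estimate on $|\hat X^1_t-\hat X^2_t|^2$ into an inequality of the form
\[
f(t)\leq \int_0^t q(s)\rho_{\eta}(f(s))\d s, \qquad f(t):=\E\sup_{s\leq t}|\hat X^1_s-\hat X^2_s|^2,
\]
with $q(s)\leq C(1+|\dot h_1(s)|)\in L^1([0,T])$ almost surely. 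Since $f(0)=0$, Lemma \ref{THM02}(2) forces $f\equiv 0$, and the same identity for $\hat Y$ follows from the fast estimate.

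The main obstacle is precisely closing this last Bihari estimate: the controls $\dot h_i$ being only $L^2$ in time, one must arrange that the coefficient $q(s)$ stays integrable without sacrificing the concavity required by $\rho_\eta$. This is what makes the sub-logarithmic growth encoded in $\rho_\eta$, combined with Lemma \ref{LEM:CONCAVE:01}, indispensable: the $\sqrt{g_2}$ factor arising from the $\sigma_1$-difference estimate is absorbed into $\rho_\eta$ via (\hyperlink{(H1)}{H1}) and then paired with $|\dot h_1(s)|$ to yield an $L^1$ bound, exactly as in the closure of the Euler scheme already used in the proof of Theorem \ref{THM07}.
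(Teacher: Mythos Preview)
The paper does not actually prove this lemma: it simply refers the reader to Ren and Zhang \cite[Theorem~3.6]{REN2005} and asserts that the same argument goes through. Your sketch is precisely such an adaptation---local solutions via truncation and the Euler scheme of Theorem~\ref{THM07}, non-explosion from the moment bounds (the very ones later recorded as Lemma~\ref{THM19}), and pathwise uniqueness through a Bihari inequality---so it is fully in line with, and considerably more explicit than, what the paper itself offers.
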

	
	Hereafter, given a family \( \{h^{(\varepsilon,\delta)}, \varepsilon>\delta>0\} \subset \mathcal{A}_{N} \), let \( (\hat{X}_{\cdot}^{(\varepsilon, \delta)},\hat{Y}_{\cdot}^{(\varepsilon, \delta)}) \) be the corresponding solution to the system \eqref{EQ12}.
Moreover, we list  some important prior estimations for the controlled system $ (\hat{X}_{\cdot}^{(\varepsilon,\delta)},\hat{Y}_{\cdot}^{(\varepsilon,\delta)})  $, which are core estimations to derive the LDP for stochastic flow by the method of weak convergence. 
%	For the controlled system $ (\hat{X}^{(\varepsilon,\delta)},\hat{Y}^{(\varepsilon,\delta)}) $, we have the follow assertions. 
	\begin{lemma}\label{THM19}
	Suppose that (\hyperlink{(H1)}{H1})-(\hyperlink{(H3)}{H3}) hold. Let \( N\geq 1 \) , then there exists a positive constant \( C=C(N,T) \) such that for every \( h^{(\varepsilon, \delta)}\in \mathcal{A}_{N} \), we have
		\[
		\mathbb{E}[\sup_{t\in [0,T]}|\hat{X}_t^{(\varepsilon, \delta)}|^{2}] \leq C(1+|x|^2+|y|^2),
		\]
  and 
  \[
		\sup_{t\in [0,T]}\mathbb{E}[|\hat{Y}_t^{(\varepsilon, \delta)}|^{2}] \leq C(1+|x|^2+|y|^2).
  \]
	\end{lemma}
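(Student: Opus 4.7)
The plan is an It\^o--BDG--Gronwall bootstrap. Using the dissipative consequence of (H2) (noted in the Remark following it) and the $y$-uniform linear growth of $\sigma_2$ from (H3), I first bound $\mathbb{E}|\hat Y_t^{(\varepsilon,\delta)}|^2$ against the slow trajectory, then close the slow estimate via Gronwall. Specifically, applying It\^o's formula to $|\hat Y_t^{(\varepsilon,\delta)}|^2$ via the second line of \eqref{EQ12}, the drift plus It\^o-correction is dominated by $-\beta_2|\hat Y_t|^2+\gamma(1+|\hat X_t|^2)$, while the control cross-term is bounded by Young's inequality and (H3),
\[
2\sqrt{\delta/\varepsilon}\,|\hat Y_t|\,\|\sigma_2\|\,|\dot h_2^{(\varepsilon,\delta)}|\leq \tfrac{\beta_2}{2}|\hat Y_t|^2+\tfrac{2C\delta}{\beta_2\varepsilon}(1+|\hat X_t|^2)|\dot h_2^{(\varepsilon,\delta)}|^2.
\]
Using $\delta/\varepsilon\leq 1$ (valid since $\delta=o(\varepsilon)$), integrating the resulting bound on $\delta\,d|\hat Y_t|^2$, taking expectation after the standard localisation, and invoking $\int_0^T|\dot h_2^{(\varepsilon,\delta)}|^2 ds\leq N^2$ a.s., I obtain
\[
\delta\,\mathbb E|\hat Y_t|^2+\tfrac{\beta_2}{2}\int_0^t\mathbb E|\hat Y_s|^2 ds\leq \delta|y|^2+C_{N,T}\bigl(1+\mathbb E\sup_{s\leq t}|\hat X_s|^2\bigr). \qquad(\star)
\]

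For the slow bound, I apply Cauchy--Schwarz on the drift/control integrals and BDG on the stochastic integral in the first line of \eqref{EQ12}. The linear growth $|f_1(x,y)|+\|\sigma_1(x)\|\leq C(1+|x|+|y|)$ follows from (H1) together with the boundedness of $g_1,g_2$ on $[1,\infty)$ (see Remark \ref{RMK:01}), yielding
\[
\mathbb E\sup_{r\leq t}|\hat X_r|^2\leq C|x|^2+C_{N,T}\int_0^t\bigl(1+\mathbb E\sup_{r\leq s}|\hat X_r|^2+\mathbb E|\hat Y_s|^2\bigr)ds,
\]
where the $\sqrt\varepsilon$ in front of the It\^o integral is absorbed using $\varepsilon\leq 1$. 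Substituting the time-integrated consequence $\int_0^t\mathbb E|\hat Y_s|^2 ds\leq C_{N,T}(1+|y|^2+\mathbb E\sup_{s\leq t}|\hat X_s|^2)$ of $(\star)$ produces a closed inequality
\[
\mathbb E\sup_{r\leq t}|\hat X_r|^2\leq C_{N,T}(1+|x|^2+|y|^2)+C_{N,T}\int_0^t\mathbb E\sup_{r\leq s}|\hat X_r|^2\, ds,
\]
so Gronwall's lemma delivers the first assertion. Feeding this back into $(\star)$ gives the second.

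The main obstacle is handling the amplified multiplicative control on the fast equation, whose coefficient $\sqrt{\delta/\varepsilon}\sigma_2$ becomes of order $1/\sqrt{\delta\varepsilon}$ in the $d\hat Y$-equation. The delicate point is to tune Young's inequality so that the $|\hat Y|^2$ absorption matches the $\beta_2/\delta$ dissipation scale, leaving a residual control weight $\delta/(\beta_2\varepsilon)\|\sigma_2\|^2|\dot h_2|^2$; one then exploits $\delta/\varepsilon\leq 1$ together with $\|h^{(\varepsilon,\delta)}\|_{\mathbb H}^2\leq N^2$ to keep the constant $C_{N,T}$ in $(\star)$ independent of $\varepsilon,\delta$. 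It is important to work with the time-integrated inequality $(\star)$ rather than attempt pointwise exponential-decay Gronwall bounds, since $|\dot h_2^{(\varepsilon,\delta)}|^2$ is only $L^1$-in-time controlled and concentrated spikes would otherwise break uniformity.
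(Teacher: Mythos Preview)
Your overall bootstrap (It\^o on $|\hat Y|^2$ using the dissipativity consequence of (H2), BDG/Cauchy--Schwarz on $\hat X$, then Gronwall) is exactly the paper's strategy, but two steps do not go through as written.

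\emph{Closing the slow estimate.} From $(\star)$ you extract $\int_0^t\mathbb{E}|\hat Y_s|^2\,ds\le C_{N,T}\bigl(1+|y|^2+\mathbb{E}\sup_{s\le t}|\hat X_s|^2\bigr)$, and substituting this into the slow inequality leaves a term $C_{N,T}\cdot\mathbb{E}\sup_{s\le t}|\hat X_s|^2$ on the right that is \emph{not} under a time integral and whose coefficient is not known to be $<1$; Gronwall therefore does not apply. The repair is easy: when deriving $(\star)$, keep the drift forcing $\gamma\int_0^t(1+\mathbb{E}|\hat X_s|^2)\,ds$ in integral form (i.e.\ bound it by $\gamma\int_0^t(1+\mathbb{E}\sup_{r\le s}|\hat X_r|^2)\,ds$) rather than by $\gamma T(1+\mathbb{E}\sup_{s\le t}|\hat X_s|^2)$. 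Then the only residual non-integral $\hat X$-term comes from the control and carries the prefactor $\delta/\varepsilon$; here you must actually use $\delta/\varepsilon\to 0$ (not merely $\le 1$) to absorb it on the left for all sufficiently small $\varepsilon$, after which Gronwall closes. The paper avoids this absorption issue by first deriving the \emph{pointwise} bound $\mathbb{E}|\hat Y_t|^2\le C(1+|y|^2)+C\,\mathbb{E}\sup_{s\le t}|\hat X_s|^2$ and only then integrating in $t$, which lands directly inside the Gronwall integral.

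\emph{The pointwise fast bound.} ``Feeding back into $(\star)$'' does not deliver $\sup_t\mathbb{E}|\hat Y_t|^2\le C$: the left side of $(\star)$ carries only $\delta\,\mathbb{E}|\hat Y_t|^2$, so dividing through produces a $1/\delta$ factor. Your time-integrated inequality yields only $\int_0^T\mathbb{E}|\hat Y_s|^2\,ds\le C$, which (after the fix above) suffices for the first assertion but not the second. To obtain the second assertion one must argue pointwise---this is the paper's route, and it requires precisely the exponential-decay Gronwall against the rate $\beta_2/(2\delta)$ that your final paragraph argues against. Your observation that spiky $\dot h_2$ makes pointwise uniformity delicate is well taken, so this step deserves care in either approach; but as written, your argument simply does not address the second conclusion of the lemma.
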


\begin{proof}
    Recall that \( f_1\) and \(\sigma_1 \) satisfy the linearity growth condition. By the $C_r$-inequality, BDG's inequality and the definition  of $\mathcal{A}_{N}$, we have
       \begin{align*}
        \E[\sup_{t \in[0, T]}|\hat{X}_{t}^{(\varepsilon, \delta)}|^{2}]& \leq C|x|^{2}+\E\left[\sup _{t \in[0, T]}\left|\int_{0}^{t} f_{1}(\hat{X}_{s}^{(\varepsilon, \delta)}, \hat{Y}_{s}^{(\varepsilon, \delta)}) \d s\right|^{2}\right] \\
        &\quad+C \E\left[\sup _{t \in[0, T]}\left|\int_{0}^{t} \sigma_{1}(\hat{X}_{s}^{(\varepsilon, \delta)}) \dot{h}_1^{(\varepsilon, \delta)}(s) \d s\right|^{2}\right] \\
        &\quad+C \varepsilon \E\left[\sup _{t \in[0, T]}\left|\int_{0}^{t} \sigma_{1}(\hat{X}_{s}^{(\varepsilon, \delta)}) \d W_{s}^1\right|^{2}\right] \\
        &\leq C|x|^{2}+C \E\left[\int_{0}^{T}\left|f_{1}(\hat{X}_{t}^{(\varepsilon, \delta)}, \hat{Y}_{t}^{(\varepsilon, \delta)})\right|^{2} \d t\right] \\
        &\quad+C \E\left[\int_{0}^{T}\|\sigma_{1}(\hat{X}_{t}^{(\varepsilon, \delta)})\|^{2} \d t \cdot \int_{0}^{T}|\dot{h}_1^{(\varepsilon, \delta)}(t)|^{2} \d t\right] \\
        &\quad+C \varepsilon \E\left[\int_{0}^{T}\|\sigma_{1}(\hat{X}_{t}^{(\varepsilon, \delta)})\|^{2} \d t\right] \\
        &\leq C|x|^{2}+C \E\left[\int_{0}^{T}|\hat{X}_{t}^{(\varepsilon . \delta)}|^{2} \d t\right]+C \E\left[\int_{0}^{T}|\hat{Y}_{t}^{(\varepsilon,\delta)}|^{2} \d t\right] \\
        &\leq C|x|^{2}+C \int_{0}^{T} \E[\sup _{s \in[0, t]}|\hat{X}_{s}|^{2}] \d t +C \E\left[\int_{0}^{T}|\hat{Y}_{t}^{(\varepsilon,\delta)}|^{2} \d t\right]. \\
       \end{align*}
   Applying \ito's formula to $|\hat{Y}_{t}^{(\varepsilon, \delta)}|^{2}$, it follows from (\hyperlink{(H3)}{H3}) that for any $\epsilon>0$,
   \begin{align*}
    \E[|\hat{Y}_{t}^{(\varepsilon, \delta)}|^{2}]&=  |y|^{2}+\frac{1}{\delta} \E\left[\int_{0}^{t} 2\langle\hat{Y}_{s}^{(\varepsilon, \delta)}, f_{2}(\hat{X}_{s}^{(\varepsilon, \delta)}, \hat{Y}_{s}^{(\varepsilon, \delta)})\rangle +\|\sigma_{2}(\hat{X}_{s}^{(\varepsilon, \delta)}, \hat{Y}_{s}^{(\varepsilon, \delta)})\|^{2} \d s\right] \\
    & \quad+\sqrt{\frac{1}{\varepsilon \delta}} \E\left[\int_{0}^{t} 2\langle\hat{Y}_{s}^{(\varepsilon, \delta)}, \sigma_{2}(\hat{X}_{s}^{(\varepsilon, \delta)}, \hat{Y}_{s}^{(\varepsilon, \delta)}) \dot{h}_2^{(\varepsilon, \delta)}(s)\rangle\d s\right]. \\
    &\leq  |y|^{2}+\left(\sqrt{\frac{1}{\varepsilon \delta}} \epsilon-\frac{\beta_{2}}{\delta}\right) \E\left[\int_{0}^{t}|\hat{Y}_{s}^{(\varepsilon, \delta)}|^{2} \d s\right] +\frac{\gamma}{\delta} \E\left[\int_{0}^{t}(1+|\hat{X}_{s}^{(\varepsilon, \delta)}|^{2}) \d s\right] \\
    &\quad +\frac{1}{\epsilon} \sqrt{\frac{1}{\varepsilon \delta}} \E\left[\int_{0}^{t}\|\sigma_{2}(\hat{X}_{s}^{(\varepsilon, \delta)}, \hat{Y}_{s}^{(\varepsilon, \delta)})\|^{2}\cdot|\dot{h}_2^{(\varepsilon, \delta)}(s)|^{2} \d s\right] \\
    & \leq  C(1+|y|^{2})+\left(\sqrt{\frac{1}{\varepsilon \delta}} \epsilon-\frac{\beta_{2}}{\delta}\right) \E\left[\int_{0}^{t}|\hat{Y}_{s}^{(\varepsilon, \delta)}|^{2} \d s\right]  +C \E[\sup _{s \in[0, t]}|\hat{X}_{s}^{(\varepsilon, \delta)}|^{2}],
\end{align*}
  where (\hyperlink{(H4)}{H4}) has been used in the last inequality. Letting ${\epsilon}$ be sufficiently small such that ${\sqrt{\frac{1}{\varepsilon \delta}} \epsilon-\frac{\beta_{2}}{\delta} \leq 0}$, we have
\[
\E[|\hat{Y}_{t}^{(\varepsilon, \delta)}|^{2}] \leqslant C(1+|y|^{2})+C \E[\sup _{s \in[0, t]}|\hat{X}_{s}^{(\varepsilon, \delta)}|^{2}] ,
\]
which implies
\[
\E[\sup _{t \in[0, T]}|\hat{X}_{t}^{(\varepsilon, \delta)}|^{2}] \leq C(1+|x|^{2}+|y|^{2})+C \int_{0}^{T} \E[\sup _{s \in[0, t]}|\hat{X}_{s}^{(\varepsilon, \delta)}|^{2}] \d t.
\]
By \gron's inequality,
\[
\E[\sup _{t \in[0, T]}|\hat{X}_{t}^{(\varepsilon, \delta)}|^{2}] \leqslant C(1+|x|^{2}+|y|^{2}).
\]
Moreover,
\[
\E[|\hat{Y}_{t}^{(\varepsilon, \delta)}|^{2}] \leqslant C(1+|x|^2+|y|^{2}),
\]
for any $t\geq 0$. The proof is complete.
\end{proof}
\begin{lemma}\label{THM17}
	 Suppose that (\hyperlink{(H1)}{H1})-(\hyperlink{(H4)}{H4}) hold.  Let \( N\geq 1 \) and \( p \geq 2 \), then there is a positive constant \( C=C(N,T,x,y)\) independent of \( ~\varepsilon \) such that
		\[
		\mathbb{E}[|\hat{X}_{t}^{(\varepsilon,\delta)}-\hat{X}_{s}^{(\varepsilon,\delta)}|^{2}] \leq C|t-s|,
		\]
		for all \( s\leq t \in[0, T] \).
	\end{lemma}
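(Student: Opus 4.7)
The plan is to write $\hat{X}_t^{(\varepsilon,\delta)} - \hat{X}_s^{(\varepsilon,\delta)}$ as the sum of three terms coming from \eqref{EQ12}, namely the drift $\int_s^t f_1(\hat{X}_r^{(\varepsilon,\delta)}, \hat{Y}_r^{(\varepsilon,\delta)})\,\d r$, the control term $\int_s^t \sigma_1(\hat{X}_r^{(\varepsilon,\delta)})\dot{h}_1^{(\varepsilon,\delta)}(r)\,\d r$, and the stochastic integral $\sqrt{\varepsilon}\int_s^t \sigma_1(\hat{X}_r^{(\varepsilon,\delta)})\,\d W_r^1$. Then I would apply the $C_r$-inequality to reduce the estimation of $\E[|\hat{X}_t^{(\varepsilon,\delta)} - \hat{X}_s^{(\varepsilon,\delta)}|^2]$ to three separate terms, each to be bounded by $C|t-s|$.

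For the drift term, the Cauchy--Schwarz inequality gives a factor $|t-s|$ times $\E\int_s^t |f_1(\hat{X}_r^{(\varepsilon,\delta)}, \hat{Y}_r^{(\varepsilon,\delta)})|^2\,\d r$; by Remark \ref{RMK:01}, $f_1$ has linear growth, and then Lemma \ref{THM19} provides a uniform-in-$\varepsilon$ bound on $\E[\sup_{r\in[0,T]}|\hat{X}_r^{(\varepsilon,\delta)}|^2]$ and $\sup_{r\in[0,T]}\E[|\hat{Y}_r^{(\varepsilon,\delta)}|^2]$, so this term is of order $|t-s|$. For the stochastic integral, \ito's isometry together with the linear growth of $\sigma_1$ and Lemma \ref{THM19} yields a bound of the form $C\varepsilon(t-s)(1+|x|^2+|y|^2) \leq C|t-s|$.

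The only term requiring a little extra care is the control term. Here I will apply the Cauchy--Schwarz inequality in the form
\[
\bigg|\int_s^t \sigma_1(\hat{X}_r^{(\varepsilon,\delta)})\dot{h}_1^{(\varepsilon,\delta)}(r)\,\d r\bigg|^2 \leq \int_s^t \|\sigma_1(\hat{X}_r^{(\varepsilon,\delta)})\|^2\,\d r \cdot \int_s^t |\dot{h}_1^{(\varepsilon,\delta)}(r)|^2\,\d r.
\]
The second factor is pathwise bounded by $N^2$ because $h^{(\varepsilon,\delta)} \in \mathcal{A}_N$, and the first factor is $O(t-s)$ by the linear growth of $\sigma_1$ and Lemma \ref{THM19}. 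Multiplying these bounds and taking expectation gives a contribution of order $|t-s|$ as well, with a constant depending on $N$, $T$, $x$, $y$ but not on $\varepsilon$. Summing the three estimates yields the desired inequality.

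I do not anticipate a serious obstacle: the argument is a standard continuity-in-time estimate, and the only subtlety is keeping the $\varepsilon$-dependence under control in the stochastic and control terms, which is handled by the uniform bounds supplied by Lemma \ref{THM19} and by the defining constraint $\|h^{(\varepsilon,\delta)}\|_{\mathbb{H}}^2 \leq N^2$ for elements of $\mathcal{A}_N$.
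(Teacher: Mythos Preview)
Your proposal is correct and follows essentially the same route as the paper's own proof: the same three-term decomposition via \eqref{EQ12}, the $C_r$-inequality, H\"older/Cauchy--Schwarz for the drift and control terms, It\^o's isometry for the stochastic integral, and the linear growth of $f_1,\sigma_1$ together with Lemma~\ref{THM19} and the $\mathcal{A}_N$ constraint to close the estimates. The paper's argument is identical in structure and in the tools used.
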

	\begin{proof}
		The proof is a standard argument.  Note that 
		\[ 
		\begin{split}
			\hat{X}_{t}^{(\varepsilon, \delta)}-\hat{X}_{s}^{(\varepsilon, \delta)} & =   \int_{s}^{t}f_{1}(\hat{X}_{r}^{(\varepsilon, \delta)}, \hat{Y}_{r}^{(\varepsilon, \delta)}) \d r+\int_{s}^{t}\sigma_{1}(\hat{X}_{r}^{(\varepsilon, \delta)}) \dot{h}_1^{(\varepsilon,\delta)}(r) \d r+\sqrt{\varepsilon} \int_{s}^{t}\sigma_{1}(\hat{X}_{t}^{(\varepsilon, \delta)}) \d W_{r}^1.
		\end{split}
		\]
		By  the $ C_r $-inequality, we have
		\begin{equation*}
			\begin{split}
				\E [|	\hat{X}_{t}^{(\varepsilon, \delta)}-\hat{X}_{s}^{(\varepsilon, \delta)}|^2]  
				& \leq 3\E\left[\left| \int_{s}^{t}f_{1}(\hat{X}_{r}^{(\varepsilon, \delta)}, \hat{Y}_{r}^{(\varepsilon, \delta)}) \d r\right|^2+ \left|\int_{s}^{t}\sigma_{1}(\hat{X}_{r}^{(\varepsilon, \delta)}) \dot{h}_1^{(\varepsilon,\delta)}(r) \d r\right|^2 \right. \\
				& \qquad \qquad  + \left.\left|\sqrt{\varepsilon}\int_{s}^{t} \sigma_{1}(\hat{X}_{r}^{(\varepsilon, \delta)}) \d W_{r}^1 \right|^2\right]=:\sum_{i=1}^{3}I_i.
			\end{split}
		\end{equation*}
		For $ I_1 ,$  by \hold's  inequality, the linear growth of $f_1$ and Lemma \ref{THM19}, we have that 
		\[ 
            \begin{aligned}
                I_1&\leq 3|t-s|\mathbb{E}\int_{s}^{t}|f_{1}(\hat{X}_{r}^{(\varepsilon, \delta)}, \hat{Y}_{r}^{(\varepsilon, \delta)})|^p \d r\\
                &\leq  6|t-s|\int_{s}^{t}(1+\E[|\hat{X}_{r}^{(\varepsilon, \delta)}|^2]+\E[|\hat{Y}_{r}^{(\varepsilon, \delta)}|^2])\d r\\
                &\leq C|t-s|^{2}.
            \end{aligned}
		\]
       Similarly, for $ I_2,$  the definition of $ \mathcal{A}_N$ implies that
		\[ 
            \begin{aligned}
                I_2 &\leq 3CN^2\E\left[\int_{s}^{t}(1+|\hat{X}_{r}^{(\varepsilon, \delta)}|^2 )\d r\right]\\
                &\leq C\int_{s}^{t}(1+\E[|\hat{X}_{r}^{(\varepsilon, \delta)}|^2])\d r\\
                &\leq C|t-s|.
            \end{aligned}
		\]
	For $ I_3 $,  by \ito's isometry and   (\hyperlink{(H1)}{H1}), we obtain
		\[ 
		\begin{split}
			I_3 & \leq \varepsilon  \int_{s}^{t} \E[\|  \sigma_{1}(\hat{X}_{r}^{(\varepsilon, \delta)})\|^2]\d r
			\leq C|t-s|.
		\end{split}
		\]
		To sum up, we obtain the desired assertion.
	\end{proof}
 
	%Because of Lemma \ref{THM17}, we have the following result.
	%\begin{lemma}\label{THM21}
	%Assume (H1)-(H3) hold and let \( N \in(0, \infty) \). Then there exists \( C>0 \) such that, for every \( h^{(\varepsilon, \delta)} \in \mathcal{A}_{N} \) and  \( s\leq t\in [0,T]\), we have
		%\[\mathbb{E}|\hat{X}_{t}^{(\varepsilon, \delta)}-\hat{X}_{s}^{(\varepsilon, \delta)}|^{2}\leq C (t-s).
		%\]
		%Here, \( C \) is a positive constant which depends only on \( N \).
	%\end{lemma}
        In the sequel, we sometimes emphasize the system  \( (\hat{X}_{\cdot}^{(\varepsilon, \delta)},\hat{Y}_{\cdot}^{(\varepsilon, \delta)} )\) with the initial value $(\hat{X}_0^{(\varepsilon,\delta)},\hat{Y}_0^{(\varepsilon,\delta)})=(x,y)$ by \( (\hat{X}_{\cdot}^{(\varepsilon, \delta)}(x),\hat{Y}_{\cdot}^{(\varepsilon, \delta)}(y)). \)   Now	we can prove  the following key moment estimate of the slow variable $\hat{X}_{\cdot}^{(\varepsilon, \delta)}(x)$ starting from different initial points. To this aim, given a family \( \{h^{(\varepsilon,\delta)}, \varepsilon>\delta>0\} \subset \mathcal{A}_{N} \), let the auxiliary process $ (\tilde{X}_{\cdot}^{(\varepsilon, \delta)},\tilde{Y}_{\cdot}^{(\varepsilon, \delta)}) $ be the solution to the following SDE,
	\begin{equation}\label{EQ35}
		\begin{cases}
			\d \tilde{X}_{t}^{(\varepsilon, \delta)}=f_{1}(\hat{X}_{t(\Delta)}^{(\varepsilon, \delta)}, \tilde{Y}_{t}^{(\varepsilon, \delta)}) \d t+\sigma_{1}(\tilde{X}_{t}^{(\varepsilon, \delta)}) \dot{h}_1^{(\varepsilon, \delta)}(t) \d t ,\quad \tilde{X}_{0}^{(\varepsilon, \delta)}=x,\\
			\d \tilde{Y}_{t}^{(\varepsilon, \delta)}=\frac{1}{\delta} f_{2}(\hat{X}_{t(\Delta)}^{(\varepsilon, \delta)}, \tilde{Y}_{t}^{(\varepsilon, \delta)}) \d t+\frac{1}{\sqrt{\delta}} \sigma_{2}(\hat{X}_{t(\Delta)}^{(\varepsilon, \delta)}, \tilde{Y}_{t}^{(\varepsilon, \delta)}) \d W_{t}^2,  \quad \tilde{Y}_{0}^{(\varepsilon, \delta)}=y,
		\end{cases}
	\end{equation}
	where $ t(\Delta):=\left[\frac{t}{\Delta}\right] \Delta. $ 
 Hereafter, we propose several lemmas which will be used to prove the key moment estimate.
  \begin{lemma}\label{LEM:0729:01}
Suppose that  (\hyperlink{(H1)}{H1})-(\hyperlink{(H3)}{H3}) hold.  For $x,y\in \R^d, N\geq 1,$ there exists a constant $C=C(x,y,T,N)>0$ such that for any $\varepsilon,\delta > 0$ small enough,
\[
\E \int_{0}^{T}|\hat{Y}_t^{(\varepsilon,\delta)}-\tilde{Y}_t^{(\varepsilon,\delta)}|^2\d t\leq C(\Delta+\delta/\varepsilon).
\]
\begin{proof}
    By \eqref{EQ12} and \eqref{EQ35},
    \begin{equation*}
        \begin{split}
       \hat{Y}_t^{(\varepsilon,\delta)}-\tilde{Y}_t^{(\varepsilon,\delta)} &= \frac{1}{\delta}\int_{0}^{t} [f_{2}(\hat{X}_{s}^{(\varepsilon, \delta)}, \hat{Y}_{s}^{(\varepsilon, \delta)})-f_{2}(\hat{X}_{s(\Delta)}^{(\varepsilon, \delta)}, \tilde{Y}_{s}^{(\varepsilon, \delta)})]\d s \\
       & \quad +\frac{1}{\sqrt{\delta\varepsilon}}\int_{0}^{t} \sigma_{2}(\hat{X}_{s}^{(\varepsilon, \delta)}, \hat{Y}_{s}^{(\varepsilon, \delta)}) \dot{h}_2^{(\varepsilon, \delta)}(s) \d s\\
       &\quad +\frac{1}{\sqrt{\delta}}\int_{0}^{t}[\sigma_{2}(\hat{X}_{s}^{(\varepsilon, \delta)}, \hat{Y}_{s}^{(\varepsilon, \delta)})-\sigma_{2}(\hat{X}_{s(\Delta)}^{(\varepsilon, \delta)}, \tilde{Y}_{s}^{(\varepsilon, \delta)})] \d W_{s}^2.
        \end{split}
    \end{equation*}
    Applying \ito's formula yields that
    \begin{align}\label{EQ:0729_01}
            |\hat{Y}_t^{(\varepsilon,\delta)}-\tilde{Y}_t^{(\varepsilon,\delta)}|^2 
           \nonumber &=\frac{2}{\delta}\int_{0}^{t}\langle \hat{Y}_s^{(\varepsilon,\delta)}-\tilde{Y}_s^{(\varepsilon,\delta)}, f_{2}(\hat{X}_{s}^{(\varepsilon, \delta)}, \hat{Y}_{s}^{(\varepsilon, \delta)})-f_{2}(\hat{X}_{s(\Delta)}^{(\varepsilon, \delta)}, \tilde{Y}_{s}^{(\varepsilon, \delta)})\rangle \d s\\
           \nonumber & \quad +\frac{2}{\sqrt{\varepsilon\delta}}\int_{0}^{t}\langle \hat{Y}_s^{(\varepsilon,\delta)}-\tilde{Y}_s^{(\varepsilon,\delta)},\sigma_{2}(\hat{X}_{s}^{(\varepsilon, \delta)}, \hat{Y}_{s}^{(\varepsilon, \delta)}) \dot{h}_2^{(\varepsilon, \delta)}(s)\rangle \d s\\
           &\quad +\frac{1}{\delta}\int_{0}^{t}\|\sigma_{2}(\hat{X}_{s}^{(\varepsilon, \delta)}, \hat{Y}_{s}^{(\varepsilon, \delta)})-\sigma_{2}(\hat{X}_{s(\Delta)}^{(\varepsilon, \delta)}, \tilde{Y}_{s}^{(\varepsilon, \delta)})\|^2\d s\\
          \nonumber &\quad +\frac{2}{\sqrt{\delta}}\int_{0}^{t}\langle  \hat{Y}_s^{(\varepsilon,\delta)}-\tilde{Y}_s^{(\varepsilon,\delta)},\big(\sigma_{2}(\hat{X}_{s}^{(\varepsilon, \delta)}, \hat{Y}_{s}^{(\varepsilon, \delta)})-\sigma_{2}(\hat{X}_{s(\Delta)}^{(\varepsilon, \delta)}, \tilde{Y}_{s}^{(\varepsilon, \delta)})\big)\d {W}_{s}^{2}\rangle\\
           \nonumber &=: \sum_{i=1}^{4}S_i.
        \end{align}
    By Assumption (\hyperlink{(H2)}{H2}), it follows that
    \begin{equation}\label{EQ:0729_02}
        S_1+S_3\leq \frac{1}{\delta} \int_{0}^{t}\left(-\beta_1|\hat{Y}_s^{(\varepsilon,\delta)}-\tilde{Y}_s^{(\varepsilon,\delta)}|^2+C|\hat{X}_{s}^{(\varepsilon, \delta)}-\hat{X}_{s(\Delta)}^{(\varepsilon, \delta)}|^2\right)\d s.
        \end{equation}
       By Assumption (\hyperlink{(H3)}{H3}) and Young’s inequality, 
    \begin{equation}\label{EQ:0729_03}
     \begin{split}
              S_2 &\leq \frac{2}{\sqrt{\varepsilon\delta}}\int_{0}^{t}\|\sigma_{2}(\hat{X}_{s}^{(\varepsilon, \delta)}, \hat{Y}_{s}^{(\varepsilon, \delta)}) \|\cdot|\dot{h}_2^{(\varepsilon, \delta)}(s)|\cdot|\hat{Y}_s^{(\varepsilon,\delta)}-\tilde{Y}_s^{(\varepsilon,\delta)}|\d s\\
        &\leq \frac{\gamma}{\delta}\int_{0}^{t}|\hat{Y}_s^{(\varepsilon,\delta)}-\tilde{Y}_s^{(\varepsilon,\delta)}|^2\d s+\frac{C}{\varepsilon}\int_{0}^{t}(1+|\hat{X}_s^{(\varepsilon,\delta)}|^2)\cdot|\dot{h}_2^{(\varepsilon, \delta)}(s)|^2\d s.
     \end{split}
    \end{equation}
    where we choose $\gamma\in (0,\beta_1)$. Plugging \eqref{EQ:0729_02} and \eqref{EQ:0729_03} into \eqref{EQ:0729_01} leads to
    \begin{equation*}
        \begin{split}
         |\hat{Y}_t^{(\varepsilon,\delta)}-\tilde{Y}_t^{(\varepsilon,\delta)}|^2
         &\leq -\frac{\beta_1-\gamma}{\delta}\int_{0}^{t} |\hat{Y}_s^{(\varepsilon,\delta)}-\tilde{Y}_s^{(\varepsilon,\delta)}|^2  \d s+\frac{C}{\delta}\int_{0}^{t}|\hat{X}_{s}^{(\varepsilon, \delta)}-\hat{X}_{s(\Delta)}^{(\varepsilon, \delta)}|^2\d s\\
         &\quad +\frac{2}{\sqrt{\delta}}\int_{0}^{t}\langle  \hat{Y}_s^{(\varepsilon,\delta)}-\tilde{Y}_s^{(\varepsilon,\delta)},\big(\sigma_{2}(\hat{X}_{s}^{(\varepsilon, \delta)}, \hat{Y}_{s}^{(\varepsilon, \delta)})-\sigma_{2}(\hat{X}_{s(\Delta)}^{(\varepsilon, \delta)}, \tilde{Y}_{s}^{(\varepsilon, \delta)})\big)\d {W}_{s}^{2}\rangle\\
         &\quad +\frac{C}{\varepsilon}\int_{0}^{t}(1+|\hat{X}_s^{(\varepsilon,\delta)}|^2)\cdot|\dot{h}_2^{(\varepsilon, \delta)}(s)|^2\d s.
        \end{split}
    \end{equation*}
    Taking expectation yields that
    \begin{align*}
        \frac{\beta_1-\gamma}{\delta}\E\int_{0}^{t}|\hat{Y}_s^{(\varepsilon,\delta)}-\tilde{Y}_s^{(\varepsilon,\delta)}|^2\d s
        &\leq \frac{C}{\delta}\E\int_{0}^{t}|\hat{X}_{s}^{(\varepsilon, \delta)}-\hat{X}_{s(\Delta)}^{(\varepsilon, \delta)}|^2\d s\\
        &\quad+\frac{C}{\varepsilon}\E \int_{0}^{t}(1+|\hat{X}_s^{(\varepsilon,\delta)}|^2)\cdot|\dot{h}_2^{(\varepsilon, \delta)}(s)|^2\d s
    \end{align*}
    Multiplying $\frac{\delta}{\beta_1-\gamma}$ on both sides of the above inequality, by Lemmas \ref{THM19}, \ref{THM17} and $h^{(\varepsilon,\delta)}\in \mathcal{A}_N,$ we have
    \begin{align*}
     \E\int_{0}^{T}|\hat{Y}_t^{(\varepsilon,\delta)}-\tilde{Y}_t^{(\varepsilon,\delta)}|^2\d t &\leq  C\E \int_{0}^{T}|\hat{X}_{t}^{(\varepsilon, \delta)}-\hat{X}_{t(\Delta)}^{(\varepsilon, \delta)}|^2\d t\\
       &\quad +\frac{C\delta}{\varepsilon}\E\int_{0}^{T}(1+|\hat{X}_t^{(\varepsilon,\delta)}|^2)\cdot|\dot{h}_2^{(\varepsilon, \delta)}(t)|^2\d t\\
       &\leq C\Delta+\frac{C\delta}{\varepsilon}\E\left[\sup_{t\leq T}(1+|\hat{X}_t^{(\varepsilon,\delta)}|^2)\cdot\int_{0}^{T}|\dot{h}_2^{(\varepsilon,\delta)}(t)|^2\d t\right]\\
       &\leq C(\Delta+\delta/\varepsilon).
    \end{align*}
    The proof is complete.
\end{proof}
\end{lemma}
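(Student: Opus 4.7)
The plan is to form the difference $Z_t:=\hat{Y}_t^{(\varepsilon,\delta)}-\tilde{Y}_t^{(\varepsilon,\delta)}$ and track its squared norm via It\^o's formula. Subtracting the equations for $\hat{Y}^{(\varepsilon,\delta)}$ in \eqref{EQ12} and $\tilde{Y}^{(\varepsilon,\delta)}$ in \eqref{EQ35}, I get a finite-variation drift piece involving $\frac{1}{\delta}[f_2(\hat{X}_s^{(\varepsilon,\delta)},\hat{Y}_s^{(\varepsilon,\delta)})-f_2(\hat{X}_{s(\Delta)}^{(\varepsilon,\delta)},\tilde{Y}_s^{(\varepsilon,\delta)})]$, a control piece of order $\frac{1}{\sqrt{\varepsilon\delta}}\,\sigma_2(\hat{X}_s^{(\varepsilon,\delta)},\hat{Y}_s^{(\varepsilon,\delta)})\dot h_2^{(\varepsilon,\delta)}(s)$, and a true stochastic integral driven by $W^2$. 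Applying It\^o's formula to $|Z_t|^2$ and taking expectation kills the martingale term, so I am left with a dissipativity contribution from the drift, a quadratic-variation contribution, and a control contribution.

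Next, I would invoke the monotonicity estimate (\hyperlink{(H2)}{H2}) to bound the combined drift and quadratic-variation contribution by $\frac{1}{\delta}\int_0^t (-\beta_1|Z_s|^2+C|\hat{X}_s^{(\varepsilon,\delta)}-\hat{X}_{s(\Delta)}^{(\varepsilon,\delta)}|^2)\,\d s$. The main obstacle, and the one the whole balance hinges on, is the control term $\frac{2}{\sqrt{\varepsilon\delta}}\langle Z_s,\sigma_2(\hat{X}_s^{(\varepsilon,\delta)},\hat{Y}_s^{(\varepsilon,\delta)})\dot h_2^{(\varepsilon,\delta)}(s)\rangle$, which is large because of the prefactor $1/\sqrt{\varepsilon\delta}$. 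The trick is to apply Young's inequality with a small parameter $\gamma\in(0,\beta_1)$ to split it into $\frac{\gamma}{\delta}|Z_s|^2$, which can be absorbed into the dissipative $-\frac{\beta_1}{\delta}|Z_s|^2$ term, plus a remainder $\frac{C}{\varepsilon}\|\sigma_2(\hat{X}_s^{(\varepsilon,\delta)},\hat{Y}_s^{(\varepsilon,\delta)})\|^2|\dot h_2^{(\varepsilon,\delta)}(s)|^2$. Using (\hyperlink{(H3)}{H3}) bounds the $\sigma_2$ norm by $C(1+|\hat{X}_s^{(\varepsilon,\delta)}|^2)$, producing a term whose expectation I can control.

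Finally, I would collect everything into an integral inequality of the form
\[
\frac{\beta_1-\gamma}{\delta}\E\int_0^t |Z_s|^2\,\d s \leq \frac{C}{\delta}\E\int_0^t |\hat{X}_s^{(\varepsilon,\delta)}-\hat{X}_{s(\Delta)}^{(\varepsilon,\delta)}|^2\,\d s + \frac{C}{\varepsilon}\E\int_0^t (1+|\hat{X}_s^{(\varepsilon,\delta)}|^2)|\dot h_2^{(\varepsilon,\delta)}(s)|^2\,\d s,
\]
multiply through by $\delta/(\beta_1-\gamma)$ and specialize to $t=T$. The first term on the right yields $C\,\Delta$ after applying the time-increment bound of Lemma \ref{THM17}. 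For the second term, the factor $\delta/\varepsilon$ comes out in front, and the remaining expectation is bounded by first pulling the $\sup_t(1+|\hat{X}_t^{(\varepsilon,\delta)}|^2)$ out via Cauchy--Schwarz (or a direct sup bound) and then using the moment estimate of Lemma \ref{THM19} together with the $\mathcal{A}_N$-bound $\int_0^T|\dot h_2^{(\varepsilon,\delta)}(s)|^2\,\d s\leq N^2$. Combining yields the desired $C(\Delta+\delta/\varepsilon)$, with $C$ depending on $x,y,T,N$ but not on $\varepsilon,\delta$.
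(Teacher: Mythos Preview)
Your proposal is correct and follows essentially the same approach as the paper: apply It\^o's formula to $|\hat{Y}_t^{(\varepsilon,\delta)}-\tilde{Y}_t^{(\varepsilon,\delta)}|^2$, use (\hyperlink{(H2)}{H2}) for the combined drift and quadratic-variation terms, split the control term via Young's inequality with a parameter $\gamma\in(0,\beta_1)$ so that $\frac{\gamma}{\delta}|Z_s|^2$ is absorbed into the dissipation, bound $\|\sigma_2\|^2$ by (\hyperlink{(H3)}{H3}), take expectation to drop the martingale, multiply through by $\delta/(\beta_1-\gamma)$, and close with Lemmas~\ref{THM19}, \ref{THM17} and the $\mathcal{A}_N$-bound. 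The paper's proof is line-for-line the same strategy.
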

The next lemma reveals the relationship between the controlled slow component and the auxiliary slow component.
 \begin{lemma}\label{LEM:0730:01}
Suppose that (\hyperlink{(H1)}{H1})-(\hyperlink{(H3)}{H3}) hold.  Let \( N\geq 1 \), 
\begin{equation}\label{EQ60}
        \begin{aligned}
            \E[|\hat{X}_t^{(\varepsilon,\delta)}-\tilde{X}_t^{(\varepsilon,\delta)}|^2]&\leq C\bigg[\Big(\varepsilon+\rho_{0,\eta_1}(\Delta )+\rho_{0,\eta_1}(\Delta+\delta/\varepsilon)\Big)
            \\
            &\qquad
            +\Big(\varepsilon+\rho_{0,\eta_1}(\Delta )+\rho_{0,\eta_1}(\Delta+\delta/\varepsilon)\Big)^{\exp \{-CT\}}\bigg].
        \end{aligned}
	\end{equation}
 \end{lemma}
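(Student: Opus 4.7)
The plan is to expand $\hat{X}_t^{(\varepsilon,\delta)} - \tilde{X}_t^{(\varepsilon,\delta)}$ using \eqref{EQ12} and \eqref{EQ35} into three terms: a drift discrepancy $J_t^{(1)} := \int_{0}^{t}\bigl[f_1(\hat{X}_s^{(\varepsilon,\delta)},\hat{Y}_s^{(\varepsilon,\delta)}) - f_1(\hat{X}_{s(\Delta)}^{(\varepsilon,\delta)},\tilde{Y}_s^{(\varepsilon,\delta)})\bigr]\d s$, a diffusion-control discrepancy $J_t^{(2)} := \int_{0}^{t}\bigl[\sigma_1(\hat{X}_s^{(\varepsilon,\delta)}) - \sigma_1(\tilde{X}_s^{(\varepsilon,\delta)})\bigr]\dot{h}_1^{(\varepsilon,\delta)}(s)\d s$, and a small-noise stochastic integral $J_t^{(3)} := \sqrt{\varepsilon}\int_{0}^{t}\sigma_1(\hat{X}_s^{(\varepsilon,\delta)})\d W_s^1$. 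After the $C_r$-inequality and taking expectations, $J_t^{(3)}$ contributes a term of order $\varepsilon$ via \ito's isometry, the linear growth of $\sigma_1$ from Remark \ref{RMK:01}, and Lemma \ref{THM19}.

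For $J_t^{(1)}$, Cauchy--Schwarz in time combined with (\hyperlink{(H1)}{H1}) gives $|f_1(z_1)-f_1(z_2)|^2 \leq |z_1-z_2|^2 g_1^2(|z_1-z_2|)$. Applying Remark \ref{RMK:01} with $p=1$ yields $x g_1(x) \leq C \rho_{\eta_1}(x)$, and squaring invokes Lemma \ref{LEM:CONCAVE:01} to rewrite the right-hand side as the concave function $C\rho_{0,\eta_1}(|z_1-z_2|^2)$. With $|z_1-z_2|^2 = |\hat{X}_s^{(\varepsilon,\delta)}-\hat{X}_{s(\Delta)}^{(\varepsilon,\delta)}|^2 + |\hat{Y}_s^{(\varepsilon,\delta)}-\tilde{Y}_s^{(\varepsilon,\delta)}|^2$, the subadditivity of $\rho_{0,\eta_1}$ (concave with $\rho_{0,\eta_1}(0)=0$) splits the bound; Jensen's inequality then brings the expectation inside $\rho_{0,\eta_1}$. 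Lemma \ref{THM17} estimates the first summand by $C\Delta$, while Lemma \ref{LEM:0729:01} controls the second (integrated) summand by $C(\Delta+\delta/\varepsilon)$, producing
\[
\E|J_t^{(1)}|^2 \leq C\bigl[\rho_{0,\eta_1}(\Delta) + \rho_{0,\eta_1}(\Delta + \delta/\varepsilon)\bigr].
\]

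For $J_t^{(2)}$, Cauchy--Schwarz together with $h^{(\varepsilon,\delta)}\in\mathcal{A}_N$ reduces the problem to controlling $\int_{0}^{t}\E\|\sigma_1(\hat{X}_s^{(\varepsilon,\delta)})-\sigma_1(\tilde{X}_s^{(\varepsilon,\delta)})\|^2 \d s$. Using (\hyperlink{(H1)}{H1}) with Remark \ref{RMK:01} (taking $p=2$) gives $\|\sigma_1(x_1)-\sigma_1(x_2)\|^2 \leq C\rho_{\eta_2}(|x_1-x_2|^2)$, and Jensen's inequality produces an integrand of the form $\rho_{\eta_2}(\E|\hat{X}_s^{(\varepsilon,\delta)}-\tilde{X}_s^{(\varepsilon,\delta)}|^2)$. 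Setting $\varphi(t) := \E|\hat{X}_t^{(\varepsilon,\delta)}-\tilde{X}_t^{(\varepsilon,\delta)}|^2$ and choosing $\eta \leq \eta_1\wedge\eta_2$ (so that Lemma \ref{THM03}(1) allows a single $\rho_\eta$ to dominate both $\rho_{\eta_1}$ and $\rho_{\eta_2}$), the three contributions combine into a Bihari-type integral inequality
\[
\varphi(t) \leq A_{\varepsilon,\delta,\Delta} + C\int_{0}^{t}\rho_\eta(\varphi(s))\d s,\qquad A_{\varepsilon,\delta,\Delta} := C\bigl(\varepsilon + \rho_{0,\eta_1}(\Delta) + \rho_{0,\eta_1}(\Delta+\delta/\varepsilon)\bigr).
\]
Applying Lemma \ref{THM02}(1) then yields $\varphi(t) \leq C\bigl(A_{\varepsilon,\delta,\Delta} + A_{\varepsilon,\delta,\Delta}^{\exp\{-CT\}}\bigr)$, which is exactly \eqref{EQ60}.

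The main obstacle is structural rather than computational: the drift term $J_t^{(1)}$ involves the \emph{mixed} difference $(\hat{X}_s-\hat{X}_{s(\Delta)},\hat{Y}_s-\tilde{Y}_s)$, which is not $\hat{X}_s-\tilde{X}_s$, so it must be absorbed into the additive constant $A_{\varepsilon,\delta,\Delta}$ via the already-established auxiliary estimates, while only $J_t^{(2)}$ reproduces the unknown $\varphi$ under the integral --- precisely what is needed to close the loop via Bihari. Tracking the $\rho$-functions carefully, in particular converting $\rho_{\eta_1}^2(x)$ into $\rho_{0,\eta_1}(x^2)$ through Lemma \ref{LEM:CONCAVE:01} and unifying $\eta_1,\eta_2$ through Lemma \ref{THM03}(1), is the delicate bookkeeping that makes the Bihari step applicable.
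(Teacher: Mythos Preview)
Your proposal is correct and follows essentially the same route as the paper: the same three-term decomposition of $\hat{X}_t^{(\varepsilon,\delta)}-\tilde{X}_t^{(\varepsilon,\delta)}$, the same handling of the stochastic integral via \ito's isometry and linear growth, the same reduction of the drift discrepancy to $\rho_{0,\eta_1}(\Delta)+\rho_{0,\eta_1}(\Delta+\delta/\varepsilon)$ through Lemmas~\ref{THM17} and~\ref{LEM:0729:01} with Jensen, and the same closing step via the Bihari-type inequality of Lemma~\ref{THM02}(1). The only cosmetic differences are that the paper splits the $f_1$-difference by a triangle inequality in one variable at a time (rather than your direct $|z_1-z_2|^2$ with subadditivity of $\rho_{0,\eta_1}$), and that the paper introduces a separate constant $\eta_3$ for the $\sigma_1$-estimate rather than unifying through Lemma~\ref{THM03}(1); neither affects the argument.
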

 \begin{proof}
     It follows from \eqref{EQ12} and \eqref{EQ35} that 
	\begin{equation}
		\begin{split}
			\d (\hat{X}_t^{(\varepsilon,\delta)}-\tilde{X}_t^{(\varepsilon,\delta)}) & = (f_1(\hat{X}_t^{(\varepsilon,\delta)},\hat{Y}_t^{(\varepsilon,\delta)}) -f_1(\hat{X}_{t(\Delta)}^{(\varepsilon,\delta)},\tilde{Y}_t^{(\varepsilon,\delta)}))\d t \\
   &\quad + (\sigma_1(\hat{X}_t^{(\varepsilon,\delta)})-\sigma_1(\tilde{X}_t^{(\varepsilon,\delta)}))\dot{h}_1^{(\varepsilon,\delta)}(t)\d t + \sqrt{\varepsilon}\sigma_{1}(\hat{X}_t^{(\varepsilon,\delta)})\d W^1_t.
		\end{split}
	\end{equation}
    Set $J_1:= \E[ |\hat{X}_{t}^{(\varepsilon, \delta)}-\tilde{X}_{t}^{(\varepsilon,\delta)}|^2 ]$.  By the $C_r$-inequality and \hold's inequality, we have 
    \begin{align*}
    J_{1}& \leqslant C \E\left[\left|\int_{0}^{t} f_{1}(\hat{X}_{s}^{(\varepsilon, \delta)}, \hat{Y}_{s}^{(\varepsilon, \delta)})-f_{1}(\hat{X}_{s(\Delta)}^{(\varepsilon, s)}, \hat{Y}_{s}^{(\varepsilon, \delta)}) \d s\right|^{2}\right] +C \varepsilon \E\left[\left|\int_{0}^{t} \sigma_{1}(\hat{X}_{s}^{(\varepsilon, \delta)}) \d W_{s}^1\right|^{2}\right] \\
    &\quad+C \E\left[\left|\int_{0}^{t}(\sigma_{1}(\hat{X}_{s}^{(\varepsilon, \delta)})-\sigma_{1}(\tilde{X}_{s}^{(\varepsilon, \delta)})) \dot{h}_1^{(\varepsilon, \delta)}(s) \d s\right|^{2}\right] \\
    &\leqslant C \E\left[\int_{0}^{t}|f_{1}(\hat{X}_{s}^{(\varepsilon, \delta)}, \hat{Y}_{s}^{(\varepsilon, \delta)})-f_{1}(\hat{X}_{s(\Delta)}^{(\varepsilon)}, \hat{Y}_{s}^{(\varepsilon, \delta)})|^{2} \d s\right] +C \varepsilon \E\left[\int_{0}^{t}\|\sigma_{1}(\hat{X}_s^{(\varepsilon, \delta)})\|^{2} \d s\right]\\
    &\quad +C \E\left[\int_{0}^{t}\|\sigma_{1}(\hat{X}_{s}^{(\varepsilon, \delta)})-\sigma_{1}(\tilde{X}_{s}^{(\varepsilon, s)})\|^{2} \d s\cdot\int_{0}^{t}|\dot{h}_1^{(\varepsilon, \delta)}(s)|^{2} \d s\right] \\
    &\leqslant C\varepsilon+C \E\left[\int_{0}^{t}|f_{1}(\hat{X}_{s}^{(\varepsilon, \delta)}, \hat{Y}_{s}^{(\varepsilon, \delta)})-f_{1}(\hat{X}_{s(\Delta)}^{(\varepsilon)}, \hat{Y}_{s}^{(\varepsilon, \delta)})|^{2} \d s\right] \\
    &\quad +C \E\left[\int_{0}^{t}\|\sigma_{1}(\hat{X}_{s}^{(\varepsilon, \delta)})-\sigma_{1}(\tilde{X}_{s}^{(\varepsilon, \delta)})\|^{2} \d s\right] \\
    &=:C\varepsilon+J_{1,1}+J_{1,2}
    \end{align*}
    where we have used the linear growth of $ \sigma_1$ and Lemma \ref{THM19} in the last inequality.

	%then we have that 
	%\begin{equation*}
	%	\begin{split}
		%		H_1 & \leq \int_{0}^{t}2\varphi_s^{(\varepsilon,\delta)} \cdot |f_1(\hat{X}_s^{(\varepsilon,\delta)},\hat{Y}_s^{(\varepsilon,\delta)}) 
		%		 -f_1(\hat{X}_{s(\Delta)}^{(\varepsilon,\delta)},\tilde{Y}_s^{(\varepsilon,\delta)})|\d s\\
		%		 & \leq \int_{0}^{t} [\varphi_s^{(\varepsilon,\delta)}]^2 +|f_1(\hat{X}_s^{(\varepsilon,\delta)},\hat{Y}_s^{(\varepsilon,\delta)}) 
		%		 -f_1(\hat{X}_{s(\Delta)}^{(\varepsilon,\delta)},\tilde{Y}_s^{(\varepsilon,\delta)})|^2\d s:=  \int_{0}^{t} [\varphi_s^{(\varepsilon,\delta)}]^2 \d s+ H_{11}
		%	\end{split}
	%\end{equation*}
	%and 
	%\begin{equation*}
	%	\begin{split}
		%		H_2 \leq \int_{0}^{t} [\varphi_s^{(\varepsilon,\delta)}]^2 + \|\sigma_1(\hat{X}_s^{(\varepsilon,\delta)})-\sigma_1(\tilde{X}_s^{(\varepsilon,\delta)}) \|^2 \cdot|\dot{h}_s^{(\varepsilon,\delta)}|^2\d s.
		%	\end{split}
	%\end{equation*}
	%Moreover, 
	
	%\begin{equation*}
	%	\begin{split}
		%			|\hat{X}_t^{(\varepsilon,\delta)}-\tilde{X}_t^{(\varepsilon,\delta)}|^2 = |\varphi_t^{(\varepsilon,\delta)}|^2=\sum_{i=1}^4 H_i & \leq 2\int_{0}^{t} [\varphi_t^{(\varepsilon,\delta)}]^2\d s+\int_{0}^{t}\rho_{\eta}(|\varphi_s^{(\varepsilon,\delta)}|)|\varphi_s^{(\varepsilon,\delta)}|^2C|\dot{h}_s^{(\varepsilon,\delta)}|^2 \d s \\
		%			& \quad + H_{11}+H_3+H_4,
		%	\end{split}
	%\end{equation*}
	%Due to Bihari's inequality we have that 
	%\begin{equation}
	%	\begin{split}
		%			|\hat{X}_t^{(\varepsilon,\delta)}-\tilde{X}_t^{(\varepsilon,\delta)}|^2  \leq (H_{11}+H_3+H_4)^{\exp -\int_{0}^{t} C|\dot{h}_s^{(\varepsilon,\delta)}| \d s}.
		%	\end{split}
	%\end{equation}
   For $J_{1,1}$, by (\hyperlink{(H1)}{H1}),
   \begin{align*}
		J_{1,1}& \leq C \E \left[\int_{0}^{t} |f_1(\hat{X}_s^{(\varepsilon,\delta)},\hat{Y}_s^{(\varepsilon,\delta)}) 
		-f_1(\hat{X}_{s(\Delta)}^{(\varepsilon,\delta)},\hat{Y}_s^{(\varepsilon,\delta)})|^2 \d s\right]\\
		&\quad+ C\E \left[\int_{0}^{t} |f_1(\hat{X}_{s(\Delta)}^{(\varepsilon,\delta)},\hat{Y}_s^{(\varepsilon,\delta)}) -f_1(\hat{X}_{s(\Delta)}^{(\varepsilon,\delta)},\tilde{Y}_s^{(\varepsilon,\delta)})|^2\d s \right]\\
		& \leq C\E \left[\int_{0}^{t} |\hat{X}_s^{(\varepsilon,\delta)}-\hat{X}_{s(\Delta)}^{(\varepsilon,\delta)}|^2\cdot g_1^2(|\hat{X}_s^{(\varepsilon,\delta)}-\hat{X}_{s(\Delta)}^{(\varepsilon,\delta)}|)\d s\right] \\
		&\quad+C\E \left[\int_{0}^{t} 
		 |\hat{Y}_s^{(\varepsilon,\delta)}-\tilde{Y}_{s}^{(\varepsilon,\delta)}|^2\cdot g_1^2(|\hat{Y}_s^{(\varepsilon,\delta)}-\tilde{Y}_{s}^{(\varepsilon,\delta)}|) \d s\right]\\
		& \leq C\E\left[\int_{0}^{t} \rho_{\eta_1}^2 (|\hat{X}_s^{(\varepsilon,\delta)}-\hat{X}_{s(\Delta)}^{(\varepsilon,\delta)}|) \d s\right] + C\E\left[\int_{0}^{t} \rho_{\eta_1}^2 (|\hat{Y}_s^{(\varepsilon,\delta)}-\tilde{Y}_{s}^{(\varepsilon,\delta)}|^2 \d s \right]\\
		& \leq C \int_{0}^{t} \rho_{0,\eta_1}(\E |\hat{X}_s^{(\varepsilon,\delta)}-\hat{X}_{s(\Delta)}^{(\varepsilon,\delta)}|^2) \d s+C \int_{0}^{t} \rho_{0,\eta_1}(\E |\hat{Y}_s^{(\varepsilon,\delta)}-\tilde{Y}_{s}^{(\varepsilon,\delta)}|^2) \d s\\
		& \leq C\rho_{0,\eta_1}(\Delta )+C\rho_{0,\eta_1}(\Delta+\delta/\varepsilon),
	\end{align*}
	where the last inequality has used Jensen's inequality, Lemmas \ref{THM17} and \ref{LEM:0729:01}.
 %the following estimate about $\E |\hat{Y}_s^{(\varepsilon,\delta)}-\tilde{Y}_{s}^{(\varepsilon,\delta)}|^2.$
%result in \cite[Lemma 4.4]{Sun2021}. 
 
 %\textcolor{red}{need to complete, follow the same arguement like Hong and Liu \cite{hongLargeDeviationPrinciple2024}.}

    By Remark \ref{RMK:01}, there exist two positive constants $C$ and $\eta_{3}$ such that for any $x \in \mathbb{R}_{+}$,  $x^2g_2(x)\leq C\rho_{\eta_{3}}(x^2)$. Hence, for $J_{1,2}$, it deduces from   (\hyperlink{(H1)}{H1}) that
	\begin{align*}
			J_{1,2}  &\leq C\E\left[\int_{0}^{t}|\hat{X}_s^{(\varepsilon,\delta)}-\tilde{X}^{(\varepsilon,\delta)}_s|^2\cdot g_2(|\hat{X}_s^{(\varepsilon,\delta)}-\tilde{X}^{(\varepsilon,\delta)}_s|) \d s\right]\\
			& \leq C\E\left[\int_{0}^{t}\rho_{\eta_3}(|\hat{X}_s^{(\varepsilon,\delta)}-\tilde{X}^{(\varepsilon,\delta)}_s|^2) \d s\right] \\
        &\leq C\int_{0}^{t}\rho_{\eta_3}(\E|\hat{X}_s^{(\varepsilon,\delta)}-\tilde{X}^{(\varepsilon,\delta)}_s|^2) \d s\\
        &\leq C\int_{0}^{t}\rho_{\eta_3}(\E|\hat{X}_s^{(\varepsilon,\delta)}-\tilde{X}^{(\varepsilon,\delta)}_s|^2)\d s.
		\end{align*}

	%define a family stopping time as follows, for any $ \varepsilon,~R>0, $
	%\[ 
	%\tau_{R,\varepsilon}:=\inf \{t>0:|\hat{X}_{t}^{(\varepsilon, \delta)}|>R\},
	%\]
	%and define $ \Omega_{R,\varepsilon}:=\{\tau_{R,\varepsilon}>T\} ,$  so
	%\[ 
	%|\hat{X}_t^{(\varepsilon,\delta)}-\tilde{X}_t^{(\varepsilon,\delta)}|^2  =|\hat{X}_t^{(\varepsilon,\delta)}-\tilde{X}_t^{(\varepsilon,\delta)}|^2 \cdot(\1_{\Omega_{R,\varepsilon}} +\1_{{\Omega^c_{R,\varepsilon}}}).
	%\]
	%On the one hand,  on $ {\Omega_{R,\varepsilon}^c} ,$ by \hold's inequality 
	%\begin{equation}
	%	\begin{split}\label{EQ37}
	%		\E |\hat{X}_t^{(\varepsilon,\delta)}-\tilde{X}_t^{(\varepsilon,\delta)}|^2 \cdot\1_{\Omega^c_{R,\varepsilon}} & \leq [\E |\hat{X}_t^{(\varepsilon,\delta)}-\tilde{X}_t^{(\varepsilon,\delta)}|^4]^{1/2} \cdot\P(T\geq \tau_{R,\varepsilon})^{1/2}\\
	%		& \leq CR^{-1}{\E [\sup_{t\in [0,T]}|\hat{X}_t^{(\varepsilon,\delta)}|^2]}\leq C_1R^{-1}.
	%	\end{split}
	%\end{equation}
	%On the other hand, for $ {\Omega_{R,\varepsilon}},$ we can take expectation about  $ I_{1}\cdot\1_{\Omega_{R,\varepsilon}}, $ i.e., 
Combining the above calculations implies 
\begin{equation*}
    J_1\leq C\varepsilon+C\rho_{0,\eta_1}(\Delta )+C\rho_{0,\eta_1}(\Delta+\delta/\varepsilon)+C\int_{0}^{t}\rho_{\eta_3}(\E|\hat{X}_s^{(\varepsilon,\delta)}-\tilde{X}^{(\varepsilon,\delta)}_s|^2)\d s,
\end{equation*}
which, together with Lemma \ref{THM02}-(1), leads to
the desired result.
	%Taking the same manner as in Lemmas \ref{THM19} and \ref{THM20} , we have
	%\[
	%\mathbb{E}[|\tilde{X}^{(\varepsilon, \delta)}|^{p}] \leq C 
	%\]
	%and
	%\[
	%\int_{0}^{T} \mathbb{E}[|\tilde{Y}_{t}^{(\varepsilon, \delta)}|^{2}] \d t \leq C,
	%\]
	%where the constant \( C \) is independent of \( \varepsilon, \delta, \Delta \).
 \end{proof}
 \begin{lemma}\label{LEM:0730:02}
Suppose that (\hyperlink{(H1)}{H1})-(\hyperlink{(H3)}{H3}) hold.  Let \( N\geq 1 \), for any fixed $x\in \R^d,$
\begin{equation*}
\begin{split}
     \E[|\tilde{X}_{t}^{(\varepsilon, \delta)}(x)-{X}_{t}^{h^{(\varepsilon,\delta)}}(x)|^2]&\leq  G^{-1}\Big(G\big(C\left(\rho_{\eta_1}({\delta}{\Delta}^{-1})+\rho_{0,\eta_1}({\delta}{\Delta}^{-1})\right.\\
&\qquad\qquad\left.+\Delta^2\rho_{0,\eta_1}( \Delta)+\rho_{0,\eta_1}(R(\varepsilon,\delta,\Delta)\right)\big)+CT\Big),
\end{split}
\end{equation*}     
  where $G(x)=\int_1^{x}\frac{1}{\rho_{0,\eta_1}(y)}\d y$ is strictly increasing and satisfies $G(x) \to -\infty$ as $x \downarrow 0$.
 \end{lemma}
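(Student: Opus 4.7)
The plan is to express $\tilde X_t^{(\varepsilon,\delta)}(x)-X_t^{h^{(\varepsilon,\delta)}}(x)$ as the sum of three integrals---an averaging error, a drift discrepancy, and a diffusion-like difference---and to reduce everything to a single Bihari-type integral inequality that can be closed via Lemma~\ref{LEM:0801:02}. By \eqref{EQ35} and \eqref{EQ06},
\begin{align*}
\tilde X_t^{(\varepsilon,\delta)}-X_t^{h^{(\varepsilon,\delta)}}
&=\int_0^t\bigl[f_1(\hat X_{s(\Delta)}^{(\varepsilon,\delta)},\tilde Y_s^{(\varepsilon,\delta)})-\bar f_1(\hat X_{s(\Delta)}^{(\varepsilon,\delta)})\bigr]\,\d s\\
&\quad+\int_0^t\bigl[\bar f_1(\hat X_{s(\Delta)}^{(\varepsilon,\delta)})-\bar f_1(X_s^{h^{(\varepsilon,\delta)}})\bigr]\,\d s\\
&\quad+\int_0^t\bigl[\sigma_1(\tilde X_s^{(\varepsilon,\delta)})-\sigma_1(X_s^{h^{(\varepsilon,\delta)}})\bigr]\dot h_1^{(\varepsilon,\delta)}(s)\,\d s.
\end{align*}
After squaring, taking expectations, and applying the $C_r$- and \hold's inequalities together with $\int_0^T|\dot h_1^{(\varepsilon,\delta)}(s)|^2\,\d s\le N^2$ from $h^{(\varepsilon,\delta)}\in\mathcal A_N$, it suffices to control the squared $L^2$-norms of these three integrals.

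I expect the averaging integral
\[
K_1(t):=\E\left|\int_0^t\bigl[f_1(\hat X_{s(\Delta)}^{(\varepsilon,\delta)},\tilde Y_s^{(\varepsilon,\delta)})-\bar f_1(\hat X_{s(\Delta)}^{(\varepsilon,\delta)})\bigr]\,\d s\right|^2
\]
to be the main obstacle, and it is the step that produces the terms $\rho_{\eta_1}(\delta/\Delta)$, $\rho_{0,\eta_1}(\delta/\Delta)$, and $\Delta^2\rho_{0,\eta_1}(\Delta)$ appearing in the statement. First I would decompose $[0,t]$ into subintervals $[k\Delta,(k+1)\Delta]$, on which $\hat X_{s(\Delta)}^{(\varepsilon,\delta)}\equiv\hat X_{k\Delta}^{(\varepsilon,\delta)}$ is frozen, and expand $K_1(t)$ into a double integral. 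For pairs $s\le r$ lying in different windows, conditioning on $\mathscr F_{r(\Delta)}$ and invoking the Markov property identifies $\tilde Y_r^{(\varepsilon,\delta)}$ in law with a frozen process $Y^{\hat X_{r(\Delta)}^{(\varepsilon,\delta)}}$ started from $\tilde Y_{r(\Delta)}^{(\varepsilon,\delta)}$ and run for time $u=(r-r(\Delta))/\delta$; Lemma~\ref{THM05} then bounds the conditional expectation by $C\rho_{\eta_1}\bigl(e^{-\beta_1 u/2}(1+|\hat X_{r(\Delta)}^{(\varepsilon,\delta)}|+|\tilde Y_{r(\Delta)}^{(\varepsilon,\delta)}|)\bigr)$, and integrating the exponential over $u\in[0,\Delta/\delta]$ together with Jensen's inequality for the concave $\rho_{0,\eta_1}$ and the moment bound of Lemma~\ref{THM19} produces the $\rho_{\eta_1}(\delta/\Delta)+\rho_{0,\eta_1}(\delta/\Delta)$ contribution. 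Pairs $(r,s)$ lying in a common window contribute the boundary term $\Delta^2\rho_{0,\eta_1}(\Delta)$ via the non-Lipschitz control on $f_1$ coming from (\hyperlink{(H1)}{H1}) and Remark~\ref{RMK:01}.

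For the drift-averaged integral I would use the \hold-type estimate \eqref{EQ27} for $\bar f_1$ together with the triangle decomposition $|\hat X_{s(\Delta)}^{(\varepsilon,\delta)}-X_s^{h^{(\varepsilon,\delta)}}|\le|\hat X_{s(\Delta)}^{(\varepsilon,\delta)}-\hat X_s^{(\varepsilon,\delta)}|+|\hat X_s^{(\varepsilon,\delta)}-\tilde X_s^{(\varepsilon,\delta)}|+|\tilde X_s^{(\varepsilon,\delta)}-X_s^{h^{(\varepsilon,\delta)}}|$: Lemma~\ref{THM17} handles the first piece, Lemma~\ref{LEM:0730:01} the second, and Jensen's inequality applied to the concave function $\rho_{0,\eta_1}$ (available by Lemma~\ref{LEM:CONCAVE:01}) converts the third into $C\int_0^t\rho_{0,\eta_1}(\E|\tilde X_s^{(\varepsilon,\delta)}-X_s^{h^{(\varepsilon,\delta)}}|^2)\,\d s$. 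The $\sigma_1$ integral is treated analogously, using (\hyperlink{(H1)}{H1}), Remark~\ref{RMK:01}, and Jensen's inequality, yielding another contribution of the same integrated form.

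Summing these estimates yields
\[
\begin{aligned}
\E|\tilde X_t^{(\varepsilon,\delta)}-X_t^{h^{(\varepsilon,\delta)}}|^2
&\le C\bigl(\rho_{\eta_1}(\delta/\Delta)+\rho_{0,\eta_1}(\delta/\Delta)+\Delta^2\rho_{0,\eta_1}(\Delta)+\rho_{0,\eta_1}(R(\varepsilon,\delta,\Delta))\bigr)\\
&\quad+C\int_0^t\rho_{0,\eta_1}\bigl(\E|\tilde X_s^{(\varepsilon,\delta)}-X_s^{h^{(\varepsilon,\delta)}}|^2\bigr)\,\d s,
\end{aligned}
\]
and Bihari's inequality (Lemma~\ref{LEM:0801:02}) applied with $\rho=\rho_{0,\eta_1}$ and $q\equiv C$ yields precisely the claimed bound, since $G(x)=\int_1^x\rho_{0,\eta_1}(y)^{-1}\,\d y$ is the function $g$ of that lemma and $\int_0^T q(s)\,\d s=CT$. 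The delicate point throughout will be tracking the consistent use of $\rho_{\eta_1}$ versus $\rho_{0,\eta_1}$ (via Lemmas~\ref{THM03} and \ref{LEM:CONCAVE:01}) so that each "$\rho$ of an expectation" substitution passes cleanly through Jensen's inequality without destroying the non-Lipschitz structure of the averaged coefficient.
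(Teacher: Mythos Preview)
Your overall strategy---the three-integral decomposition, the triangle inequality for the $\bar f_1$-term via Lemmas~\ref{THM17} and~\ref{LEM:0730:01}, the H\"older/(\hyperlink{(H1)}{H1}) treatment of the $\sigma_1$-integral, and the closure by Bihari's inequality with $\rho_{0,\eta_1}$---matches the paper's proof exactly, and the final step is precisely the paper's application of Lemma~\ref{LEM:0801:02}.

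The one place your outline diverges is the handling of the averaging term $K_1$ (the paper's $J_{2,1}$). You propose expanding the full double integral and splitting into ``different-window'' pairs (handled by conditioning on $\mathscr F_{r(\Delta)}$) and ``same-window'' pairs (attributed to the boundary term $\Delta^2\rho_{0,\eta_1}(\Delta)$). The paper instead applies the $C_r$-inequality \emph{first}, bounding $\bigl|\sum_k\int_{k\Delta}^{(k+1)\Delta}\cdots\bigr|^2$ by $\lfloor t/\Delta\rfloor\sum_k\bigl|\int_{k\Delta}^{(k+1)\Delta}\cdots\bigr|^2$, so cross-block pairs never appear; each block's square is then expanded as a within-block double integral $\mathcal J_k(s,\zeta)$, and the conditioning is on $\mathscr F_\zeta$ \emph{inside} the block (after identifying $\tilde Y^{(\varepsilon,\delta)}$ with a frozen process in law). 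Your attribution of the residual term to same-window pairs via ``non-Lipschitz control on $f_1$'' is not right: same-window pairs still require the ergodicity/conditioning argument, not a modulus-of-continuity bound. In the paper the $\Delta^2$ contribution is simply the trivial bound on the incomplete final block $[\lfloor t/\Delta\rfloor\Delta,t]$, while the separate $\rho_{0,\eta_1}(\Delta)$ term arises from the first leg of the triangle inequality in $J_{2,2}$ via Lemma~\ref{THM17}; the ``$\Delta^2\rho_{0,\eta_1}(\Delta)$'' in the displayed statement is thus a typographical artifact for ``$\Delta^2+\rho_{0,\eta_1}(\Delta)$'', not a single product term to be justified.
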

\begin{proof}
Set $J_2:= \E[|\tilde{X}_{t}^{(\varepsilon, \delta)}(x)-{X}_{t}^{h^{(\varepsilon,\delta)}}(x)|^2].$ For simplicity of notation, the initial values $x$ of the processes will be omitted in this proof. By \eqref{EQ06} and  \eqref{EQ35},
	\begin{equation*}
		\begin{split}
			J_2 & = \E\left| \int_{0}^{t} f_1(\hat{X}_{s(\Delta)}^{(\varepsilon,\delta)},\tilde{Y}_s^{(\varepsilon,\delta)})+\sigma_1(\tilde{X}_s^{(\varepsilon,\delta)})\dot{h}_1^{(\varepsilon,\delta)}(s) -  \bar{f}_1(X_s^{h^{(\varepsilon,\delta)}}) - \sigma_1(X_s^{h^{(\varepsilon,\delta)}}) \dot{h}_1^{(\varepsilon,\delta)}(s)\d s \right|^2\\
			& \leq 3\E \left| \int_{0}^{t} f_1(\hat{X}_{s(\Delta)}^{(\varepsilon,\delta)},\tilde{Y}_s^{(\varepsilon,\delta)})-\bar{f}_1(\hat{X}_{s(\Delta)}^{(\varepsilon,\delta)}) \d s \right|^2 +
			3\E  \left|  \int_{0}^{t} \bar{f}_1(\hat{X}_{s(\Delta)}^{(\varepsilon,\delta)})-  \bar{f}_1(X_s^{h^{(\varepsilon,\delta)}})\d s\right|^2\\
			& \quad + 3\E\left|\int_{0}^{t} [\sigma_{1} (\tilde{X}_s^{(\varepsilon,\delta)}) -\sigma_{1}(X_s^{h^{(\varepsilon,\delta)}})] \dot{h}_1^{(\varepsilon,\delta)}(s)\d s \right|^2\\
   &=: \sum_{i=1}^{3} J_{2,i}.
		\end{split}
	\end{equation*} 
	We estimate these terms one by one. For \( J_{2,1} \), by $ C_r $-inequality, (\hyperlink{(H3)}{H3}), the linear growth of  $ \bar{f_1}$, and Lemma \ref{THM19}, we deduce 
	\begin{align*}
			J_{2,1} & \leq  C\E \bigg[ \bigg|\sum_{k=0}^{\lfloor\frac{t}{\Delta}\rfloor-1}\int_{k\Delta}^{(k+1)\Delta}f_1(\hat{X}_{k\Delta}^{(\varepsilon,\delta)},\tilde{Y}_s^{(\varepsilon,\delta)})-\bar{f}_1(\hat{X}_{k \Delta}^{(\varepsilon,\delta)}) \d s \bigg|^2\bigg] \\
			& \quad+  C\E \bigg[\bigg| \int_{\lfloor\frac{t}{\Delta}\rfloor\Delta}^{t} f_1(\hat{X}_{\lfloor\frac{t}{\Delta}\rfloor\Delta}^{(\varepsilon,\delta)},\tilde{Y}_s^{(\varepsilon,\delta)})-\bar{f}_1(\hat{X}_{\lfloor\frac{t}{\Delta}\rfloor\Delta}^{(\varepsilon,\delta)}) \d s\bigg|^2\bigg]\\
  & \leq  C \lfloor\frac{t}{\Delta}\rfloor \mathbb{E}\bigg[ \sum_{k=0}^{\left\lfloor\frac{t}{\Delta}\right\rfloor-1}\bigg|\int_{k \Delta}^{(k+1) \Delta}f_1(\hat{X}_{k\Delta}^{(\varepsilon,\delta)},\tilde{Y}_s^{(\varepsilon,\delta)})-\bar{f}_1(\hat{X}_{k \Delta}^{(\varepsilon,\delta)}) \d s\bigg|^2\bigg]  +C\Delta^2 \\
		&  	\leq \frac{C}{\Delta^2} \max _{0 \leq k \leq\left\lfloor\frac{T}{\Delta}\right\rfloor-1} \mathbb{E}\bigg[\bigg|\int_{k \Delta}^{(k+1) \Delta}f_1(\hat{X}_{k\Delta}^{(\varepsilon,\delta)},\tilde{Y}_s^{(\varepsilon,\delta)})-\bar{f}_1(\hat{X}_{k \Delta}^{(\varepsilon,\delta)})  \d s\bigg|^2\bigg]+C\Delta^2 \\
		& 	\leq  C \frac{\delta^2}{\Delta^2} \max _{0 \leq k \leq\left\lfloor\frac{T}{\Delta}\right\rfloor-1} \int_{0}^{\frac{\Delta}{\delta}} \int_{\zeta}^{\frac{\Delta}{\delta}} \mathcal{J}_{k}(s, \zeta) \d s \d \zeta+C \Delta^2,
		\end{align*}
	where \( 0 \leq \zeta \leq s \leq \frac{\Delta}{\delta} \) and 
	\begin{equation*}\label{EQ33}
		\mathcal{J}_{k}(s, \zeta):=\mathbb{E}[\langle f_{1}(\hat{X}_{k \Delta}^{(\varepsilon, \delta)}, \tilde{Y}_{s \delta+k \Delta}^{(\varepsilon, \delta)})-\bar{f}_{1}(\hat{X}_{k \Delta}^{(\varepsilon, \delta)}), f_{1}(\hat{X}_{k \Delta}^{(\varepsilon, \delta)}, \tilde{Y}_{\zeta \delta+k \Delta}^{(\varepsilon, \delta)})-\bar{f}_{1}(\hat{X}_{k \Delta}^{(\varepsilon, \delta)})\rangle].
	\end{equation*}
To proceed, let us consider the following equation, 
\begin{equation}\label{EQ:0711:01}
    \begin{cases}
        \d Y_t=\frac{1}{\delta} f_2(X,Y_t)\d t+\frac{1}{\sqrt{\delta}}\sigma_2(X,Y_t) \d W_t^2,\quad t\geq s,\\
        Y_s =Y,
    \end{cases}
\end{equation}
where $X,Y$ are $\mathscr{F}_s$-measurable $\R^d$-valued random variables. Denote by $\{\bar{Y}_{s,t}^X(Y)\}_{t\geq s}$ the unique solution of SDE \eqref{EQ:0711:01}. For each $k\in \N,$ and $t\in [k\Delta,(k+1)\Delta],$ we have 
\begin{equation*}\label{EQ:0711:02}
    \tilde{Y}_t^{(\varepsilon,\delta)} =\bar{Y}^{\hat{X}^{(\varepsilon,\delta)}_{k\Delta}}_{k\Delta,t}(\tilde{Y}^{(\varepsilon,\delta)}_{k\Delta}).
\end{equation*}
Since $\hat{X}^{(\varepsilon,\delta)}_{k\Delta}$ and $\tilde{Y}^{(\varepsilon,\delta)}_{k\Delta}$ are $\mathscr{F}_{k\Delta}$-measurable and $\{Y^x_{k\Delta,s\delta+k\Delta}(y)\}_{s\geq 0}$ is independent of $\mathscr{F}_{k\Delta}$ for each fixed $x,y\in \R^d,$  then
\begin{equation*}\label{EQ:0711:03}
    \begin{split}
        \mathcal{J}_k&(s,\zeta) \\
        &= \E[\E[\langle f_1(\hat{X}^{(\varepsilon,\delta)}_{k\Delta},\bar{Y}^{\hat{X}^{(\varepsilon,\delta)}_{k\Delta}}_{k\Delta,k\Delta+s\delta}(\tilde{Y}^{(\varepsilon,\delta)}_{k\Delta}))-\bar{f}_1(\hat{X}^{(\varepsilon,\delta)}_{k\Delta}), f_1(x,\bar{Y}^{\hat{X}^{(\varepsilon,\delta)}_{k\Delta}}_{k\Delta,k\Delta+\zeta\delta}(\tilde{Y}^{(\varepsilon,\delta)}_{k\Delta}))-\bar{f}_1(\hat{X}^{(\varepsilon,\delta)}_{k\Delta}\rangle ]]\\
        & = \E[\E[\langle f_1(\hat{X}^{(\varepsilon,\delta)}_{k\Delta},\bar{Y}^{\hat{X}^{(\varepsilon,\delta)}_{k\Delta}}_{k\Delta,k\Delta+s\delta}(\tilde{Y}^{(\varepsilon,\delta)}_{k\Delta}))-\bar{f}_1(\hat{X}^{(\varepsilon,\delta)}_{k\Delta}),\\
        &\qquad\qquad f_1(x,\bar{Y}^{\hat{X}^{(\varepsilon,\delta)}_{k\Delta}}_{k\Delta,k\Delta+\zeta\delta}(\tilde{Y}^{(\varepsilon,\delta)}_{k\Delta}))-\bar{f}_1(\hat{X}^{(\varepsilon,\delta)}_{k\Delta}\rangle \mid \mathscr{F}_{k\Delta}]]\\
        & = \E[\E[\langle f_1(x,\bar{Y}^{x}_{k\Delta,k\Delta+s\delta}(y))-\bar{f}_1(x),f_1(x,\bar{Y}^{x}_{k\Delta,k\Delta+\zeta\delta}(y))-\bar{f}_1(x)\rangle\big|_{(x,y)=(\hat{X}^{(\varepsilon,\delta)}_{k\Delta},\tilde{Y}^{(\varepsilon,\delta)}_{k\Delta})}]].
    \end{split}
\end{equation*}
 Since $\{\bar{Y}^x_{k\Delta,s\delta+k\Delta}(y)\}$ is the solution of SDE \eqref{EQ:0711:01}, it follows that 
 \begin{equation*}
     \begin{split}
   \bar{Y}^x_{k\Delta,s\delta+k\Delta}(y) & = y+\frac{1}{\delta}\int_{k\Delta}^{k\Delta+s\delta} f_2(x,\bar{Y}^x_{k\Delta,r}(y))\d r+ \frac{1}{\sqrt{\delta}}\int_{k\Delta}^{k\Delta+s\delta} \sigma_2(x,\bar{Y}^x_{k\Delta,r}(y)) \d W_r^2\\
   & =y+\frac{1}{\delta}\int_{0}^{s\delta} f_2(x,\bar{Y}^x_{k\Delta,r+k\Delta}(y))\d r +\frac{1}{\sqrt{\delta}}\int_{0}^{s\delta}\sigma_2(x,\bar{Y}^x_{k\Delta,r+k\Delta}(y))\d W^{2,k\Delta}_r\\
   & = y+ \int_{0}^{s}f_2(x,\bar{Y}^x_{k\Delta,r\delta+k\Delta}(y))\d r+\int_{0}^s\sigma_2(x,\bar{Y}^x_{k\Delta,r\delta+k\Delta}(y))\d \bar{W}^{2,k\Delta}_r,
     \end{split}
 \end{equation*}
 where $W^{2,k\Delta}_r:=W_{r+k\Delta}^2-W_{k\Delta}^2$ and $\bar{W}^{2,k\Delta}_r:=\frac{1}{\sqrt{\delta}}W^{2,k\Delta}_{r\delta}.$
The uniqueness of the solution to the frozen process \eqref{EQ05} yields that 
\begin{equation*}
    \{\bar{Y}^x_{k\Delta,r\delta+k\Delta}(y)\}_{0\leq s\leq \frac{\Delta}{\delta}} \stackrel{d}{=} \{Y_s^x(y)\}_{0\leq s\leq \frac{\Delta}{\delta}},
\end{equation*}
where \( \stackrel{d}{=}\) denotes coincidence in the distribution sense. Thus, we can rewrite $J_k(s,\zeta)$ as follows
\begin{equation}\label{EQ:0711:05}
    \begin{split}
      \mathcal{J}_k(s,\zeta)
      &= \E[\E[\langle f_1(x,Y^{x}_{s}(y))-\bar{f}_1(x),f_1(x,Y^{x}_{\zeta}(y))-\bar{f}_1(x)\rangle\big|_{(x,y)=(\hat{X}^{(\varepsilon,\delta)}_{k\Delta},\tilde{Y}^{(\varepsilon,\delta)}_{k\Delta})}]]\\
      &=: \E[\mathcal{J}_k(s,\zeta,x,y)\big|_{(x,y)=(\hat{X}^{(\varepsilon,\delta)}_{k\Delta},\tilde{Y}^{(\varepsilon,\delta)}_{k\Delta})}],
    \end{split}
\end{equation}
for any \( 0 \leq \zeta \leq s \leq \frac{\Delta}{\delta} \). Then due to the Markov property and the time-homogeneity of process $Y^{x}_{\cdot}(y),$  we obtain
\begin{equation*}
    \begin{split}
		\mathcal{J}_{k}(s, \zeta, x, y)
		& =\mathbb{E}[\mathbb{E}[\langle f_{1}(x, Y_{s}^{x}(y))-\bar{f}_{1}(x),(f_{1}(x, Y_{\zeta}^{x}(y))-\bar{f}_{1}(x))\rangle\mid\mathscr{F}_{\zeta}^{x}]] \\
		& =\mathbb{E}[\langle  \mathbb{E}[(f_{1}( x, Y_{s}^{x}(y))-\bar{f}_{1}(x)) \mid \mathscr{F}_{\zeta}^{x}],f_{1}\left( x, Y_{\zeta}^{x}(y)\right)-\bar{f}_{1}(x)\rangle]\\
  &=\mathbb{E}\left[ \left\langle \mathbb{E}(f_{1}( x, Y_{s-\zeta}^{x}(Y_{\zeta}^{x}(y)))-\bar{f}_{1}(x)),f_{1}(x, Y_{\zeta}^{x}(y))-\bar{f}_{1}(x)\right\rangle\right].
    \end{split}
\end{equation*}
 Furthermore, using  (\hyperlink{H1}{H1}), \eqref{EQ25} and Lemma \ref{THM05}, we have
	\begin{align}\label{EQ30}
		\nonumber \mathcal{J}_{k}&(s, \zeta, x, y)\\
  \nonumber & \leq  \mathbb{E}\left[\left|\mathbb{E}(f_{1}( x, Y_{s-\zeta}^{x}(Y_{\zeta}^{x}(y))-\bar{f}_{1}( x))\right|\cdot\left|f_{1}\left( x, Y_{\zeta}^{x}(y)\right)-\bar{f}_{1}(x)\right|\right]\\
  & \leq C\E\left[\rho_{\eta_1}\big(\e^{-\frac{\beta_{1}}{2}(s-\zeta)}(1+|x|+|Y_{\zeta}^{x}(y)|)\big)\big (1+\rho_{\eta_1}(|x|+|Y_{\zeta}^{x}(y)|)+\rho_{\eta_1}(|x|)\big)\right]\\
  \nonumber&\leq C\E\left[\rho_{\eta_1}\big(\e^{-\frac{\beta_{1}}{2}(s-\zeta)}(1+|x|+|Y_{\zeta}^{x}(y)|)\big)\right]+C\E\left[\rho_{\eta_1}^2\big(\e^{-\frac{\beta_{1}}{2}(s-\zeta)}(1+|x|+|Y_{\zeta}^{x}(y)|)\big)\right]\\
  \nonumber &\leq C\rho_{\eta_1}\big(\e^{-\frac{\beta_{1}}{2}(s-\zeta)}(1+|x|+\E[|Y_{\zeta}^{x}(y)|])\big)+C\rho_{0,\eta_1}\Big(\e^{-\beta_{1}(s-\zeta)}(1+|x|^2+\E[|Y_{\zeta}^{x}(y)|^2])\Big)\\
  \nonumber &\leq C\rho_{\eta_1}\big(\e^{-\frac{\beta_{1}}{2}(s-\zeta)}(1+|x|+|y|)\big)+C\rho_{0,\eta_1}\Big(\e^{-\beta_{1}(s-\zeta)}(1+|x|^2+|y|^2)\Big)
	\end{align}
	where $\rho_{0,\eta_1}$  is the corresponding transform function of $\rho_{\eta_1}$ in Lemma \ref{LEM:CONCAVE:01} and \eqref{EQ24} has been used in the last inequality. With the aid of \eqref{EQ:0711:05} and \eqref{EQ30}  yields that
	\begin{equation*}\label{EQ32}
	\begin{split}
	    	\mathcal{J}_{k}(s, \zeta)& \leq C \mathbb{E}[\rho_{\eta_1}(\e^{-\frac{\beta_{1}}{2}(s-\zeta)} )(1+|\hat{X}_{k \Delta}^{(\varepsilon,\delta)}|+|\tilde{Y}_{k \Delta}^{(\varepsilon,\delta)}|) ]\\
      &\quad+C\E\left[\rho_{0,\eta_1}\Big(\e^{-\beta_{1}(s-\zeta)}(1+|\hat{X}_{k \Delta}^{(\varepsilon,\delta)}|^2+|\tilde{Y}_{k \Delta}^{(\varepsilon,\delta)}|^2)\Big)\right]\\
      & \leq C\rho_{\eta_1}(\e^{-\frac{\beta_{1}}{2}(s-\zeta)} (1+\E[|\hat{X}_{k \Delta}^{(\varepsilon,\delta)}|]+\E[|\tilde{Y}_{k \Delta}^{(\varepsilon,\delta)}|]) )\\
      &\quad+C\rho_{0,\eta_1}\Big(\e^{-\beta_{1}(s-\zeta)}(1+\E[|\hat{X}_{k \Delta}^{(\varepsilon,\delta)}|^2]+\E[|\tilde{Y}_{k \Delta}^{(\varepsilon,\delta)}|^2])\Big)\\
      &\leq C\rho_{0,\eta_1}(\e^{-\beta_{1}(s-\zeta)})+C\rho_{0,\eta_1}(\e^{-\beta_{1}(s-\zeta)})
	\end{split}
	\end{equation*}
	Combining the above equation, due to Jensen's inequality  and the concavity of $ \rho_{\eta_1}(\cdot), $ 
	%\begin{equation*}\label{EQ43}
		\begin{align*}
			J_{2,1} & \leq C \frac{\delta^2}{\Delta^2} \max _{0 \leq k \leq\left\lfloor\frac{T}{\Delta}\right\rfloor-1} \int_{0}^{\frac{\Delta}{\delta}} \int_{\zeta}^{\frac{\Delta}{\delta}} \mathcal{J}_{k}(s, \zeta) \d s \d \zeta+C\Delta^2 \\
			& \leq C \frac{\delta^2}{\Delta^2} \int_{0}^{\frac{\Delta}{\delta}} \int_{\zeta_1}^{\frac{\Delta}{\delta}}\Big(\rho_{\eta_1}(\e^{-\frac{\beta_{1}}{2}(s-\zeta)})+ \rho_{0,\eta_1}(\e^{-\beta_{1}(s-\zeta)})\Big)\d s \d \zeta+C \Delta^2 \\
			& \leq C\left(\rho_{\eta_1}\Big(\frac{2\delta^2}{\Delta^2}\int_{0}^{\frac{\Delta}{\delta}}\int_{\zeta}^{\frac{\Delta}{\delta}}\e^{-\frac{\beta_{1}}{2}(s-\zeta)}\d s \d \zeta \Big)+\rho_{0,\eta_1}\Big(\frac{2\delta^2}{\Delta^2}\int_{0}^{\frac{\Delta}{\delta}}\int_{\zeta}^{\frac{\Delta}{\delta}}\e^{-\beta_{1}(s-\zeta)}\d s \d \zeta \Big)\right)+C\Delta^2\\
			& \leq C\left(\rho_{\eta_1}\Big(\frac{2\delta^2}{\Delta^2}\Big(\frac{2}{\beta_{1}} \frac{\Delta}{\delta}-\frac{4}{\beta_{1}^{2}}+\e^{-\frac{\beta_{1}}{2} \frac{\Delta}{\delta}}\Big)\Big)+\rho_{0,\eta_1}\Big(\frac{2\delta^2}{\Delta^2}\Big(\frac{1}{\beta_{1}} \frac{\Delta}{\delta}-\frac{1}{\beta_{1}^{2}}+\e^{-\beta_{1}\frac{\Delta}{\delta}}\Big)\right)+C\Delta^2\\
			& \leq C(\rho_{\eta_1}({\delta}{\Delta}^{-1})+\rho_{0,\eta_1}({\delta}{\Delta}^{-1})+\Delta^2),
		\end{align*}
	where the final inequality comes from choosing \( \Delta=\Delta(\delta) \) so that \( {\delta}{\Delta}^{-1} \) is sufficiently small.
	
Subsequently, it proceeds to estimate $ J_{2,2} $. It follows from Lemmas \ref{THM17} and \ref{LEM:0730:01} that
	\begin{align}\label{EQ:0711:06}
			J_{2,2}
   \nonumber & \leq  C \E \bigg|  \int_{0}^{t} \bar{f}_1(\hat{X}_{s(\Delta)}^{(\varepsilon,\delta)})-  \bar{f}_1(\hat{X}_s^{(\varepsilon,\delta)})\d s\bigg|^2 + C \E \bigg|  \int_{0}^{t} \bar{f}_1(\hat{X}_{s}^{(\varepsilon,\delta)})-  \bar{f}_1(\tilde{X}_s^{(\varepsilon,\delta)})\d s\bigg|^2\\ 
	\nonumber		& \qquad + C \E \bigg|  \int_{0}^{t} \bar{f}_1(\tilde{X}_{s}^{(\varepsilon,\delta)})-  \bar{f}_1(X_s^{h^{(\varepsilon,\delta)}})\d s\bigg|^2\\
 \nonumber  &\leq C \int_{0}^{t} \mathbb{E}[|\bar{f}_{1}(\hat{X}_{s(\Delta)}^{(\varepsilon, \delta)})-\bar{f}_{1}(\hat{X}_{s}^{(\varepsilon, \delta)})|^2] \d s+C \int_{0}^{t} \mathbb{E}[|\bar{f}_{1}(\hat{X}_{s}^{(\varepsilon, \delta)})-\bar{f}_{1}(\tilde{X}_{s}^{(\varepsilon, \delta)})|^2] \d s \\
    \nonumber &\qquad +C   \int_{0}^{t} \E[|\bar{f}_1(\tilde{X}_{s}^{(\varepsilon,\delta)})-  \bar{f}_1(X_s^{h^{(\varepsilon,\delta)}})|^2]\d s\\
	\nonumber	& \leq C \int_{0}^{t} \mathbb{E}\left[\rho_{\eta_1}^2(|\hat{X}_{s(\Delta)}^{(\varepsilon, \delta)}-\hat{X}_{s}^{(\varepsilon, \delta)}|)+\rho_{\eta_1}^2(|\hat{X}_{s}^{(\varepsilon, \delta)}-\tilde{X}_{s}^{(\varepsilon, \delta)}|)+\rho_{\eta_1}^2(|\tilde{X}_{s}^{(\varepsilon,\delta)}-  X_s^{h^{(\varepsilon,\delta)}}|) \right] \d s \\
		& \leq C \int_{0}^{t} \mathbb{E}[\rho_{0,\eta_1}(|\hat{X}_{s}^{(\varepsilon, \delta)}-\hat{X}_{s(\Delta)}^{(\varepsilon, \delta)}|^2)+\rho_{0,\eta_1}(|\tilde{X}_{s}^{(\varepsilon, \delta)}-\hat{X}_{s}^{(\varepsilon, \delta)}|^2)\\
  \nonumber &\qquad\qquad+\rho_{0,\eta_1}(|\tilde{X}_{s}^{(\varepsilon,\delta)}-  X_s^{h^{(\varepsilon,\delta)}}|^2)]\d s\\
	\nonumber	& \leq C \int_{0}^{t} \left(\rho_{0,\eta_1}(\E[|\hat{X}_{s}^{(\varepsilon, \delta)}-\hat{X}_{s(\Delta)}^{(\varepsilon, \delta)}|^2])+\rho_{0,\eta_1}(\E[|\tilde{X}_{s}^{(\varepsilon, \delta)}-\hat{X}_{s}^{(\varepsilon, \delta)}|^2])\right. \\
   \nonumber &\qquad\qquad\left.+\rho_{0,\eta_1}(\E [|\tilde{X}_{s}^{(\varepsilon,\delta)}-  X_s^{h^{(\varepsilon,\delta)}}|^2])\right)\d s\\
		\nonumber & \leq C\left(\rho_{0,\eta_1}( \Delta)+\rho_{0,\eta_1}(R(\varepsilon,\delta,\Delta)\right)+C \int_{0}^{t} \rho_{0,\eta_1}(\E [|\tilde{X}_{s}^{(\varepsilon,\delta)}-  X_s^{h^{(\varepsilon,\delta)}}|^2])\d s,
 \end{align}
where 
\[
R(\varepsilon,\delta,\Delta)=\Big(\varepsilon+\rho_{0,\eta_1}(\Delta )+\rho_{0,\eta_1}(\Delta+\delta/\varepsilon)\Big)
            +\Big(\varepsilon+\rho_{0,\eta_1}(\Delta )+\rho_{0,\eta_1}(\Delta+\delta/\varepsilon)\Big)^{\exp \{-CT\}}
\]
 tends to 0 as $\varepsilon, \Delta\to 0$.
	%By the non-Lipschitz continuity of $ \bar{f}_1 $ , we also have 
	%\begin{equation}\label{EQ42}
	%	\begin{split}
	%		J_{2,2}^3  & \leq  C\int_{0}^{t} \E \rho_{\eta}^2(|\tilde{X}_{s}^{(\varepsilon,\delta)}-  X_s^{h^{(\varepsilon,\delta)}}|) \d s  \leq C \int_{0}^{t} \rho_{0,\eta}(\E |\tilde{X}_{s}^{(\varepsilon,\delta)}-  X_s^{h^{(\varepsilon,\delta)}}|^2)\d s.
		%\end{split}
	%\end{equation}
	 
  For $ J_{2,3}, $ by \hold's inequality,
	\begin{align}\label{EQ36}
			J_{2,3} \nonumber  & = \E\left|\int_{0}^{t} [\sigma_{1} (\tilde{X}_s^{(\varepsilon,\delta)}) -\sigma_{1}(X_s^{h^{(\varepsilon,\delta)}})] \dot{h}_1^{(\varepsilon,\delta)}(s)\d s \right|^2 \\ 
            \nonumber & \leq \E \left\{\left[\int_{0}^{t} \|\sigma_{1}(\tilde{X}_s^{(\varepsilon,\delta)})-\sigma_{1}(X_s^{h^{(\varepsilon,\delta)}})\|^2\d s \right] \cdot\left[\int_{0}^{t}|\dot{h}_1^{(\varepsilon,\delta)}(s)|^2\d s\right]\right\}\\ 
			&\leq {N^2}\int_{0}^{t} \E [\|\sigma_{1}(\tilde{X}_s^{(\varepsilon,\delta)})-\sigma_{1}(X_s^{h^{(\varepsilon,\delta)}})\|^2]\d s\\
			\nonumber  &\leq N^2\int_{0}^{t} \rho_{\eta_3}(\E [|\tilde{X}_{s}^{(\varepsilon,\delta)}-  X_s^{h^{(\varepsilon,\delta)}}|^2])\d s.
	\end{align}
	Combining the above calculations, we have 
	\begin{equation*}
		\begin{split}
			J_2   & \leq C(\rho_{\eta_1}({\delta}{\Delta}^{-1})+\rho_{0,\eta_1}({\delta}{\Delta}^{-1})+\Delta^2)+C\left(\rho_{0,\eta_1}( \Delta)+\rho_{0,\eta_1}(R(\varepsilon,\delta,\Delta)\right)\\
			& \quad +C \int_{0}^{t} \rho_{0,\eta_1}(\E [|\tilde{X}_{s}^{(\varepsilon,\delta)}-  X_s^{h^{(\varepsilon,\delta)}}|^2])\d s+ N^2\int_{0}^{t} \rho_{\eta_3}(\E [|\tilde{X}_{s}^{(\varepsilon,\delta)}-  X_s^{h^{(\varepsilon,\delta)}}|^2])\d s\\
			&\leq C(\rho_{\eta_1}({\delta}{\Delta}^{-1})+\rho_{0,\eta_1}({\delta}{\Delta}^{-1})+\Delta^2)+C\left(\rho_{0,\eta_1}( \Delta)+\rho_{0,\eta_1}(R(\varepsilon,\delta,\Delta)\right)\\
			& \quad +C\int_{0}^{t} \rho_{0,\eta_1}(\E [|\tilde{X}_{s}^{(\varepsilon,\delta)}-  X_s^{h^{(\varepsilon,\delta)}}|^2])\d s,\\
		\end{split}
	\end{equation*}
	where  we have used the fact $\rho_{\eta_3}(x)\leq C\rho_{0,,\eta_1}(x)$. By Lemma \ref{LEM:0801:02}, we can deduce that
	\begin{equation}\label{EQ62}
 \begin{split}
\E[|\tilde{X}_{t}^{(\varepsilon, \delta)}-{X}_{t}^{h^{(\varepsilon,\delta)}}|^2] & \leq  G^{-1}\Big(G\big(C\left(\rho_{\eta_1}({\delta}{\Delta}^{-1})+\rho_{0,\eta_1}({\delta}{\Delta}^{-1})\right.\\
&\qquad\qquad+\Delta^2+\rho_{0,\eta_1}( \Delta)+\rho_{0,\eta_1}(R(\varepsilon,\delta,\Delta))\big)\big)+CT\Big).
  \end{split}
	\end{equation}  
 The proof is complete.
\end{proof}
 Based on the above two lemmas, we have the following core estimate of the controlled slow component from two different initial values.
 \begin{proposition}\label{THM18}
		Suppose that (\hyperlink{(H1)}{H1})-(\hyperlink{(H3)}{H3}) hold.  Let \( N\geq 1 \) and \( f \) be a smooth non-decreasing function from  \( \mathbb{R}^{+} \) to  \( \mathbb{R}^{+} \) satisfying
			\[
			f^{\prime}(x) \leq 1 ; \quad f(x)=x,\; 0 \leq x<\frac{1}{4} ; \quad f(x)=1,\; x \geq 2 .
		\] 
		 Then there exist positive numbers \( p \) and \( C\)  independent of \( ~\varepsilon \) such that
		\[
		\mathbb{E}\left[\left(f(|\hat{X}_{t}^{(\varepsilon, \delta)}(x_1)-\hat{X}_{t}^{(\varepsilon,\delta)}(x_2)|)\right)^{p}\right] \leq C|x_1-x_2|^{2(d+1)},
		\]
		for all \( \varepsilon  \) sufficiently small and  $t\in [0,T]$.
	\end{proposition}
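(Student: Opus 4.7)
The plan is to apply It\^o's formula to $\Phi(Z_t):=\bigl(f(|Z_t|)\bigr)^p$ with $Z_t:=\hat X_t^{(\varepsilon,\delta)}(x_1)-\hat X_t^{(\varepsilon,\delta)}(x_2)$ and $p\geq 2$ to be chosen large at the end, then convert the non-Lipschitz estimates of (\hyperlink{(H1)}{H1}) and Remark \ref{RMK:01} into a $\rho_{\eta_1}$-type integral inequality, and finally invoke Lemma \ref{THM02}-(2) to extract a quantitative continuity in the initial datum. The cutoff $f$ is the essential technical device: because $f\le 1$, $f'\le 1$ and $f(r)=r$ for $r<1/4$, the function $\Phi$ together with its first two derivatives is uniformly bounded and the map $z\mapsto(f(|z|))^p$ is $C^2$ on $\R^d$, so logarithmic moduli of continuity cannot drive $\Phi(Z_t)$ off to infinity even though $|Z_t|$ itself has only a logarithmic modulus.

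It\^o's expansion of $\Phi(Z_t)$ produces three kinds of contributions from the $\hat X^{(\varepsilon,\delta)}$-dynamics. The controlled drift $[\sigma_1(\hat X(x_1))-\sigma_1(\hat X(x_2))]\dot h_1^{(\varepsilon,\delta)}$ and the $\varepsilon$-weighted quadratic variation of the Brownian martingale can be handled by (\hyperlink{(H1)}{H1}) and Remark \ref{RMK:01}: after multiplication by the bounded $\Phi'$ or $\Phi''$ and the use of $|Z_s|^2 g_2(|Z_s|)\le C\rho_{\eta_1}(|Z_s|^p)$, each is dominated by $C(1+|\dot h_1^{(\varepsilon,\delta)}(s)|^2)\rho_{\eta_1}(\Phi(Z_s))+C\varepsilon$. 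For the $f_1$-drift I would split
\[
f_1(\hat X_s(x_1),\hat Y_s(x_1))-f_1(\hat X_s(x_2),\hat Y_s(x_2))=\bigl[f_1(\hat X_s(x_1),\hat Y_s(x_1))-f_1(\hat X_s(x_2),\hat Y_s(x_1))\bigr]+\bigl[f_1(\hat X_s(x_2),\hat Y_s(x_1))-f_1(\hat X_s(x_2),\hat Y_s(x_2))\bigr],
\]
the first summand being controlled by $\rho_{\eta_1}(|Z_s|^p)$ and the second by $\rho_{\eta_1}(|U_s|^p)$, where $U_s:=\hat Y_s(x_1)-\hat Y_s(x_2)$.

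The main obstacle is the $U_s$-contribution, because the controlled fast equation carries the singular factor $1/\sqrt{\varepsilon\delta}$ in front of $\dot h_2^{(\varepsilon,\delta)}$ so that no uniform bound on $\E|U_s|^2$ is immediately available. The strategy is to establish the companion estimate $\sup_{s\le T}\E|U_s|^2\le C|x_1-x_2|^2$ by applying It\^o's formula to $|U_s|^2$, exploiting the strong dissipativity (\hyperlink{(H2)}{H2}) after inserting the intermediate point $\sigma_2(\hat X_s(x_1),\hat Y_s(x_2))$ to split $\sigma_2$-differences into a $y$-part controlled by (\hyperlink{(H2)}{H2}) and an $x$-part controlled by (\hyperlink{(H3)}{H3}) and the local Lipschitz regularity of $\sigma_2$, and absorbing the $\dot h_2^{(\varepsilon,\delta)}$-term through Young's inequality with a suitably small parameter exactly as in the proof of Lemma \ref{LEM:0729:01}. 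The fact that $U_0=0$ is essential so that the $|x_1-x_2|^2$ scaling is preserved.

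Putting the pieces together, taking expectations, using concavity of $\rho_{\eta_1}$ together with Jensen's inequality, and using $\int_0^T|\dot h_1^{(\varepsilon,\delta)}(s)|^2\d s\le N^2$ almost surely for $h^{(\varepsilon,\delta)}\in\mathcal A_N$, I expect to arrive at an integral inequality of the form
\[
\E\Phi(Z_t)\leq \Phi(Z_0)+C\int_0^t\bigl(1+|\dot h_1^{(\varepsilon,\delta)}(s)|^2\bigr)\rho_{\eta_1}\bigl(\E\Phi(Z_s)\bigr)\d s+C\varepsilon,
\]
to which Lemma \ref{THM02}-(2) applies and yields $\E\Phi(Z_t)\le \bigl(\Phi(Z_0)\bigr)^{\exp\{-C(1+N^2)T\}}$ once $|x_1-x_2|$ is small enough. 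Since $f(r)=r$ on $[0,1/4)$, this equals $|x_1-x_2|^{p\exp\{-C(1+N^2)T\}}$, and choosing $p$ so large that $p\exp\{-C(1+N^2)T\}\geq 2(d+1)$ produces the desired bound on the small-difference regime; for $|x_1-x_2|\geq 1/4$ the bound $\Phi\leq 1$ makes the statement trivial after enlarging the constant.
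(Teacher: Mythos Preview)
Your approach is genuinely different from the paper's, and it has a real gap at step 4.

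\textbf{Where the argument breaks.} When you apply It\^o's formula to $|U_t|^2$ and invoke (\hyperlink{(H2)}{H2}), the term $C|x_1-x_2|^2$ on the right-hand side of (\hyperlink{(H2)}{H2}) is evaluated at the \emph{running} slow variables, i.e.\ it becomes $C|\hat X_s^{(\varepsilon,\delta)}(x_1)-\hat X_s^{(\varepsilon,\delta)}(x_2)|^2=C|Z_s|^2$, not $C|x_1-x_2|^2$ for the initial data. The argument of Lemma~\ref{LEM:0729:01} that you cite produces exactly this: it yields
\[
\E\!\int_0^T|U_s|^2\,\d s\;\le\;C\!\int_0^T\E|Z_s|^2\,\d s+C\,\delta/\varepsilon,
\]
not $\sup_s\E|U_s|^2\le C|x_1-x_2|^2$. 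So the $U$-contribution feeds back into the $Z$-estimate through $\E|Z_s|^2$, and your displayed integral inequality for $\E\Phi(Z_t)$ is not closed: you would need $\rho_{\eta_1}(\E|Z_s|^2)$ on the right, but the quantity you are tracking is $\E\Phi(Z_s)=\E(f(|Z_s|))^p$, and because $f\le 1$ is a cutoff, $\E\Phi(Z_s)$ does \emph{not} dominate $\E|Z_s|^2$. The Bihari step therefore cannot be executed.

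\textbf{How the paper handles this.} The paper avoids the $Z\leftrightarrow U$ coupling altogether by routing through the auxiliary process $\tilde X^{(\varepsilon,\delta)}$ and the skeleton $X^{h^{(\varepsilon,\delta)}}$: one writes
\[
f(|\hat X(x_1)-\hat X(x_2)|)\le f(|\hat X(x_1)-\tilde X(x_1)|)+f(|\tilde X(x_1)-X^h(x_1)|)+f(|X^h(x_1)-X^h(x_2)|)+\cdots
\]
and uses $(f(r))^p\le r^2$ on the first, second, fourth and fifth pieces together with Lemmas~\ref{LEM:0730:01}--\ref{LEM:0730:02}; those pieces vanish as $\varepsilon\to0$ uniformly in $x_1,x_2$. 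The only place where $x_1$ and $x_2$ interact is in the middle term $|X^{h^{(\varepsilon,\delta)}}(x_1)-X^{h^{(\varepsilon,\delta)}}(x_2)|$, which obeys the deterministic ODE~\eqref{EQ06} with averaged drift $\bar f_1$ and \emph{no} fast variable. The paper then differentiates $(f(|Z_t|))^{p(t)}$ with a time-varying exponent $p(t)=d+1+t+\int_0^t|\dot h^{(\varepsilon,\delta)}|\,\d s$, so that for small $|Z_s|$ the logarithmic singularity of $\bar f_1$ and $\sigma_1$ is exactly cancelled by the extra $\log f(|Z_s|)$ factor produced by $\dot p(t)$; this yields \eqref{EQ61} directly, without Bihari. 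The detour through the skeleton process is precisely what decouples the slow increment from the fast one and is the idea your direct approach is missing.
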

	\begin{proof}
		Notice that $ f(\cdot) $ satisfies that
	\[ 
	f(x+y)\leq f(x)+f(y),
	\] 
  for any $x,y\in \mathbb{R^+}.$  Let $p\geq 2$ be a positive number to be determined later. Then by $C_r$-inequality, we have that 
  \begin{equation}\label{EQ:0801:01}
	\begin{aligned}
	 	\E &\left[\left(f(|\hat{X}_{t}^{(\varepsilon, \delta)}(x_1)-\hat{X}_{t}^{(\varepsilon,\delta)}(x_2)|) \right)^p \right]\\
  & \leq 5^{p-1}\E\left[ \left(f(|\hat{X}_{t}^{(\varepsilon, \delta)}(x_1)-\tilde{X}_{t}^{(\varepsilon,\delta)}(x_1)|)\right)^p +\left( f(|\tilde{X}_{t}^{(\varepsilon, \delta)}(x_1)-{X}_{t}^{h^{(\varepsilon,\delta)}}(x_1)|) \right)^p\right.\\
		&\quad + \left(f(|{X}_{t}^{h^{(\varepsilon, \delta)}}(x_1)-{X}_{t}^{h^{(\varepsilon,\delta)}}(x_2)|)\right)^p+ \left(f(|\tilde{X}_{t}^{(\varepsilon, \delta)}(x_2)-{X}_{t}^{h^{(\varepsilon,\delta)}}(x_2)|) \right)^p\\
		& \quad + \left. \left(f(|\hat{X}_{t}^{(\varepsilon, \delta)}(x_2)-\tilde{X}_{t}^{(\varepsilon,\delta)}(x_2)|)\right)^p\right] \\
		& \leq 5^{p-1}\left(\E[ |\hat{X}_{t}^{(\varepsilon, \delta)}(x_1)-\tilde{X}_{t}^{(\varepsilon,\delta)}(x_1)|^2 +\E[|\tilde{X}_{t}^{(\varepsilon, \delta)}(x_1)-{X}_{t}^{h^{(\varepsilon,\delta)}}(x_1)|^2]\right.\\
		&\quad + \E\left[\left(f(|{X}_{t}^{h^{(\varepsilon, \delta)}}(x_1)-{X}_{t}^{h^{(\varepsilon,\delta)}}(x_2)|)\right)^p\right]+ \E[|\tilde{X}_{t}^{(\varepsilon, \delta)}(x_2)-{X}_{t}^{h^{(\varepsilon,\delta)}}(x_2)|^2]\\
		& \quad\left. + \E[ |\hat{X}_{t}^{(\varepsilon, \delta)}(x_2)-\tilde{X}_{t}^{(\varepsilon,\delta)}(x_2)|^2]\right),\\
	\end{aligned}
 \end{equation}
	where we have used $ (f(x))^p\leq x^2$, for any $x\geq 0,$  $p\geq 2$  in the second inequality and ${X}_{t}^{h^{(\varepsilon,\delta)}}$ is defined by ODE \eqref{EQ06}  with $h=h^{(\varepsilon,\delta)}$.

 First, we need to estimate the first term (or the last term)  and the second term (or the fourth term), which have been given in Lemmas \ref{LEM:0730:01} and \ref{LEM:0730:02}, respectively. Therefore, we only need to estimate the third term. 
 %In what follows,  we only estimate $J_1$ and omit its initial value $x_1$.  Next, we can see that the core estimate is the $  J_3.$ 
 Set
	\[	p(t):=d+1+t+\int_{0}^{t}|\dot{h}^{(\varepsilon,\delta)}(s)| \d s.
	\]
	Since \( h^{(\varepsilon,\delta)} \in \mathcal{A}_{N} \) , we have that
	\[
	d+1 \leq p(t) \leq d+1+T+\sqrt{T}N=: A_{T, N}, \quad \forall t \in[0, T] .
	\]
	By \eqref{EQ27}, we can choose \( \delta_{1}=\delta_{1}(T, N) \) small enough such that
	\begin{equation}\label{EQ28}
		|\bar{f}_1(x)-\bar{f}_1(y)| \leq\left\{\begin{array}{ll}
			\frac{1}{2 A_{T, N}}|x-y| \log |x-y|^{-1}, & 0<|x-y| \leq \delta_{1}, \\
			C_{T, N}|x-y|, & |x-y|>\delta_{1},
		\end{array}\right.
	\end{equation}
	and
	\begin{equation}\label{EQ50}
		CA_{T, N} \sqrt{-\log x} \leq-\log x, \quad 0<x<\delta_{1} .
	\end{equation}
	Now put
	\[
	Z_{t}:=X_{t}^{h^{(\varepsilon,\delta)}}(x_1)-X_{t}^{h^{(\varepsilon,\delta)}}(x_2).
	\]
	By Newton-Leibniz's formula, we have 
	\[ 
f\left(\left|Z_{t}\right|\right)^{p(t)}=f\left(\left|Z_{0}\right|\right)^{p(0)}+\sum_{j=1}^{3} \int_{0}^{t} \xi_{j}(s) \d s,
	\]
	where
	\[ 
	\begin{aligned}
	    \xi_{1}(s)&:=(1+|\dot{h}_1^{(\varepsilon,\delta)}(s)|) f(|Z_{s}|)^{p(s)} \log f\left(\left|Z_{s}\right|\right), \\
		\xi_{2}(s)&:=\sum_{i=1}^{d} p(s) f(|Z_{s}|)^{p(s)-1} f^{\prime}(|Z_{s}|) \frac{Z_{s}^{i}}{\left|Z_{s}\right|}\big(\bar{f}_1^{i}(X_{s}^{h^{(\varepsilon,\delta)}}(x_1))-\bar{f}_1^{i}(X_{s}^{h^{(\varepsilon,\delta)}}(x_2))\big),\\
		\xi_{3}(s)&:=\sum_{i=1}^{d} p(s) f(|Z_{s}|)^{p(s)-1} f^{\prime}(|Z_{s}|) \frac{Z_{s}^{i}}{\left|Z_{s}\right|}\langle\sigma_1^{i}(X_{s}^{h^{(\varepsilon,\delta)}}(x_1))-\sigma_1^{i}(X_{s}^{h^{(\varepsilon,\delta)}}(x_2)),\dot{h}_1^{(\varepsilon,\delta)}(s)\rangle.
	\end{aligned}
	\]
	If \( \left|Z_{s}\right|<\delta_{1} \), then by \eqref{EQ28}  we have
	\[
	\xi_{2}(s) \leq-\frac{1}{2} f\left(|Z_{s}|\right)^{p(s)} \log |Z_{s}|.
	\]
	If \( \left|Z_{s}\right|<\delta_{1} \wedge \eta_2 \), then by \eqref{EQ50} we have
	\[
	\begin{aligned}
		\xi_{3}(s) & \leq  CA_{T, N} f\left(\left|Z_{s}\right|\right)^{p(s)} \sqrt{-\log \left|Z_{s}\right|} \cdot|\dot{h}_1^{(\varepsilon,\delta)}(s)| \\
		& \leq-f\left(Z_{s}\right)^{p(s)} \log \left|Z_{s}\right| \cdot|\dot{h}_1^{(\varepsilon,\delta)}(s)|,
	\end{aligned}
	\]
	which implies \[
	\xi_{1}(s)+\xi_{2}(s) +\xi_{3}(s) \leq 0 .
	\]	where we have used the fact $ f(x)\leq x$, $\forall x \geq 0$.
 
	If \( \left|Z_{s}\right| \geq \delta_{1} \), then it is easy to see that there exists a constant \( C \) such that
	\[
	\xi_{1}(s) + \xi_{2}(s)+\xi_{3}(s) \leq C(1+|\dot{h}_1^{(\varepsilon,\delta)}(s)|) f\left(|Z_{s}|\right)^{p(s)} .
	\]
	Hence, we always have
	\[
	f\left(\left|Z_{t}\right|\right)^{p(t)} \leq f\left(\left|Z_{0}\right|\right)^{d+1}+C \int_{0}^{t}(1+|\dot{h}_1^{(\varepsilon,\delta)}(s)|) f\left(\left|Z_{s}\right|\right)^{p(s)} \d s.
	\]
	Now by Hölder's inequality, we have
	\begin{align*}
		\mathbb{E}\left(f\left(\left|Z_{t}\right|\right)^{2 p(t)}\right)  & \leq 2 f\left(\left|Z_{0}\right|\right)^{2(d+1)}+C\mathbb{E}\left(\int_{0}^{t}(1+|\dot{h}_1^{(\varepsilon,\delta)}(s)|)^{2} \d s \cdot \int_{0}^{t} f\left(\left|Z_{s}\right|\right)^{2 p(s)} \d s\right) \\
		& 	\leq 2 f\left(\left|Z_{0}\right|\right)^{2(d+1)}+ C \int_{0}^{t} \mathbb{E}\left(f\left(\left|Z_{s}\right|\right)^{2 p(s)}\right) \d s.
	\end{align*}
	Since \( f(x) \leq 1 \), we obtain by  Gronwall's inequality that
	\begin{equation*}
		\begin{aligned}		\mathbb{E}\left(f\left(\left|Z_{t}\right|\right)^{2(d+1+T+\sqrt{T N})}\right) & \leq \mathbb{E}\left(f\left(\left|Z_{t}\right|\right)^{2 p(t)}\right) \leq C|x_1-x_2|^{2(d+1) }.
		\end{aligned}
	\end{equation*}
	 Hence, letting $p=2(d+1+T+\sqrt{T N}) $ implies the desired estimate
	\begin{equation}\label{EQ61}
		\mathbb{E}(f\left(\left|Z_{t}\right|\right)^p)\leq C|x_1-x_2|^{2(d+1)}.
	\end{equation}
	
  To sum up, by Lemmas \ref{LEM:0730:01}, \ref{LEM:0730:02}, \eqref{EQ:0801:01} and \eqref{EQ61}, we see that there exist two positive numbers \( p \) and \( C\) independent of \( \varepsilon \) such that
\[
\mathbb{E}\left[\big(f(|\hat{X}_{t}^{(\varepsilon, \delta)}(x_1)-\hat{X}_{t}^{(\varepsilon,\delta)}(x_2)|)\big)^{p}\right] \leq C|x_1-x_2|^{2(d+1)},
	\]
for all \( \varepsilon  \) sufficiently small and $t\in [0,T]$. The proof is complete.
\end{proof}

	\section{Proof of Main Theorem}\label{SEC04}
	\subsection{Rate function} 	
	Define a mapping $ S: \mathbb{H}\mapsto \C_T $ according to ODE \eqref{EQ06} by 
	\begin{equation}\label{EQ:0629:01}
	    	S(h)(t,x) :=X_t^h(x),
	\end{equation}
 which is  well-defined benefit from Theorem \ref{THM07}.
	It means that for any element  $ h $ in the Cameron-Martin Space $ \mathbb{H} $,  there is a corresponding skeleton function $S(h)$.  By the definition of the Laplace principle, we need to show that $I(f)$ defined by \eqref{EQ10} is indeed a rate function. To proceed, we first give  some warm lemmas.
	\begin{lemma}\label{THM09}
	For any \( N>0 \), the set \( \left\{S(h) ;\|h\|_{\mathbb{H}} \leq N\right\} \) is relatively compact in \( \mathbb{C}_{T} \).
	\end{lemma}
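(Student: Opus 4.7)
The plan is to apply the Arzel\`a-Ascoli theorem on each compact rectangle $[0,T]\times D_M$. Since the topology on $\mathbb{C}_T$ is generated by the seminorms $\|\cdot\|_{\mathbb{C}_T^M}$, it suffices to show that for every $M\in\mathbb{N}$ the family $\{S(h)|_{[0,T]\times D_M}:\|h\|_{\mathbb{H}}\le N\}$ is uniformly bounded and equicontinuous as a subset of $\mathbb{C}_T^M=C([0,T]\times D_M;\mathbb{R}^d)$.

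First I would establish a uniform bound. From the integral form of \eqref{EQ06}, the linear growth of $\bar f_1$ (which follows from \eqref{EQ27} and Remark \ref{RMK:01}) and of $\sigma_1$, together with Cauchy--Schwarz applied to $\int_0^t|\sigma_1(X_s^h(x))||\dot h_1(s)|\,\d s\le (\int_0^T|\sigma_1(X_s^h(x))|^2\d s)^{1/2}\|h\|_{\mathbb{H}}$, Gronwall's inequality yields
\[
\sup_{|x|\le M}\sup_{t\in[0,T]}|X_t^h(x)|\le C(M,T,N),\qquad \|h\|_{\mathbb{H}}\le N.
\]

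Next I would establish equicontinuity in $t$. For $0\le s\le t\le T$ and $|x|\le M$, using the bound just obtained and H\"older's inequality in the control integral,
\[
|X_t^h(x)-X_s^h(x)|\le C(M,T,N)\bigl(|t-s|+\sqrt{|t-s|}\cdot N\bigr),
\]
which is a modulus of continuity independent of $h$ and $x$.

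The main work is equicontinuity in $x$. Writing $\varphi_h(t):=|X_t^h(x_1)-X_t^h(x_2)|$, from \eqref{EQ27} and (\hyperlink{(H1)}{H1}) one gets
\[
\varphi_h(t)\le |x_1-x_2|+C\int_0^t\rho_{\eta_1}(\varphi_h(s))\,\d s+\int_0^t\sqrt{C\varphi_h(s)^2 g_2(\varphi_h(s))}\,|\dot h_1(s)|\,\d s.
\]
Exactly as in the proof of Theorem \ref{THM07}, the inequality $\sqrt{x^2 g_2(x)}\le C\rho_{\eta'}(x)$ valid for small $x$ (with $\eta'<\eta_1\wedge\eta_2$) lets me merge both integrands into a single $\rho_{\eta'}$-term against the weight $(1+|\dot h_1(s)|)$, yielding
\[
\varphi_h(t)\le |x_1-x_2|+C\int_0^t\rho_{\eta'}(\varphi_h(s))(1+|\dot h_1(s)|)\,\d s.
\]
Since $\int_0^T(1+|\dot h_1(s)|)\,\d s\le T+\sqrt{T}\,N$ uniformly on $\{\|h\|_{\mathbb{H}}\le N\}$, Lemma \ref{THM02}(2) gives
\[
\varphi_h(t)\le C\,|x_1-x_2|^{\exp\{-C(T+\sqrt{T}N)\}},
\]
whenever $|x_1-x_2|$ is small enough, which is a H\"older-type modulus of continuity in $x$ uniform in $h$.

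Combining the three steps, Arzel\`a-Ascoli on $[0,T]\times D_M$ gives relative compactness in $\mathbb{C}_T^M$ for every $M$, hence in $\mathbb{C}_T$. The principal technical difficulty is Step 3: handling the non-Lipschitz coefficient $\sigma_1$ together with an $L^2$ control $\dot h_1$ while still producing a deterministic modulus. The fix, borrowed from the Euler-scheme argument in Theorem \ref{THM07}, is the pointwise domination $\sqrt{x^2 g_2(x)}\le C\rho_{\eta'}(x)$ which converts the square-root into a plain concave modulus and makes Bihari applicable with an integrable weight $(1+|\dot h_1|)$.
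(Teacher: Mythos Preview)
Your proposal is correct and follows essentially the same route as the paper's proof: Arzel\`a--Ascoli on each $[0,T]\times D_M$, with the time-modulus coming from linear growth plus Cauchy--Schwarz, and the space-modulus coming from the same Bihari-type inequality (Lemma~\ref{THM02}(2)) applied to $\varphi_h(t)$ after merging the $\bar f_1$- and $\sigma_1$-contributions into a single $\rho_{\eta'}$ integrand against the weight $(1+|\dot h_1|)$. The paper obtains the uniform bound as a byproduct of the time-continuity estimate via $|X_t^h(x)|\le|X_t^h(x)-X_0^h(x)|+|x|$ rather than by a separate Gronwall step, but this is a cosmetic difference only.
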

	\begin{proof}
 
	By the Ascoli-Arzelà theorem, we only need to prove the following two facts for any \( R \in \mathbb{N} \) :\\
		(i) \( \sup _{\|h\|_{\mathbb{H}} \leq N}\|S(h)\|_{\mathbb{C}_{T}^{R}}<+\infty \).\\
		(ii) \( \left\{(t, x) \mapsto S(h)(t, x) ;(t, x) \in[0, T] \times D_{R},\|h\|_{\mathbb{H}} \leq N\right\} \) is equi-continuous.
		
		Set $ \varphi_{h}(t):=|X_{t}^{h}(x)-X_{t}^{h}(y)|. $
		By \eqref{EQ06} and \eqref{EQ67},  we have
  \begin{equation}\label{eq:08021}
	\begin{aligned}
		\varphi_{h}(t) & \leq  |x-y|+\left|\int_{0}^{t}(\sigma_1(X_{s}^{h}(x))-\sigma_1(X_{s}^{h}(y)))\dot{h}_1({s}) \d s\right|  +\left|\int_{0}^{t}\bar{f}_1(X_{s}^{h}(x))-\bar{f}_1(X_{s}^{h}(y))\d s\right| \\
		& \leq  |x-y|+C\int_{0}^{t} \sqrt{ |X_s^h(x)-X_s^h(y)|^2\cdot g_2(|X_s^h(x)-X_s^h(y)|)} \cdot |\dot{h}_1(s)|\d s\\
		& \quad +C\int_{0}^{t} \rho_{\eta_1}(|X_s^h(x)-X_s^h(y)|) \d s \\
  & \leq  |x-y|+C\int_{0}^{t} \sqrt{ |X_s^h(x)-X_s^h(y)|\cdot \rho_{\eta_2}(|X_s^h(x)-X_s^h(y)|)} \cdot |\dot{h}_1(s)|\d s\\
		& \quad +C\int_{0}^{t} \rho_{\eta_1}(|X_s^h(x)-X_s^h(y)|) \d s \\
		&  \leq |x-y|+C\int_{0}^{t} \rho_{\eta^{\prime}}(\varphi_{h}(s)) \cdot (1+|\dot{h}_1({s})|) \d s,
	\end{aligned}
  \end{equation}
		where \( 0<\eta^{\prime}<\eta_1\wedge\eta_2 \) is small enough. By Theorem \ref{THM02}-(2), we obtain that 
	\begin{equation}\label{EQ11}
			\begin{aligned}
			\sup _{0 \leq t \leq T,\|h\|_{\mathbb{H}} \leq N} \varphi_{h}(t) & \leq \sup _{0 \leq t \leq T,\|h\|_{\mathbb{H}} \leq N}|x-y|^{\exp \left\{-\int_{0}^{t} C\left(1+|\dot{h}_1({u})|\right) \d u\right\}} \\
			& \leq|x-y|^{\exp \{-C T(1+N)\}}
		\end{aligned}
	\end{equation}
		for \( |x-y| \leq \eta^{\prime} \).
		By the linear growth of  \(\bar{f}_1 \) and \( \sigma_1 \), it is clear that there is a constant \( C=C(R, T, N) \) such that for all \( t,s \in[0, T] \)
	\begin{equation}\label{EQ13}
			\sup _{x \in D_{R},\|h\|_{\mathbb{H}} \leq N}|X_{t}^{h}(x)-X_{s}^{h}(x)| \leq C|t-s|^{1 / 2} .
	\end{equation}
		Both (i) and (ii) follow from \eqref{EQ11} and \eqref{EQ13}, by noticing that
		\[ 
		|X_t^h(x)| \leq |X_t^h(x)-X_0^h(x)|+|X_0^h(x)|, 
		 \]
		 and
		 \[ 
		 |X_t^h(x)-X_s^h(y)|\leq |X_t^h(x)-X_t^h(y)|+|X_t^h(y)-X_s^h(y)|
		  \]
    respectively. The proof is complete.
	\end{proof}
	
	\begin{lemma}\label{THM12}
	The mapping \( h \mapsto S(h) \) is continuous from \( B_{N} \) to \( \mathbb{C}_{T} \).
	\end{lemma}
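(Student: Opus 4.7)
The plan is to combine the relative compactness of $\{S(h_n)\}_n$ established in Lemma \ref{THM09} with a characterization of the limit obtained by passing to the limit in the ODE \eqref{EQ06}, and then invoke the uniqueness from Theorem \ref{THM07}. Since $B_N$ equipped with the weak topology is a metrizable compact Polish space, it suffices to prove sequential continuity: if $h_n \to h$ weakly in $B_N$, then $S(h_n) \to S(h)$ in $\mathbb{C}_T$, i.e.\ $\sup_{(t,x)\in [0,T]\times D_R}|X^{h_n}_t(x) - X^h_t(x)| \to 0$ for every $R \in \mathbb{N}$.

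For the identification step, by Lemma \ref{THM09} any subsequence of $\{S(h_n)\}_n$ admits a further subsequence (still denoted $\{h_n\}$) such that $S(h_n) \to \tilde X$ uniformly on each $[0,T]\times D_R$. Fixing $(t,x)$ and passing to the limit in
\[
X^{h_n}_t(x) = x + \int_0^t \bar f_1(X^{h_n}_s(x))\,\d s + \int_0^t \sigma_1(X^{h_n}_s(x))\,\dot h_{n,1}(s)\,\d s,
\]
the drift integral converges to $\int_0^t \bar f_1(\tilde X(s,x))\,\d s$ by the modulus estimate \eqref{EQ27} for $\bar f_1$ and the uniform convergence of $X^{h_n}_\cdot(x)$ on $[0,T]$. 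For the control integral, I would use the decomposition
\[
\int_0^t \sigma_1(X^{h_n}_s(x))\,\dot h_{n,1}(s)\,\d s = \int_0^t [\sigma_1(X^{h_n}_s(x))-\sigma_1(\tilde X(s,x))]\,\dot h_{n,1}(s)\,\d s + \int_0^t \sigma_1(\tilde X(s,x))\,\dot h_{n,1}(s)\,\d s.
\]
By Cauchy--Schwarz and $\|h_n\|_{\mathbb{H}}\le N$, the first piece is dominated by $N\bigl(\int_0^t\|\sigma_1(X^{h_n}_s(x))-\sigma_1(\tilde X(s,x))\|^2\,\d s\bigr)^{1/2}$, which tends to $0$ thanks to (\hyperlink{(H1)}{H1}), the continuity of $g_2$, and the uniform convergence $X^{h_n}_s(x)\to \tilde X(s,x)$ on $[0,T]$. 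The second piece converges to $\int_0^t \sigma_1(\tilde X(s,x))\,\dot h_1(s)\,\d s$ because the deterministic integrand $s\mapsto \sigma_1(\tilde X(s,x))\1_{[0,t]}(s)$ lies in $L^2([0,T];\R^{d}\otimes \R^d)$ (linear growth of $\sigma_1$ and boundedness of $\tilde X$ on $[0,T]$) and $\dot h_{n,1}\rightharpoonup \dot h_1$ weakly in $L^2([0,T];\R^d)$, which follows from $h_n\to h$ weakly in $\mathbb{H}$ by testing against arbitrary $L^2$-integrands. Hence $\tilde X(\cdot,x)$ satisfies the same ODE \eqref{EQ06} with control $h$ and initial value $x$; by the uniqueness in Theorem \ref{THM07}, $\tilde X(t,x) = X^h_t(x)$ for all $(t,x)$. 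A standard subsequence-of-subsequence argument then upgrades this to convergence of the whole sequence $S(h_n)\to S(h)$ in $\mathbb{C}_T$.

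The main obstacle is that the non-Lipschitz modulus $g_2$ in assumption (\hyperlink{(H1)}{H1}) precludes a direct Gronwall-type comparison between $X^{h_n}$ and $X^h$, which is precisely why the compactness-plus-identification scheme is preferable to a pathwise estimate. The principal subtlety is then extracting convergence of the control integral $\int_0^t \sigma_1(X^{h_n}_s(x))\,\dot h_{n,1}(s)\,\d s$ from the mere weak convergence of $\dot h_{n,1}$ in the face of a state-dependent integrand; this is resolved by the decomposition above, in which only the frozen integrand $\sigma_1(\tilde X(s,x))$ is tested against the weakly convergent sequence, while the correction term is killed by Cauchy--Schwarz together with the uniform pathwise convergence.
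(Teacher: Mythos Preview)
Your compactness-plus-identification argument is correct: the uniform convergence of $X^{h_n}_\cdot(x)$ on $[0,T]$ together with the modulus assumption on $\sigma_1$ does kill the first piece of the control integral, and testing the frozen integrand against the weakly convergent $\dot h_{n,1}$ handles the second. Uniqueness from Theorem \ref{THM07} then pins down the limit, and the subsequence trick finishes.

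The paper, however, takes a genuinely different route: a \emph{direct} pathwise comparison between $X^{h_n}$ and $X^h$. It isolates the auxiliary function $g_n(t,x):=\int_0^t \sigma_1(X^h_s(x))(\dot h_1^n(s)-\dot h_1(s))\,\d s$, proves $\|g_n\|_{\mathbb{C}_T^R}\to 0$ by weak convergence plus equicontinuity, and then shows that $\psi_n(t):=\sup_{(s,x)\in[0,t]\times D_R}|X^{h_n}_s(x)-X^h_s(x)|$ satisfies a Bihari-type inequality
\[
\psi_n(t)\le \|g_n\|_{\mathbb{C}_T^R}+C\int_0^t \rho_{\eta'}(\psi_n(s))(1+|\dot h_1^n(s)|)\,\d s,
\]
so that Lemma \ref{THM02}-(2) yields $\psi_n(T)\le (\|g_n\|_{\mathbb{C}_T^R})^{\exp\{-CT(1+N)\}}\to 0$. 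Thus your remark that the non-Lipschitz modulus ``precludes a direct Gronwall-type comparison'' is not quite right: the Bihari machinery of Lemma \ref{THM02} is precisely what replaces Gronwall here, and the paper exploits it. The gain of the paper's approach is a quantitative rate (an explicit power of $\|g_n\|_{\mathbb{C}_T^R}$), while your soft argument is shorter, avoids the $\rho_\eta$ calculus entirely, and would transfer more readily to settings where a convenient comparison function is not at hand.
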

	\begin{proof}
	Let \( h^{n}=(h_1^n,h_2^n) \rightarrow h=(h_1,h_2) \) weakly in \( B_{N} \). Set
		\[
		g_{n}(t, x):=\int_{0}^{t}\sigma(X_{s}^{h}(x))(\dot{h}^{n}_1({s})-\dot{h}_1({s})) \d s .
		\]
		Then for every \( (t, x) \in[0, T] \times D_{R} \),
		\[
		\lim _{n \rightarrow \infty}\left|g_{n}(t, x)\right|=0 .
		\]
		By the same method as in Lemma \ref{THM09}, we know that \( \left\{g_{n}\right\}_{n \in \mathbb{N}} \) is relatively compact in \( \mathbb{C}_{T}^{R} \), and so
		\begin{equation}\label{EQ14}
				\lim _{n \rightarrow \infty}\left\|g_{n}\right\|_{\mathbb{C}_{T}^{R}}=0.
		\end{equation}
		Put
		\[
		\psi_{n}(t):=\sup _{(s, x) \in[0, t] \times D_{R}}|X_{s}^{h^{n}}(x)-X_{s}^{h}(x)| .
		\]
		By \eqref{EQ06}, we have
		\begin{align*}
			\psi_{n}(t) &  \leq \left\|g_{n}\right\|_{\mathbb{C}_{T}^{R}}+\sup _{(s, x) \in[0, t] \times D_{R}}\left|\int_{0}^{s}(\sigma_1(X_{u}^{h^{n}}(x))-\sigma_1(X_{u}^{h}(x))) \dot{h}_1^{n}({u})\d u\right| \\
			& \quad+\sup _{(s, x) \in[0, t] \times D_{R}}\left|\int_{0}^{s}(\bar{f}_1(X_{u}^{h^{n}}(x))-\bar{f}_1(X_{u}^{h}(x))) \d u\right| \\
			& \leq \left\|g_{n}\right\|_{\mathbb{C}_{T}^{R}}+\int_{0}^{t} \sqrt{| X_{s}^{h^{n}}(x)- X_{s}^{h}(x)|^2\cdot g_1(| X_{s}^{h^{n}}(x)- X_{s}^{h}(x)|)}\cdot |\dot{h}_1^{n}({s})|\d s\\
			& \quad+C\int_{0}^{t} \rho_{\eta_1}(| X_{s}^{h^{n}}(x)- X_{s}^{h}(x)|)\d s \\
			& \leq\left\|g_{n}\right\|_{\mathbb{C}_{T}^{R}}+C\int_{0}^{t} \rho_{\eta^{\prime}}(\psi_{n}(s)) (1+|\dot{h}_1^{n}(s)|) \d s,
		\end{align*}
		where $\eta^{\prime}$ is given in \eqref{eq:08021}.	By Theorem \ref{THM02}-(2), we obtain
		\[
		\begin{aligned}
			\|X^{h^{n}}-X^{h}\|_{\mathbb{C}_{T}^{R}}=\psi_{n}(T) & \leq(\left\|g_{n}\right\|_{\mathbb{C}_{T}^{R}})^{\exp \left\{-C \int_{0}^{T}\left(1+|\dot{h}_1^{n}({u})|\right) \d u\right\}} \\
			& \leq(\left\|g_{n}\right\|_{\mathbb{C}_{T}^{R}})^{\exp \{-C T(1+N)\}},
		\end{aligned}
		\]
		which, together with \eqref{EQ14}, completes the proof.
	\end{proof}
	Based on the above two lemmas, we can obtain the following result. \begin{proposition}
		(i) For any \( f \in \mathbb{C}_{T} \), if \( I(f)<\infty \), there is a \( h_{0} \in \mathbb{H} \) such that \( 2 I(f)=\left\|h_{0}\right\|_{\mathbb{H}}^{2} \).\\
		(ii) \( I(f) \) is a rate function on \( \mathbb{C}_{T} \).
\end{proposition}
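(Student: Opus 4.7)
The plan is to leverage the two preceding lemmas: weak compactness of balls in the Cameron-Martin space $\mathbb{H}$ combined with the continuity result in Lemma \ref{THM12} will handle part (i), and then Lemma \ref{THM09} together with part (i) will yield compactness of the level sets for part (ii).

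For part (i), I would fix $f \in \mathbb{C}_T$ with $I(f) < \infty$ and choose a minimizing sequence $\{h_n\} \subset \mathbb{H}$ with $S(h_n) = f$ and $\|h_n\|_{\mathbb{H}}^2 \downarrow 2I(f)$. Since $\{h_n\}$ is bounded in the Hilbert space $\mathbb{H}$, there is a subsequence $h_{n_k}$ converging weakly to some $h_0 \in \mathbb{H}$; for sufficiently large $N$ all $h_{n_k}$ and $h_0$ lie in $B_N$. By Lemma \ref{THM12}, the continuity of $S$ on $B_N$ equipped with the weak topology gives $S(h_{n_k}) \to S(h_0)$ in $\mathbb{C}_T$, and since $S(h_{n_k}) = f$ for every $k$ we conclude $S(h_0) = f$. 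The weak lower semicontinuity of the norm then yields
\[
\|h_0\|_{\mathbb{H}}^2 \leq \liminf_{k\to\infty}\|h_{n_k}\|_{\mathbb{H}}^2 = 2I(f),
\]
while the definition of $I(f)$ as an infimum gives the reverse inequality $\|h_0\|_{\mathbb{H}}^2 \geq 2I(f)$, so equality holds.

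For part (ii), I would fix $a \in [0,\infty)$ and show that the level set $K_a := \{f \in \mathbb{C}_T : I(f) \leq a\}$ is compact. By part (i), every $f \in K_a$ equals $S(h_f)$ for some $h_f \in \mathbb{H}$ with $\|h_f\|_{\mathbb{H}} \leq \sqrt{2a}$, so $K_a \subset \{S(h) : \|h\|_{\mathbb{H}} \leq \sqrt{2a}\}$, which is relatively compact in $\mathbb{C}_T$ by Lemma \ref{THM09}. It remains to check that $K_a$ is closed. Given a sequence $f_n \in K_a$ with $f_n \to f$ in $\mathbb{C}_T$, part (i) supplies $h_n \in \mathbb{H}$ with $S(h_n) = f_n$ and $\|h_n\|_{\mathbb{H}}^2 \leq 2a$. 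Passing to a weakly convergent subsequence $h_{n_k} \rightharpoonup h_0$ in $B_{\sqrt{2a}}$, Lemma \ref{THM12} gives $S(h_{n_k}) \to S(h_0)$ in $\mathbb{C}_T$. Comparing limits forces $S(h_0) = f$, and weak lower semicontinuity of the norm yields $\|h_0\|_{\mathbb{H}}^2 \leq 2a$, hence $I(f) \leq \tfrac{1}{2}\|h_0\|_{\mathbb{H}}^2 \leq a$. Thus $f \in K_a$, and $I$ is a rate function.

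No step here is really an obstacle since both the weak compactness of Hilbert balls and the two preceding lemmas do all the heavy lifting; the only point requiring a little care is making sure the weak continuity of $S$ from Lemma \ref{THM12} is applied on a ball $B_N$ large enough to contain the whole approximating sequence, which is automatic once we use the uniform bound $\|h_n\|_{\mathbb{H}}^2 \leq 2a$ (or $\|h_n\|_{\mathbb{H}}^2 \to 2I(f)$).
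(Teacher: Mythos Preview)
Your proposal is correct and follows essentially the same approach as the paper: both parts use a minimizing (or bounded) sequence in $\mathbb{H}$, extract a weakly convergent subsequence in a ball $B_N$, apply Lemma \ref{THM12} to identify the limit via $S$, and invoke weak lower semicontinuity of the norm; for (ii), both use Lemma \ref{THM09} for relative compactness and then verify closedness by the same sequential argument.
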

	\begin{proof}
		\begin{enumerate}[(i)]
			\item By the definition of \( I(f) \), there is a sequence \( \{h_{n} \}\subset \mathbb{H} \) such that \( \left\|h_{n}\right\|_{\mathbb{H}}^{2} \downarrow \) \( 2 I(f) \) and \( S\left(h_{n}\right)=f \). Put \( N:=\sup _{n}\left\|h_{n}\right\|_{\mathbb{H}} \). Note that  $ B_N $ is  relatively compact, then there is a subsequence \( \{h_{n_{k}} \}\) and \( h_{0} \) such that \( h_{n_{k}} \rightarrow h_{0} \) in \( B_{N} \). So \( \|h_{0}\|_{\mathbb{H}}^{2} \leq \liminf _{k \rightarrow \infty}\left\|h_{n_{k}}\right\|_{\mathbb{H}}^{2}=2 I(f) \). By Lemma \ref{THM12},  we have \( S\left(h_{0}\right)=f \), hence \( 2 I(f)=\left\|h_{0}\right\|_{\mathbb{H}}^{2} \).
			
			\item For any \( a<\infty \), obviously \( A:=\{f: I(f) \leq a\} \subset\left\{S(h) ;\|h\|_{\mathbb{H}}^{2} \leq 2 a\right\} \). By Lemma \ref{THM09}, notice that the closed subset of relatively compact set is compact, so we only need to prove that \( A \) is closed in \( \mathbb{C}_{T} \). Let \( A \ni f_{n} \rightarrow f \) in \( \mathbb{C}_{T} \). By (i) we can choose \( h_{n} \in B_{2 a} \) such that \( S\left(h_{n}\right)=f_{n} \). By the compactness of \( B_{2 a} \), there is a subsequence \( \{h_{n_{k}} \}\) and \( h \in B_{2 a} \) such that \( h_{n_{k}} {\rightarrow} h \) in \( B_{2 a} \). By Lemma \ref{THM12} we have \( f_{n_{k}}=S(h_{n_{k}}) \rightarrow S(h) \), hence \( f=S(h) \) and \( A \) is closed.
		\end{enumerate}
\end{proof}

	\subsection{Weak convergence}
	Our core goal is proving the weak convergence of controlled slow processes. To proceed,
we first give analysis of tightness which is crucial in the proof of the Laplace principle. Here, we list some lemmas for the modification of random fields with multiple parameters cited from Kunita \cite{Kunita1997,Kunita2019}. The first is the continuous modification criterion for the random fields.
	\begin{lemma}[Kolmogorov-Totoki's criterion]\label{THM11}
	Let \( \{X(x), x \in \mathbb{D}\} \) be a random field with values in a separable Banach space \(S\) with norm $ \|\cdot\|$, where \( \mathbb{D} \) is a bounded domain in \(\mathbb{R}^{d}\). Assume that there exist positive constants \(\gamma,C\) and \(\alpha>d\) satisfying
	\[
	\E[\|X(x)-X(y)\|^{\gamma}] \leq C|x-y|^{\alpha}, \quad \forall x, y \in \mathbb{D} .
	\]
	Then there exists a continuous random field \( \{\tilde{X}(x), x \in \bar{\mathbb{D}}\} \) such that \( \tilde{X}(x)=X(x) \) holds a.s., for any \( x \in \mathbb{D} \), where \( \bar{\mathbb{D}} \) is the closure of \( ~\mathbb{D} \). Furthermore, if \(~ 0 \in \mathbb{D} \) and \( \E\|X(0)\|^{\gamma}<\infty \), we have \( \E[\sup _{x \in \mathbb{D}}\|\tilde{X}(x)\|^{\gamma}]<\infty \).
	\end{lemma}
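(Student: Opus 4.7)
The plan is to invoke the classical dyadic-approximation technique underpinning the Kolmogorov--Chentsov continuity theorem, adapted to random fields taking values in a separable Banach space. Since $\mathbb{D}$ is bounded, fix a cube containing it and set $D_n:=\mathbb{D}\cap 2^{-n}\mathbb{Z}^d$; call $x,y\in D_n$ \emph{nearest neighbors} if $|x-y|_{\infty}=2^{-n}$. The number of neighbor pairs in $D_n$ is bounded by $C\cdot 2^{nd}$, where $C$ depends only on $d$ and the diameter of $\mathbb{D}$.

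Step one: apply Chebyshev's inequality together with the hypothesis to each neighbor pair to obtain
\[
\P(\|X(x)-X(y)\|>2^{-\beta n})\leq C\cdot 2^{\gamma\beta n}|x-y|^{\alpha}\leq C\cdot 2^{-(\alpha-\gamma\beta)n},
\]
and a union bound yields $\P(A_n)\leq C\cdot 2^{-(\alpha-d-\gamma\beta)n}$, where $A_n$ is the event that some neighbor difference in $D_n$ exceeds $2^{-\beta n}$. Because $\alpha>d$, any $\beta\in(0,(\alpha-d)/\gamma)$ makes $\sum_n\P(A_n)<\infty$, and the Borel--Cantelli lemma supplies a full-measure event $\Omega_0$ on which $A_n$ fails for all large $n$.

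Step two: on $\Omega_0$, for any pair of dyadic points $x,y\in\bigcup_n D_n$ with $2^{-(n+1)}<|x-y|_{\infty}\leq 2^{-n}$, telescope along a chain of neighbor jumps at refining scales $k\geq n$; the total increment is dominated by the geometric tail $\sum_{k\geq n}C\cdot 2^{-\beta k}\leq C|x-y|_{\infty}^{\beta}$. This yields uniform $\beta$-H\"older continuity of $X$ on the countable dense set $\bigcup_n D_n$, and completeness of $S$ delivers the unique continuous extension $\tilde X$ to $\bar{\mathbb{D}}$. For each fixed $x\in\mathbb{D}$, the identification $\tilde X(x)=X(x)$ a.s.\ follows because $X(x_k)\to X(x)$ in probability along any dyadic sequence $x_k\to x$, which is immediate from the moment hypothesis.

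Step three (moment bound): write $M_n:=\max_{\text{nbrs in }D_n}\|X(x)-X(y)\|$. The chaining estimate gives a H\"older seminorm $H$ of $\tilde X$ satisfying $H\leq C\sum_n 2^{\beta n}M_n$, and since $\E[M_n^{\gamma}]\leq C\cdot 2^{nd}\cdot 2^{-\alpha n}$, Minkowski's inequality in $L^{\gamma}$ yields $\|H\|_{L^{\gamma}}\leq C\sum_n 2^{\beta n}\cdot 2^{-(\alpha-d)n/\gamma}<\infty$ for $\beta<(\alpha-d)/\gamma$. Combining this with $\E\|X(0)\|^{\gamma}<\infty$ and the trivial bound $\sup_x\|\tilde X(x)\|\leq\|X(0)\|+\mathrm{diam}(\mathbb{D})^{\beta}\cdot H$ produces the desired moment finiteness. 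The main technical obstacle is managing constants uniformly near $\partial\mathbb{D}$: since $\mathbb{D}$ is only assumed to be a bounded domain, not a dyadic cube, one must slightly enlarge the dyadic grid so that every neighbor chain remains within a fixed enclosing cube, and then pass to $\bar{\mathbb{D}}$ by density. A secondary subtlety is that the H\"older constant itself has finite $\gamma$-th moment --- this needs the $L^{\gamma}$ chaining above rather than only the qualitative Borel--Cantelli extraction used to produce $\Omega_0$.
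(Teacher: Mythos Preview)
Your proposal is a correct, standard dyadic-chaining proof of the Kolmogorov--Chentsov--Totoki criterion; the paper, however, does not prove this lemma at all but simply quotes it from Kunita's books \cite{Kunita1997,Kunita2019}, so there is no in-paper argument to compare against. One small caveat: your Step~3 invokes Minkowski's inequality in $L^{\gamma}$, which presupposes $\gamma\geq 1$; if $\gamma<1$ you should instead use the subadditivity $(\sum a_k)^\gamma\leq\sum a_k^\gamma$, which still gives $\E[H^\gamma]\leq C\sum_n 2^{\gamma\beta n}\E[M_n^\gamma]<\infty$ under the same condition $\beta<(\alpha-d)/\gamma$.
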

	
	The next lemma is the tightness criterion for a sequence of continuous stochastic fields.
	\begin{lemma}[Kolmogorov's  tightness criterion]\label{THM13}
		Let $ \{X_n(x):x\in \mathbb{I}\} $ be a sequence of continuous random fields with value in a real sperable semi-reflexive Frechet space $ S $ with seminorms $ \|\cdot\|_N,N\geq 1.$ Assume that for each $ N $ there exsits positive constants $ \gamma,C $ and $ \alpha_1,\cdots,\alpha_d $ with  $ \sum_{i=1}^{d}\alpha_i^{-1} <1 $ such that 
		\[ 
		\E[\|X_n(x)-X_n(y)\|_N^\gamma]\leq C\sum_{i=1}^{d}|x_i-y_i|^{\alpha_i},\quad \forall x,y \in \mathbb{I},
		\]   
		and 
		\[ 
		\E\|X_n(x)\|_N^{\gamma} \leq C,\quad \forall x\in \mathbb{I},
		\]
		holds for any $ n.$  Then $ \{X_n\} $  is tight with respect to the semi-weak topology of $C(\mathbb{I};S), $ the set consisted by all continuous maps from \( \mathbb{I} \) to \( S \).
	\end{lemma}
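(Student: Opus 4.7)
The plan is to deduce tightness of $\{X_n\}$ in $C(\mathbb{I};S)$ equipped with the semi-weak topology from two ingredients: (a) pointwise tightness of $\{X_n(x_0)\}$ in $S$ at some fixed $x_0\in\mathbb{I}$, and (b) uniform-in-$n$ equicontinuity in probability of the sample fields. This is the standard probabilistic Ascoli--Arzel\`a blueprint, and it is well suited to the semi-reflexive Fr\'echet setting because in such spaces norm-bounded sets are automatically relatively compact in the semi-weak topology.

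For (a), Markov's inequality applied to the hypothesis $\E\|X_n(x_0)\|_N^{\gamma}\leq C$ shows that for every $N\geq 1$ the family $\{X_n(x_0)\}$ is bounded in probability with respect to the seminorm $\|\cdot\|_N$. Intersecting over the countable family of seminorms that generates the topology of $S$, and invoking semi-reflexivity, I obtain that $\{X_n(x_0)\}$ is tight in $S$ under the semi-weak topology.

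For (b), the main work, I would run a multi-parameter Kolmogorov--\v{C}entsov / Garsia--Rodemich--Rumsey chaining argument seminorm by seminorm. Fix $N$ and, for each integer $k\geq 1$, use the anisotropic dyadic grid $\mathbb{I}_k$ whose mesh in the $i$-th coordinate is $2^{-k/\alpha_i}$. The number of grid points is of order $\prod_{i=1}^d 2^{k/\alpha_i}=2^{k\sum_i 1/\alpha_i}$, while the hypothesis gives each nearest-neighbor increment a $\gamma$-th moment of order $C\sum_i 2^{-k}$. Combining Markov's inequality with a summable threshold and Borel--Cantelli then produces an a.s.\ finite random H\"older-type constant $K_n$ such that
\[
\|X_n(x)-X_n(y)\|_N \leq K_n\,\omega(|x-y|),\qquad x,y\in\mathbb{I},
\]
for a common modulus $\omega$; here the hypothesis $\sum_{i=1}^d \alpha_i^{-1}<1$ is precisely what makes the resulting geometric series summable (with some margin available for the threshold). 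Controlling the $\gamma$-th moment of $K_n$ along the chaining then yields $\sup_n \E K_n^{\gamma}<\infty$, which upgrades to uniform tightness of the modulus of continuity:
\[
\lim_{\delta\downarrow 0}\sup_n \P\Bigl(\sup_{|x-y|\leq\delta}\|X_n(x)-X_n(y)\|_N>\epsilon\Bigr)=0,\qquad \epsilon>0.
\]

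The main obstacle is the anisotropic bookkeeping in the chaining: grids in different coordinate directions are refined at different rates $2^{-k/\alpha_i}$, so one has to telescope an arbitrary pair $x,y\in\mathbb{I}$ along a path that crosses grid scales unevenly, and check that the partial increments still telescope under the summability afforded by $\sum_i \alpha_i^{-1}<1$. Everything else is cosmetic: one carries the chaining out separately for each seminorm $\|\cdot\|_N$ and then intersects the resulting good events along a countable exhaustion. With (a) and (b) in hand, the Ascoli--Arzel\`a characterization of precompact subsets of $C(\mathbb{I};S)$ under the semi-weak topology, together with Prohorov's theorem, delivers tightness of $\{X_n\}$.
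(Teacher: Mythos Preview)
The paper does not prove this lemma at all: it is stated as a quoted result from Kunita's monographs \cite{Kunita1997,Kunita2019} (see the sentence preceding the lemma: ``The next lemma is the tightness criterion for a sequence of continuous stochastic fields''), and is then invoked as a black box in the proof of Theorem~\ref{THM10}(ii). So there is nothing in the paper to compare your argument against.

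That said, your sketch is the standard route to such a result and is essentially correct in outline. The two-step strategy---pointwise tightness in $S$ via the uniform moment bound plus semi-reflexivity, then uniform-in-$n$ control of the modulus of continuity via an anisotropic Kolmogorov--\v{C}entsov chaining with dyadic meshes $2^{-k/\alpha_i}$ in the $i$-th coordinate---is exactly how Kunita's proof proceeds, and you correctly identify $\sum_i\alpha_i^{-1}<1$ as the summability condition that makes the chaining converge. If you were to write this out in full, the only points requiring care are (i) making precise what ``semi-weak topology'' on $C(\mathbb{I};S)$ means and checking that the Ascoli--Arzel\`a / Prohorov machinery applies there, and (ii) the anisotropic telescoping you flag as the main obstacle; both are handled in Kunita's text.
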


	Combining the prior estimation of $ \hat{X}_t^{(\varepsilon,\delta)} $ with above Kolmogorov-Totoki's criterion, we have the following result.
	\begin{lemma}\label{THM16}
		Under the assumption (\hyperlink{(H1)}{H1})-(\hyperlink{(H3)}{H3}), the slow variable of generalized system \eqref{EQ12} has a bi-continuous modification which belongs to $ \C_T, $ and still is denoted by $ \hat{X}_t^{(\varepsilon,\delta)}(x). $ 
		In other words, if  we fix the initial data of fast variable $ \hat{Y}^{(\varepsilon,\delta)} $ to be $ y_0, $  then the mapping
		\[ 
		(t,x) \mapsto \hat{X}_t^{(\varepsilon,\delta)}(x) \in \C_T\quad \P\text{-a.s.}. 
		 \]
	\end{lemma}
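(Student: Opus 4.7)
The plan is to produce the jointly continuous modification by applying the Kolmogorov--Totoki criterion (Lemma \ref{THM11}) on each closed slab $[0,T]\times D_N$ and then intersecting the resulting full-measure events over $N\in\N$. The spatial modulus is furnished by the truncated moment bound of Proposition \ref{THM18}; the temporal modulus is furnished by (a higher-moment upgrade of) Lemma \ref{THM17}; the two are glued together using the subadditivity $f(u+v)\leq f(u)+f(v)$ of the truncation $f$ introduced in Proposition \ref{THM18}.

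First I would upgrade Lemma \ref{THM17} to $\E|\hat{X}_t^{(\varepsilon,\delta)}-\hat{X}_s^{(\varepsilon,\delta)}|^{q}\leq C|t-s|^{q/2}$ for every even $q\geq 2$, which is a routine repetition of its proof using BDG's inequality with exponent $q$ and H\"older's inequality on the drift and controlled terms, together with $\int_0^T|\dot{h}_1^{(\varepsilon,\delta)}(s)|^2\d s\leq N^2$. Inserting $\hat{X}_t^{(\varepsilon,\delta)}(y)$ as an intermediate point, using $f(a+b)\leq f(a)+f(b)$, the pointwise inequality $f(u)^{p}\leq u^{p}$ on the temporal piece, and Proposition \ref{THM18} on the spatial piece with the exponent $p$ chosen there, I arrive at
\[
\E\big[f(|\hat{X}_t^{(\varepsilon,\delta)}(x)-\hat{X}_s^{(\varepsilon,\delta)}(y)|)^{p}\big]\leq C\big(|x-y|^{2(d+1)}+|t-s|^{p/2}\big),\quad (t,x),(s,y)\in[0,T]\times D_N.
\]
Taking $p$ large enough so that $p/2>d+1$ (either directly from the choice in Proposition \ref{THM18} or after enlarging $p$), the right-hand side is dominated by $C|(t,x)-(s,y)|^{\alpha}$ for some $\alpha>d+1=\dim([0,T]\times D_N)$.

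The main obstacle is that Lemma \ref{THM11} demands a moment bound on $\|X(\cdot)-X(\cdot)\|$ itself, whereas what is controlled here is the truncated quantity $f(|\cdot|)^{p}$. This is overcome by exploiting $f(u)=u$ on $[0,1/4)$: for $\eta\in(0,1/4)$ the preceding estimate translates into the tail bound
\[
\P\big(|\hat{X}_t^{(\varepsilon,\delta)}(x)-\hat{X}_s^{(\varepsilon,\delta)}(y)|>\eta\big)\leq C\,\eta^{-p}\,|(t,x)-(s,y)|^{\alpha},
\]
which is exactly the ingredient fueling the Borel--Cantelli/dyadic argument that underlies Kolmogorov--Totoki. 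This yields a jointly continuous modification on $[0,T]\times D_N$ almost surely for every $N$; a countable intersection over $N\in\N$, together with the observation that any compact subset of $[0,T]\times\R^{d}$ is contained in some $[0,T]\times D_N$, produces the desired $\mathbb{C}_T$-valued modification of $(t,x)\mapsto \hat{X}_t^{(\varepsilon,\delta)}(x)$, completing the proof.
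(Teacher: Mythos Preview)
Your approach matches the paper's: combine the spatial bound of Proposition \ref{THM18} with the temporal bound of Lemma \ref{THM17} via the subadditivity of $f$, then invoke Kolmogorov--Totoki on each $[0,T]\times D_R$ with respect to the complete metric $\operatorname{dist}(x,y)=f(|x-y|)$. You are in fact more explicit than the paper on two points it glosses over---the need for a higher-moment upgrade of Lemma \ref{THM17} (already hinted at by the ``$p\geq 2$'' in its statement and the stray $|f_1|^p$ in its proof) and the norm-versus-metric issue in applying Lemma \ref{THM11}, which you correctly resolve through the tail-bound reading of Kolmogorov's argument.
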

	\begin{proof}
	  Let \( f \) be a function as in Proposition \ref{THM18}. Since
		\[
		f(x+y) \leq f(x)+f(y), \quad \forall x, y \geq 0,
		\]
	we can define a new complete metric on \( \mathbb{R}^{d} \) by means of 
		\[
		\operatorname{dist}(x, y):=f(|x-y|).
		\]
		Then by Lemma \ref{THM17} and Proposition \ref{THM18}, there exist two positive constants \( p^{\prime}\) and \( C^{\prime} \) such that
	\begin{equation}\label{EQ51}
\mathbb{E}\left(\operatorname{dist}\big(\hat{X}_{t}^{(\varepsilon,\delta)}(x_1), \hat{X}_{s}^{(\varepsilon,\delta)}(x_2)\big)\right)^{p^{\prime}} \leq C^{\prime }(|t-s|+|x_1-x_2|)^{2(d+1)}
	\end{equation}
		for all \( \varepsilon \in\left[0, \varepsilon_{0}\right], s, t \in[0, T] \) and \( x_1, x_2 \in D_{R}\). By Lemma \eqref{THM11}, we know that \( \hat{X}^{(\varepsilon, \delta)} \in \mathbb{C}_{T}^{R}~\)  for all \( R \in \mathbb{N} \), and therefore belongs to \( \mathbb{C}_{T} \).
	\end{proof}
Based on the above lemma, the following tightness results can be obtained.

\begin{theorem}\label{THM10}
	(i) For every \( \varepsilon \in\left[0, \varepsilon_{0}\right]\) and \(y_0\in \R^d\),   fixing the initial data of the fast variable $ \hat{Y}^{(\varepsilon,\delta)} $ to be $ y_0, $  \( \hat{X}_{\cdot}^{(\varepsilon,\delta)}(\cdot)\in \mathbb{C}_{T} \).
	
	(ii) The laws of \( \{(h^{(\varepsilon,\delta)}, \hat{X}^{(\varepsilon,\delta)}, W), \varepsilon \in\left[0, \varepsilon_{0}\right]\} \)  in \( B_{N} \times \mathbb{C}_{T} \times C([0,T];\R^{2d}) \) is tight.
	
	(iii) There exists a probability space \( (\bar{\Omega}, \bar{\mathscr{F}}, \bar{\P}) \) and a sequence (still indexed by $ \varepsilon $
	for simplicity) \( \{(\bar{h}^{(\varepsilon,\delta)}, \bar{X}^{(\varepsilon,\delta)}, \overline{W}^{(\varepsilon,\delta)})\} \) and \( (\bar{h}, \bar{X}^{\bar{h}}, \overline{W}) \) defined on this probability space which taking values in \( B_{N} \times \mathbb{C}_{T} \times \mathcal{W}_{T} \) such that
	
	(a) \( (\bar{h}^{(\varepsilon,\delta)}, \bar{X}^{(\varepsilon,\delta)}, \overline{W}^{(\varepsilon,\delta)})\) has the same law as  \( \{(h^{(\varepsilon,\delta)}, \hat{X}^{(\varepsilon,\delta)}, W)\} \)   for each \( \varepsilon \).
	
	(b) \( (\bar{h}^{(\varepsilon,\delta)}, \bar{X}^{(\varepsilon,\delta)}, \overline{W}^{(\varepsilon,\delta)}) \stackrel{a.s.}{\longrightarrow}(\bar{h}, \bar{X}^{\bar{h}}, \overline{W})  \) in \( B_{N} \times \mathbb{C}_{T} \times \mathcal{W}_{T} \), \( \varepsilon \rightarrow 0 \).
	
	(c) \( (\bar{h}, \bar{X}^{\bar{h}}) \) solves the following ODE:
	\begin{equation}\label{EQ18}
		\bar{X}_{t}^{\bar{h}}(x)=\int_{0}^{t}\bar{f}_{1}({\bar{X}}_{s}^{\bar{h}}(x)) \d s+\int_{0}^{t}\sigma_{1}({\bar{X}}_{s}^{\bar{h}}(x)) \dot{\bar{h}}_1(s) \d s.
	\end{equation}
	for any \( t \in[0, T] \) and \( x \in \mathbb{R}^{d} \), where $\bar{f}_{1}(x)  $ is the averaging coefficient of  $ f_1(x,y) $, that is $ \bar{f}_{1}(x)=\int_{\mathbb{R}^{{d}}} f_{1}(x, {y}) \mu_{{x}}(\d {y}).  $
\end{theorem}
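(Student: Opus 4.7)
The plan is to split the proof into three parts as stated. Part (i) is immediate: Lemma \ref{THM16} has already delivered a bi-continuous modification of $(t,x) \mapsto \hat X_t^{(\varepsilon,\delta)}(x)$ lying in $\mathbb{C}_T$, $\P$-a.s., once the initial condition of $\hat Y^{(\varepsilon,\delta)}$ is fixed. For (ii), I would observe that the tightness of $\{h^{(\varepsilon,\delta)}\}$ in $B_N$ is automatic since $B_N$ is compact Polish under the weak topology on $\mathbb{H}$, and the marginal laws of $W$ do not depend on $\varepsilon$; so only the tightness of $\{\hat X^{(\varepsilon,\delta)}\}$ in $\mathbb{C}_T$ requires real work. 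Part (iii) will then be obtained by invoking Skorokhod's representation theorem on any weakly convergent subsequence, followed by a passage to the limit in the controlled integral equation using the averaged auxiliary system from Section \ref{SEC03}.

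For the tightness of $\{\hat X^{(\varepsilon,\delta)}\}$ in (ii), my plan is to apply Kolmogorov's tightness criterion (Lemma \ref{THM13}) on each slab $[0,T]\times D_R$. The required two-parameter moment bound is furnished by estimate \eqref{EQ51}, which was obtained from Lemma \ref{THM17} (time increments) combined with Proposition \ref{THM18} (spatial increments) and phrased in terms of the bounded metric $\mathrm{dist}(x,y) = f(|x-y|)$; this bounded metric is essential because it bypasses the lack of global polynomial moments in $|\hat X_t^{(\varepsilon,\delta)}(x_1) - \hat X_t^{(\varepsilon,\delta)}(x_2)|$. Lemma \ref{THM13} with $\alpha_1=\alpha_2=2(d+1)$ and $\gamma=p'$ then yields tightness in each $\mathbb{C}_T^R$, and the Fréchet structure of $\mathbb{C}_T$ lifts this to tightness in $\mathbb{C}_T$ by a diagonal argument.

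Parts (iii)(a)--(b) then follow from Skorokhod's representation applied to the joint law on $B_N\times\mathbb{C}_T\times C([0,T];\R^{2d})$ along any weakly convergent subsequence. The main obstacle, and the core step, is (iii)(c): identifying the almost sure limit $\bar X^{\bar h}$ with the skeleton solution $S(\bar h)$ of the averaged ODE \eqref{EQ18}. My strategy is to bridge the controlled slow component and the skeleton via the auxiliary pair $(\tilde X^{(\varepsilon,\delta)}, \tilde Y^{(\varepsilon,\delta)})$ from \eqref{EQ35}: combining Lemmas \ref{LEM:0729:01}, \ref{LEM:0730:01} and \ref{LEM:0730:02} with a careful choice $\Delta = \Delta(\varepsilon,\delta)\downarrow 0$ and $\delta/\varepsilon\to 0$ produces
\[
\lim_{\varepsilon\to 0}\sup_{t\in[0,T]}\E\bigl[|\hat X_t^{(\varepsilon,\delta)} - X_t^{h^{(\varepsilon,\delta)}}(x)|^2\bigr] = 0.
\]
Transferring this $L^2$-bound to the Skorokhod copies (which have the same joint law) and using the continuity of the solution map $h\mapsto S(h)$ from Lemma \ref{THM12}, together with the weak convergence $\bar h^{(\varepsilon,\delta)}\to \bar h$ in $B_N$, I would deduce $X^{\bar h^{(\varepsilon,\delta)}}\to X^{\bar h}$ in $\mathbb{C}_T$; comparing with the almost sure convergence $\bar X^{(\varepsilon,\delta)}\to \bar X^{\bar h}$ from (b) then forces $\bar X^{\bar h}=S(\bar h)$, which is precisely \eqref{EQ18}. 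The hardest technical point will be transporting the $L^2$-estimate---established under the original measure via the non-Lipschitz inequalities involving $\rho_{\eta_1}$---through Skorokhod's construction in a manner compatible with pointwise spatial evaluation $x\in\R^d$, so that the identification takes place in $\mathbb{C}_T$ rather than merely in law at each fixed $(t,x)$.
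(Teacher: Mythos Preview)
Your proposal is correct and follows essentially the same route as the paper: part (i) from Lemma~\ref{THM16}, part (ii) from the estimate \eqref{EQ51} combined with Lemma~\ref{THM13} and the compactness of $B_N$, and part (iii)(c) by sandwiching $\bar X^{(\varepsilon,\delta)}$ between the skeleton $X^{\bar h^{(\varepsilon,\delta)}}$ and its limit $X^{\bar h}=S(\bar h)$ via Lemmas~\ref{LEM:0730:01}--\ref{LEM:0730:02} and the continuity in Lemma~\ref{THM12}. The only procedural difference is that the paper, rather than transferring the $L^2$ estimate to the Skorokhod copies by equality in law, first uses Girsanov and the measurable solution map $\Phi^{(\varepsilon,\delta)}$ to show that $\bar X^{(\varepsilon,\delta)}$ itself solves the controlled system \eqref{EQ47} driven by $\overline W^{(\varepsilon,\delta)}$ on the new space, and then re-runs the averaging estimates there; your law-transfer argument is equally valid and slightly shorter.
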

\begin{proof}
	(i) The result follows from Lemma \ref{THM16}.
	%	\[
	%	f(x+y) \leq f(x)+f(y), \quad \forall x, y \geq 0,
	%	\]
	%	
	%	我们可以在 \( \mathbb{R}^{d} \)上定义一个新的度量, 
	%	\[
	%	\operatorname{dist}(x, y):=f(|x-y|)
	%	\]
	%	由引理 \ref{THM04},\ref{THM05} , 存在三个正常数 \( p^{\prime \prime}, \beta^{\prime \prime} \) 以及 \( C^{\prime \prime} \) 使得
	%	\begin{equation}\label{EQ15}
		%		\mathbb{E}\left(\operatorname{dist}\left(X_{t}^{\varepsilon, h^{\varepsilon}}(x), X_{s}^{\varepsilon, h^{\varepsilon}}(y)\right)\right)^{p^{\prime \prime}} \leq C^{\prime \prime}(|s-t|+|x-y|)^{d+1+\beta^{\prime \prime}}
		%	\end{equation}
	%	对任意的 \( \varepsilon \in\left[0, \varepsilon_{0}\right], s, t \in[0, T] \) 和 \( x, y \in D_{R},|x-y|<\delta_{0} \). 由Kolmogorov 连续性修正判准, 我们知道 \( X^{\varepsilon, h^{\varepsilon}} \in \mathbb{C}_{T}^{R} \) 对于任意的\( R \in \mathbb{N} \), 于是也在 \( \mathbb{C}_{T} \)里面.
	%	
	
	(ii) The tightness of the laws of  \( \{(h^{(\varepsilon,\delta)}, \hat{X}^{(\varepsilon,\delta)}, W)\} \) in  \( B_{N} \times \mathbb{C}_{T} \times C([0,T];\R^{2d})\), follows from \eqref{EQ51}, Lemma \ref{THM13},  and  \( \sup \|h^{(\varepsilon,\delta)}\|_{\mathbb{H}} \leq N \), $ \P $-a.s..
	
	(iii)  By (ii), Prokhorov's theorem and Skorohod's representation theorem, we deduce that there exist a probability space \( (\bar{\Omega}, \bar{\mathscr{F}}, \bar{\mathbb{P}}) \), and   random variables \( (\bar{h}^{(\varepsilon,\delta)}, \bar{X}^{(\varepsilon,\delta)}, \overline{W}^{(\varepsilon,\delta)}) \),    \((h, X^{h}, \overline{W}) \)  on this space such that assertions (a), (b) hold. Note that we have 
 \[
 \bar{h}^{(\varepsilon,\delta)}=(\bar{h}_1^{(\varepsilon,\delta)},\bar{h}_2^{(\varepsilon,\delta)}),\overline{W}^{(\varepsilon,\delta)}=(\overline{W}^{1,(\varepsilon,\delta)},\overline{W}^{2,(\varepsilon,\delta)}).
 \]
 The following calculation will be treated on the space \( (\bar{\Omega}, \bar{\mathscr{F}}, \bar{\mathbb{P}}) \).
 For (c), noticing that  \( \overline{W}^{(\varepsilon,\delta)} \) is a Brownian motion on \( (\bar{\Omega}, \bar{\mathscr{F}}, \bar{\P}) \)  and has the same law as \( W \) for each \( \varepsilon \).  Let	$ X^{(\varepsilon,\delta)} $ be  the slow component of \eqref{EQ:SDE:01}.
 %the following equation
%	\begin{equation}\label{EQ48}
%		\left\{\begin{aligned}
%			\d X_{t}^{(\varepsilon, \delta)}& =f_{1}(X_{t}^{(\varepsilon, \delta)}, Y_{t}^{(\varepsilon, \delta)}) \d t+\sqrt{\varepsilon} \sigma_{1}(X_{t}^{(\varepsilon, \delta)}) \d W_{t},\\
%			\delta \d Y_{t}^{(\varepsilon, \delta)}& =f_{2}(X_{t}^{(\varepsilon, \delta)}, Y_{t}^{(\varepsilon, \delta)}) \d t+\sqrt{\delta} \sigma_{2}(X_{t}^{(\varepsilon, \delta)}, Y_{t}^{(\varepsilon, \delta)}) \d W_{t}.
%		\end{aligned}\right.
%	\end{equation}
	There is a $ \mathscr{F}\backslash\mathcal{B}(\C_T) $-measurable function 
	\[ 
	\Phi^{(\varepsilon,\delta)} :\Omega\to \C_T
	\]
	such that
	\begin{equation}\label{EQ49}
X^{(\varepsilon,\delta)}= \Phi^{(\varepsilon,\delta)}(W)(t,x).
	\end{equation}
	Hence,
	\[	\hat{X}^{(\varepsilon,\delta)}=\Phi^{(\varepsilon,\delta)}(W+\frac{1}{\sqrt{\varepsilon}}h^{(\varepsilon,\delta)}) \quad \P\text {-a.s.},
	\]
	which, together with (a), implies
	\[	\bar{X}^{(\varepsilon,\delta)}=\Phi^{(\varepsilon,\delta)}(\overline{W}^{(\varepsilon,\delta)}+\frac{1}{\sqrt{\varepsilon}}\bar{h}^{(\varepsilon,\delta)}), \quad \bar{\P}\text {-a.s..}
	\]
	Now by virtue of the uniqueness of strong solution and the Girsanov theorem, \( \bar{X}^{(\varepsilon,\delta)}\) is the slow component of solution  \( (\bar{X}^{(\varepsilon,\delta)},\bar{Y}^{(\varepsilon,\delta)})\) to the following equation:
	\begin{equation}\label{EQ47}
		\left\{\begin{aligned}
			\d \bar{X}^{(\varepsilon,\delta)}& =  f_{1}(\bar{X}^{(\varepsilon,\delta)}, \bar{Y}^{(\varepsilon,\delta)}) \d t+\sigma_{1}(\bar{X}^{(\varepsilon,\delta)}) \dot{\bar{h}}_1^{(\varepsilon, \delta)} \d t+\sqrt{\varepsilon} \sigma_{1}(\bar{X}^{(\varepsilon,\delta)}) \d \overline{W}_{t}^{1,(\varepsilon,\delta)} ,\\
			\delta \d \bar{Y}^{(\varepsilon,\delta)}& =  f_{2}(\bar{X}^{(\varepsilon,\delta)}, \bar{Y}^{(\varepsilon,\delta)}) \d t+\sqrt{\frac{\delta}{\varepsilon}} \sigma_{2}(\bar{X}^{(\varepsilon,\delta)}, \bar{Y}^{(\varepsilon,\delta)}) \dot{\bar{h}}_2^{(\varepsilon, \delta)} \d t+\sqrt{\delta} \sigma_{2}(\bar{X}^{(\varepsilon,\delta)}, \bar{Y}^{(\varepsilon,\delta)}) \d \overline{W}_{t}^{2,(\varepsilon,\delta)}.
		\end{aligned}\right.
	\end{equation}
 %Note that $\bar{h}^{(\varepsilon,\delta)}\stackrel{\bar{\P}\text{-a.s.}}{\longrightarrow}h,\varepsilon\to 0 ,$  we can deduce that the assertion (c) hold. To be concrete, 
 Note that $\bar{h}^{(\varepsilon,\delta)}\stackrel{\bar{\P}\text{-a.s.}}{\longrightarrow}\bar{h},\varepsilon\to 0 $. Let $X^{\bar{h}}$ be the corresponding solution to \eqref{EQ06}. Thus,  we have 
 \begin{equation}\label{EQ:SKETLON:100}
\begin{split}		
|X_t^{\bar{h}^{(\varepsilon,\delta)}}-X_t^{\bar{h}}|  & \leq \left|\int_{0}^{t}\bar{f}_1(X_s^{\bar{h}^{(\varepsilon,\delta)}})-\bar{f}_1(X_s^{\bar{h}})\d s\right|+ \left|\int_{0}^{t}(\sigma_1(X_s^{\bar{h}^{(\varepsilon,\delta)}})-\sigma_1(X_s^{\bar{h}}))\dot{\bar{h}}_1^{(\varepsilon,\delta)}(s)\d s\right|\\
		&\quad  + \left|\int_{0}^{t}\sigma_1(X_s^h)(\dot{\bar{h}}_1^{(\varepsilon,\delta)}(s)-\dot{{\bar{h}}}_1(s))\d s\right| \\
   &=: G_1+G_2+G_3.
\end{split}
\end{equation}
By \eqref{EQ27}, we have
\begin{equation}\label{EQ:SKETLON:aa1}
	G_1\leq \int_{0}^{t}|X_s^{\bar{h}^{(\varepsilon,\delta)}}-X_s^{\bar{h}}| \gamma(|X_s^{\bar{h}^{(\varepsilon,\delta)}}-X_s^{\bar{h}}|\leq \int_{0}^{t} \rho_{\eta_1}(|X_s^{\bar{h}^{(\varepsilon,\delta)}}-X_s^{\bar{h}}|)\d s.
\end{equation}
Due to the assumption (\hyperlink{(H1)}{H1})  and \hold's inequality,
\begin{equation}\label{EQ:SKETLON:102}
	G_2 \leq \left[\int_{0}^{t}|X_s^{\bar{h}^{(\varepsilon,\delta)}}-X_s^{\bar{h}}|^2\cdot g_1(|X_s^{\bar{h}^{(\varepsilon,\delta)}}-X_s^{\bar{h}}|)\d s\right]^{1/2} \left[\int_{0}^{t}|\dot{\bar{h}}_1^{(\varepsilon,\delta)}(s)|^2\d s\right]^{1/2}.
\end{equation}
 By the linearity of $ \sigma_1(\cdot) $ and \hold's inequality, we have
\begin{equation}\label{EQ:SKETLON:103}
	 G_3\leq \left[\int_{0}^{t}(1+|X_s^{\bar{h}}|^2)\d s\right]^{1/2}\left[\int_{0}^{t}|\dot{\bar{h}}_1^{(\varepsilon,\delta)}(s)-\dot{{\bar{h}}}_1(s)|^2\d s\right]^{1/2}.
\end{equation}
Substituting \eqref{EQ:SKETLON:aa1}-\eqref{EQ:SKETLON:103} into  \eqref{EQ:SKETLON:100}, by Jensen's inequality and  Lemma \ref{THM02}-(2), we also have
  \[ 
 \limsup_{\varepsilon \to0 } \E_{\bar{\P}} [\sup_{0 \leq t \leq T,x\in D_N}|X_t^{\bar{h}^{(\varepsilon,\delta)}}(x)-X_t^{\bar{h}}(x)| ] =0.
   \]
 By   \eqref{EQ47}, we have
   \[ 
   \begin{split}
   	\d (\bar{X}_t^{(\varepsilon,\delta)}-X_t^{\bar{h}^{(\varepsilon,\delta)}})&=(f_1(\bar{X}_t^{(\varepsilon,\delta)},\bar{Y}_t^{(\varepsilon,\delta)})-\bar{f}_1(X_t^{\bar{h}^{(\varepsilon,\delta)}})) \d t+(\sigma_1(\bar{X}_t^{(\varepsilon,\delta)})-\sigma_1(X_t^{\bar{h}^{(\varepsilon,\delta)}}))\bar{h}_1^{(\varepsilon,\delta)}(t)\d t\\
   	& \quad +\sqrt{\varepsilon}\sigma_1(\bar{X}_t^{(\varepsilon,\delta)})\d \overline{W}_{t}^{1,(\varepsilon,\delta)}.
   \end{split}
    \]
 Then by the similar argument in the  proof of Lemma \ref{THM18} and the aforementioned convergence about skeleton process sequences $\{X^{\bar{h}^{(\varepsilon,\delta)}}\}$, we can deduce that for any $\iota>0$
 \[
 \lim_{\varepsilon\to 0}\bar{\P}[\sup_{0 \leq t \leq T,x\in D_N}|\bar{X}_t^{(\varepsilon,\delta)}-{X}_t^{\bar{h}}|>\iota] =0,\quad \varepsilon\to0.
 \]
%	By extracting a subsequence if necessary and then taking the limit as \( \varepsilon \rightarrow 0 \),
Hence, uniqueness of  the limit convergence implies the desired result.
\end{proof}
			
%\subsection{Proof of Theorem \ref{THM22}}
\subsection{Proof of Laplace principle}
This section mainly proves the Laplace principle, which is equivalent to the LDP,   by proving the upper and lower bounds of the Laplace principle, respectively. To this aim, we give the variational representation formula about the slow variable $X^{(\varepsilon,\delta)}$ firstly.

For any $ h=(h_1,h_2)\in \mathcal{A},$  let $(X^{(\varepsilon,\delta),h},Y^{(\varepsilon,\delta),h}) $ be the solution of the following controlled system,
	\begin{equation}\label{EQ20}
		\left\{\begin{aligned}
			\d {X}_{t}^{(\varepsilon, \delta),h}&=  f_{1}({X}_{t}^{(\varepsilon, \delta),h}, {Y}_{t}^{(\varepsilon, \delta),h}) \d t+\sigma_{1}({X}_{t}^{(\varepsilon, \delta),h}) \dot{h}_1(t) \d t+\sqrt{\varepsilon} \sigma_{1}({X}_{t}^{(\varepsilon, \delta),h}) \d W^1_{t} \\
			\delta \d {Y}_{t}^{(\varepsilon, \delta),h}&=  f_{2}({X}_{t}^{(\varepsilon, \delta),h}, {Y}_{t}^{(\varepsilon, \delta),h}) \d t+\sqrt{\frac{\delta}{\varepsilon}} \sigma_{2}({X}_{t}^{(\varepsilon, \delta),h}, {Y}_{t}^{(\varepsilon, \delta),h}) \dot{h}_2(t)\d t\\
   &\quad+\sqrt{\delta} \sigma_{2}({X}_{t}^{(\varepsilon, \delta),h}, {Y}_{t}^{(\varepsilon, \delta),h}) \d W^2_{t}.
		\end{aligned}\right.
	\end{equation}
	\begin{lemma}[Variational presentation formula]\label{THM08}
		The slow variable $ X^{(\varepsilon,\delta)}  $ of the system \eqref{EQ:SDE:01} satisfies the following variational formula, for any bounded continuous  \( F: \C_T \rightarrow \mathbb{R},\)
		\[
		-\varepsilon \log \mathbb{E}\left[\exp \left(-\frac{F(X^{(\varepsilon, \delta)})}{\varepsilon} \right)\right]=\inf _{h \in \mathcal{A}} \mathbb{E}\left[\frac{1}{2} \int_{0}^{T}|\dot{h}(s)|^{2} \d s+F(X^{(\varepsilon, \delta), h})\right],
		\]
	where $X^ {(\varepsilon,\delta), h} $ is the slow variable of the controlled system \eqref{EQ20}.
	\end{lemma}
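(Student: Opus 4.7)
The plan is to derive this variational formula as a direct consequence of the Budhiraja--Dupuis representation for exponential functionals of Brownian motion (cf.\ \cite{BUDHIRAJA2000}) combined with the strong solution structure of \eqref{EQ:SDE:01}. The key observation is that the slow component, being a pathwise-unique strong solution, is a measurable functional of the driving Brownian motion $W=(W^{1},W^{2})$, and that a Girsanov-type shift of $W$ by $h/\sqrt{\varepsilon}$ converts the uncontrolled system into the controlled one.

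First, I would record the strong-solution representation: there is a measurable map $\Phi^{(\varepsilon,\delta)}:C([0,T];\R^{2d})\to\C_{T}$ with $X^{(\varepsilon,\delta)}=\Phi^{(\varepsilon,\delta)}(W)$ a.s., as already noted in \eqref{EQ49}. Then I would apply the classical Budhiraja--Dupuis formula: for any bounded measurable $G:C([0,T];\R^{2d})\to\R$,
\[
-\log\E\left[\exp\bigl(-G(W)\bigr)\right]=\inf_{u\in\mathcal{A}}\E\left[\frac{1}{2}\int_{0}^{T}|u(s)|^{2}\d s+G\Bigl(W+\int_{0}^{\cdot}u(s)\d s\Bigr)\right].
\]
Taking $G(w):=F(\Phi^{(\varepsilon,\delta)}(w))/\varepsilon$ (bounded because $F$ is) and multiplying through by $\varepsilon$ gives
\[
-\varepsilon\log\E\left[\exp\left(-\frac{F(X^{(\varepsilon,\delta)})}{\varepsilon}\right)\right]=\inf_{u\in\mathcal{A}}\E\left[\frac{\varepsilon}{2}\int_{0}^{T}|u(s)|^{2}\d s+F\Bigl(\Phi^{(\varepsilon,\delta)}\bigl(W+\int_{0}^{\cdot}u(s)\d s\bigr)\Bigr)\right].
\]
The change of variable $u=\dot{h}/\sqrt{\varepsilon}$ is a bijection of $\mathcal{A}$ onto itself that satisfies $\varepsilon|u|^{2}=|\dot{h}|^{2}$ and $\int_{0}^{\cdot}u\,\d s=h/\sqrt{\varepsilon}$, and it converts the right-hand side into $\inf_{h\in\mathcal{A}}\E\bigl[\tfrac{1}{2}\int_{0}^{T}|\dot{h}(s)|^{2}\d s+F\bigl(\Phi^{(\varepsilon,\delta)}(W+h/\sqrt{\varepsilon})\bigr)\bigr]$.

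The main technical step is the identification $\Phi^{(\varepsilon,\delta)}(W+h/\sqrt{\varepsilon})=X^{(\varepsilon,\delta),h}$, where $X^{(\varepsilon,\delta),h}$ is the slow component of \eqref{EQ20}. Formally substituting $W^{i}\mapsto W^{i}+h_{i}/\sqrt{\varepsilon}$ into \eqref{EQ:SDE:01} produces the extra drift $\sqrt{\varepsilon}\sigma_{1}\cdot(\dot{h}_{1}/\sqrt{\varepsilon})\d t=\sigma_{1}\dot{h}_{1}\d t$ on the slow equation and $\sqrt{\delta}\sigma_{2}\cdot(\dot{h}_{2}/\sqrt{\varepsilon})\d t=\sqrt{\delta/\varepsilon}\,\sigma_{2}\dot{h}_{2}\d t$ on the fast equation, which is precisely the controlled system \eqref{EQ20}. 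Thus $(\Phi^{(\varepsilon,\delta)}(W+h/\sqrt{\varepsilon}),\Psi^{(\varepsilon,\delta)}(W+h/\sqrt{\varepsilon}))$ solves \eqref{EQ20}, and by strong uniqueness (Lemma \ref{THM120}) it coincides with $(X^{(\varepsilon,\delta),h},Y^{(\varepsilon,\delta),h})$ $\P$-a.s.

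The main obstacle is making this last identification rigorous for a general progressively measurable $h\in\mathcal{A}$ rather than, say, a smooth deterministic control: one has to verify that the Girsanov shift is well-defined (Novikov-type condition is trivially satisfied because $h\in\mathcal{A}$ has $\int_{0}^{T}|\dot{h}|^{2}\d s<\infty$ a.s.) and to invoke the strong uniqueness provided by Lemma \ref{THM120} in order to deduce pathwise equality rather than mere equality in law. Once this is settled, substituting the identification into the previous display yields the desired variational formula.
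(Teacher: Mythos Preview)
Your proposal is correct and follows essentially the same route as the paper: both apply the Bou\'e--Dupuis/Budhiraja--Dupuis variational representation to $G=F\circ\Phi^{(\varepsilon,\delta)}/\varepsilon$, rescale the control by $\sqrt{\varepsilon}$, and identify $\Phi^{(\varepsilon,\delta)}(W+h/\sqrt{\varepsilon})$ with the slow component of \eqref{EQ20} via strong uniqueness. The paper is somewhat terser on the identification step (it simply asserts $X^{(\varepsilon,\delta),\sqrt{\varepsilon}h}=\Phi^{(\varepsilon,\delta)}(W+h)$), while you spell out the Girsanov/strong-uniqueness justification more explicitly, but the argument is the same.
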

	\begin{proof}
	%	By the property of strong solutions, the slow variable $X^ {(\varepsilon, \delta)} $ of the original system \eqref{EQ:SDE:01} can be expressed as $\Phi_ 1 ^ {(\varepsilon, \delta)} (W), $
  %the relationship between the solution and the 
  By \eqref{EQ49},  the slow variable $X^{(\varepsilon, \delta), \sqrt{\varepsilon}h}$ of the controlled system \eqref{EQ20} can be expressed as  $\Phi ^ {(\varepsilon, \delta)} (W+h)$.
  It can be easily seen from the well-known variational presentation formula of Brownian motion (cf. Bou\'{e} and Dupuis \cite[Theorem 3.1]{DupuisAOP1998}) that
		\begin{equation}
			\begin{split}
				-\log \E\left[\exp(-g(X^{(\varepsilon,\delta)}))\right]  & =-\log \E\left[\exp(-g\circ \Phi^{(\varepsilon,\delta)}(W))\right]\\
				& = \inf_{h\in \mathcal{A}}\E\left[g\circ \Phi^{(\varepsilon,\delta)}(W+h)+\frac{1}{2}\|h\|^2_{\mathbb{H}}\right] \\
				& =  \inf_{h\in \mathcal{A}}\E\left[g(X^{(\varepsilon,\delta),\sqrt{\varepsilon}h})+\frac{1}{2}\|h\|^2_{\mathbb{H}}\right] .
			\end{split}
		\end{equation}
	In particular, we can take  $g (\cdot):=\frac{F (\cdot)} {\varepsilon}, $ then
	\begin{align*}
			-\varepsilon \log \mathbb{E}\left[\exp \left(-\frac{F(X^{(\varepsilon, \delta)})}{\varepsilon} \right)\right]& = -\varepsilon\log \E[\exp(-g(X^{(\varepsilon,\delta)}))]  \\
			& =  \varepsilon\inf_{h\in \mathcal{A}}\E\left[g(X^{(\varepsilon,\delta),\sqrt{\varepsilon}h})+\frac{1}{2}\|h\|^2_{\mathbb{H}}\right] \\
			& = 	\varepsilon\inf_{h\in \mathcal{A}}\E\left[\frac{1}{\varepsilon}F(X^{(\varepsilon,\delta),\sqrt{\varepsilon}h})+\frac{1}{2}\|h\|^2_{\mathbb{H}}\right] \\
			& = \inf_{h\in \mathcal{A}}\E\left[F(X^{(\varepsilon,\delta),\sqrt{\varepsilon}h})+\frac{\varepsilon}{2}\|h\|^2_{\mathbb{H}}\right] \\
			& = \inf_{h\in \mathcal{A}}\E\left[F(X^{(\varepsilon,\delta),\sqrt{\varepsilon}h})+\frac{1}{2}\|\sqrt{\varepsilon}h\|^2_{\mathbb{H}}\right]\\
			& = \inf_{h^{\prime}\in \mathcal{A}}\E\left[F(X^{(\varepsilon,\delta),h^{\prime}})+\frac{1}{2}\|h^{\prime}\|^2_{\mathbb{H}}\right] \\
			&= \inf _{h \in \mathcal{A}} \mathbb{E}\left[\frac{1}{2} \int_{0}^{T}|\dot{h}(s)|^{2} \d s+F(X^{(\varepsilon, \delta), h})\right] .
	\end{align*}
	The proof is complete.	
	\end{proof}
 Now, we can give the proof of main result.
 \begin{proof}[Proof of Theorem \ref{THM22}]
	(Lower bound). Let \(g\) be a real bounded continuous function on \( \mathbb{C}_{T} \). For $ X^{(\varepsilon,\delta)} $ in the origin slow-fast motion \eqref{EQ:SDE:01},
    by Lemma \ref{THM08}, we have
	\begin{equation}\label{EQ:VARITIONAL:01}
	    \begin{aligned}
		-\varepsilon \log \mathbb{E}\left[\exp \left(-\frac{g\left(X^{(\varepsilon,\delta)}\right)}{\varepsilon}\right)\right] & =\inf _{h \in \mathcal{A}} \mathbb{E}\left[g(X^{(\varepsilon,\delta), h})+\frac{1}{2}\|h\|_{\mathbb{H}}^{2}\right] .
	\end{aligned}
	\end{equation}
	Fixing \( \theta>0 \), by the definition of infimum, there is a  \( h^{(\varepsilon,\delta)}=(h_1^{(\varepsilon,\delta)},h_2^{(\varepsilon,\delta)}) \in \mathcal{A} \) such that
	\[
	\inf _{h \in \mathcal{A}} \mathbb{E}\left[g(X^{(\varepsilon,\delta), h})+\frac{1}{2}\|h\|^2_{\mathbb{H}}\right] \geq \mathbb{E}\left[g(X^{(\varepsilon,\delta), h^{(\varepsilon,\delta)}})+\frac{1}{2}\|h^{(\varepsilon,\delta)}\|_{\mathbb{H}}^{2}\right]-\theta,
	\]
	for every pair \( 0<\delta<\varepsilon<1 \). Since \( g \)  is bounded, we have
	\[
	\frac{1}{2} \sup _{\varepsilon>0} \mathbb{E}\left[\|h^{(\varepsilon,\delta)}\|_{\mathbb{H}}^{2}\right] \leq 2\|g\|_{\infty}+\theta.
	\]
	 Define
	\[
	\tau_{N}^{(\varepsilon,\delta)}:=\inf \left\{t \in[0, T]: \int_{0}^{t}|\dot{h}^{(\varepsilon,\delta)}(s)| ^2\d s \geq N\right\},
	\]
	and
	\[
	h_{N}^{(\varepsilon,\delta)}(t):=h^{(\varepsilon,\delta)}(t \wedge \tau_{N}^{(\varepsilon,\delta)}).
	\]
	Then \( h_{N}^{(\varepsilon,\delta)}(t) \in \mathcal{A}_{N} \) and
	\[
	\P(\|h_{N}^{(\varepsilon,\delta)}-h^{(\varepsilon,\delta)}\|_{\mathbb{H}} \neq 0)=\P(\tau_{N}^{(\varepsilon,\delta)}<T) \leq \cfrac{2\left(2\|g\|_{\infty}+\theta\right)}{N} .
	\]
	Therefore,
	\[
	\begin{split}
	    	\mathbb{E}&\left[g(X^{(\varepsilon,\delta), h^{(\varepsilon,\delta)}})+\frac{1}{2}\|h^{(\varepsilon,\delta)}\|_{\mathbb{H}}^{2}\right]-\theta \\
      & \geq \mathbb{E}\left[g(X^{(\varepsilon,\delta), h_{N}^{(\varepsilon,\delta)}})+\frac{1}{2}\|h_{N}^{(\varepsilon,\delta)}\|_{\mathbb{H}}^{2}\right]-\cfrac{2\|g\|_{\infty}(2\|g\|_{\infty}+\theta)}{N}-\theta .
	\end{split}
	\]
	For $h_N^{(\varepsilon,\delta)}$, applying Theorem \ref{THM10}, there exist a probability space \( (\bar{\Omega}, \bar{\mathscr{F}}, \bar{\P}) \) and a sequence (still indexed by $ \varepsilon $
	for simplicity) \( \{(\bar{h}_N^{(\varepsilon,\delta)}, \bar{X}_N^{(\varepsilon,\delta)}, \overline{W}^{(\varepsilon,\delta)})\} \) and \( (\bar{h}_N, \bar{X}_N^{\bar{h}_N}, \overline{W}) \) defined on this probability space which taking values in \( B_{N} \times \mathbb{C}_{T} \times \mathcal{W}_{T} \) such that (iii)-(a), (b) and (c) in Theorem \ref{THM10} hold. By Fatou's lemma, we have
	\begin{equation*}
		\begin{split}
			\liminf _{\varepsilon \rightarrow 0} \mathbb{E}\left[g(X^{(\varepsilon,\delta), h_{N}^{(\varepsilon,\delta)}})+\frac{1}{2}\|h_{N}^{(\varepsilon,\delta)}\|_{\mathbb{H}}^{2}\right]
			& =\liminf _{\varepsilon \rightarrow 0} \mathbb{E}_{\bar{\P}}\left[g(\bar{X}_N^{(\varepsilon,\delta)})+\frac{1}{2}\|\bar{h}_{N}^{(\varepsilon,\delta)}\|_{\mathbb{H}}^{2})\right]\\
			& \geq \mathbb{E}_{\bar{\P}}\left[g(S(\bar{h}_N))+\frac{1}{2}\|\bar{h}_N\|_{\mathbb{H}}^{2}\right] \\
			& \geq \inf _{\left\{(f, h) \in \mathbb{C}_{T} \times \mathbb{H}: f=S(h)\right\}}\left(g(f)+\frac{1}{2}\|h\|_{\mathbb{H}}^{2}\right) \\
			&\geq \inf _{f \in \mathbb{C}_{T}}(g(f)+I(f)) .
		\end{split}
	\end{equation*}
	Therefore,
	\[
	\begin{split}
		\liminf _{\varepsilon \rightarrow 0}-\varepsilon \log \mathbb{E}\left[\exp \left(-\frac{g\left(X^{(\varepsilon,\delta)}\right)}{\varepsilon}\right)\right]
		\geq \inf _{f \in \mathbb{C}_{T}}(g(f)+I(f))-\cfrac{2\|g\|_{\infty}\left(2\|g\|_{\infty}+\delta\right)}{N}-\theta.
	\end{split}
	\]
	Finally, letting \( N \rightarrow \infty \) and \( \theta \rightarrow 0 \) yields the lower bound, 
	\begin{equation}\label{EQ21}
		\begin{split}
			\liminf _{\varepsilon \rightarrow 0}-\varepsilon \log \mathbb{E}\left[\exp \left(-\frac{g\left(X^{(\varepsilon,\delta)}\right)}{\varepsilon}\right)\right] 
			\geq \inf _{f \in \mathbb{C}_{T}}(g(f)+I(f)).
		\end{split}
	\end{equation}
	
	(Upper bound). Fix \( \theta>0 \). Since \( g \) is a bounded function, there exists a   \( f_{0} \in \mathbb{C}_{T} \) such that
	\[
	g\left(f_{0}\right)+I\left(f_{0}\right) \leq \inf _{f \in \mathbb{C}_{T}}(g(f)+I(f))+\theta,
	\]
	and then choose  \( h_{0} \in \mathbb{H} \) such that
	\[
	\frac{1}{2}\left\|h_{0}\right\|_{\mathbb{H}}^{2}=I\left(f_{0}\right) , f_{0}=S\left(h_{0}\right) .
	\]
	Similarly, for $h_0$, applying  Theorem \ref{THM10} and Fatou's lemma,  we have 
 %the  variational representation of slow variable  and Lemma  \ref{THM02}, 
	\[
	\begin{split}
		\limsup _{\varepsilon \rightarrow 0}-\varepsilon \log \mathbb{E}\left[\exp \left(-\frac{g\left(X^{(\varepsilon,\delta)}\right)}{\varepsilon}\right)\right] 
		&=\limsup _{\varepsilon \rightarrow 0} \inf _{h \in \mathcal{A}} \mathbb{E}\left[g(X^{(\varepsilon,\delta), h})+\frac{1}{2}\|h\|_{\mathbb{H}}^{2}\right] \\
		&  \leq \limsup _{\varepsilon \rightarrow 0} \mathbb{E}\left[g(X^{(\varepsilon,\delta), h_{0}})+\frac{1}{2}\left\|h_{0}\right\|_{\mathbb{H}}^{2}\right] \\
		& \leq g\left(S\left(h_{0}\right)\right)+\frac{1}{2}\left\|h_{0}\right\|_{\mathbb{H}}^{2} \\
		& \leq \inf _{f \in \mathbb{C}_{T}}(g(f)+I(f))+\theta .
	\end{split}
	\]
where we  have used    \eqref{EQ:VARITIONAL:01} in the first equality. Letting \( \theta\rightarrow 0 \), we get the upper bound
	\begin{equation}\label{EQ22}
		\limsup _{\varepsilon \rightarrow 0}-\varepsilon \log \mathbb{E}\left[\exp \left(-\frac{g\left(X^{(\varepsilon,\delta)}\right)}{\varepsilon}\right)\right] \leq \inf _{f \in \mathbb{C}_{T}}(g(f)+I(f)).
	\end{equation}	
	
 Combine  \eqref{EQ21} with \eqref{EQ22}, we can deduce the Laplace principle,
	\[ 
	\lim _{\varepsilon \rightarrow 0}-\varepsilon \log \mathbb{E}\left[\exp \left(-\frac{g\left(X^{(\varepsilon,\delta)}\right)}{\varepsilon}\right)\right]   =\inf _{f \in \mathbb{C}_{T}}(g(f)+I(f)).
	\]
 The proof is complete.
	\end{proof}
%	有以下几种常用的bibliographystyle风格：
%	1. plain：（默认）按照作者的姓氏和出版年份的顺序进行引用，作者的姓名全称显示。
%	2. unsrt：不按照引用顺序排序，作者的姓名全称显示。
%	3. alpha：按照作者的姓氏的字母顺序进行引用，作者的姓名缩写显示。
%	4. abbrv：按照作者的姓氏的字母顺序进行引用，作者的姓名全称的姓氏缩写显示。
%	6. ieeetr：用于IEEE期刊的引用风格。
%	5. acm：用于ACM期刊的引用风格。
%\bibliographystyle{abbrv}
%显示doi号
\bibliographystyle{plain}
%\bibliographystyle{abbrv}
%\nocite{*}
\bibliography{reference_LDP}
\urlstyle{same}
	
\end{document}